\documentclass[11pt]{amsart}
\usepackage{geometry}
\geometry{a4paper,left=2.5cm,right=2.5cm}

\usepackage{mathtools}
\usepackage{amssymb,amsthm,amsmath}
\usepackage[numbers,sort&compress]{natbib}
\usepackage{color}
\usepackage{graphicx}
\usepackage{tikz}
\usetikzlibrary{arrows}
\usepackage[colorlinks,linkcolor=blue,anchorcolor=blue,citecolor=blue]{hyperref}
\usepackage{comment}

\usepackage{enumerate}
\usepackage{enumitem}
\date{}

\title[]{Quantitative reducibility of Gevrey quasi-periodic cocycles and its applications}

\author{Xianzhe Li}
\address{Chern Institute of Mathematics and LPMC, Nankai University, Tianjin 300071, China} 
\email{xianzheli@mail.nankai.edu.cn}


\newtheorem{theorem}{Theorem}[section]
\newtheorem{proposition}{Proposition}[section]
\newtheorem{lemma}{Lemma}[section]
\theoremstyle{Definition}
\newtheorem{remark}{Remark}[section]
\newtheorem{Definition}{Definition}
\newtheorem{claim}{Claim}

\newcommand{\dif}{\mathrm{d}}   

\numberwithin{equation}{section}

\newcommand{\N}{{\mathbb N}}
\newcommand{\Q}{{\mathbb Q}}
\newcommand{\R}{{\mathbb R}}
\newcommand{\C}{\mathbb{C}}
\newcommand{\T}{{\mathbb T}}

\newcommand{\Z}{{\mathbb Z}}
\makeatletter 

\newcommand{\Rmnum}[1]{\expandafter\@slowromancap\romannumeral #1@}
\makeatother


\newcommand{\diam}{\mathrm{diam}}

\begin{document}

\begin{abstract}
    We establish a quantitative version of strong almost reducibility result for $\mathrm{sl}(2,\R)$ quasi-periodic cocycle close to a constant in Gevrey class. We prove that, for the quasi-periodic Schr\"odinger operators with small Gevrey potentials, the length of spectral gaps decays sub-exponentially with respect to its labelling,  the long range duality operator has pure point spectrum with sub-exponentially decaying eigenfunctions for almost all phases and the spectrum is an interval for  discrete Schr\"odinger operator acting on $ \Z^d $ with  small separable  potentials.  All these results are based on a refined KAM scheme, and thus are perturbative.
\end{abstract}
\maketitle
\section{Introduction}
We consider the quasi-periodic Schr\"odinger operator on $ \ell^2(\Z^b) $:
\begin{equation}\label{1.1}
    H_{v,\alpha,\theta}=\Delta+v(\theta+n\alpha)\delta_{nn'}, 
\end{equation} 
where $ \Delta $ is the usual Laplacian on $ \Z^b $ lattice, $v\in C^r(\mathbb{T}^d,\R) $ ($ r=0,1,\cdots,\infty,\omega $) is the \textit{potential}, and $ \alpha\in \R^d $, $ \theta\in\R^d $ are called
the \textit{frequency}, the \textit{phase} respectively. We will always assume that $ (1,\alpha) $ is rationally independent, thus the spectrum of $ H_{v,\alpha,\theta} $, denoted by $ \Sigma_{v,\alpha} $, is a compact set of $ \R $ with no isolated points, and independent on $ \theta $. Operator (\ref{1.1}) has been studied extensively for its rich background in quantum physics and quasi-crystal, one may consult \cite{you2018quantitative,ge2020arithmetic} and the references therein for partial advances. In this paper, we will focus on the properties of its spectrum. 

\subsection{Quantitative almost reducibility} The reducibility theory (consult section \ref{sec2.2} for related concepts) has attracted  much attention due to its importance in connecting dynamical systems and the spectral theory. 
The pioneering work was due to Dinaburg-Sinai \cite{dinaburg1975one}. They first used classical KAM theory to obtain positive measure energies has reducibility for one-dimensional continuous quasi-periodic Schr\"odinger equation, provided small analytic potential.  Eliasson \cite{eliasson} made a  breakthrough by proving the reducibility for almost all energies and the almost reducibility for all energies provided the frequency is Diophantine and the potential is small and  analytic, using a crucial resonance-cancelation technique introduced by  Moser-P\"oschel \cite{moser-p}.
The non-perturbative reducibility, i.e., the smallness of the perturbations does not depend on the Diophantine constant, was established by Hou-You \cite{hou} (see also Avila-Fayad-Krikorian \cite{krikorian2011kam}).
 Recently, Leguil-You-Zhao-Zhou \cite{leguil2017asymptotics} obtained a \textit{strong} version of quantitative almost reducibility result, i.e., the strips of width not going to zero. Almost reducibility have many interesting spectral applications, curious readers are invited to consult \cite{you2018quantitative}.

In all the above almost reducibility results, the perturbations are all assumed to be analytic. In this paper, we extend these results to Gevrey class (consult section \ref{sec2.1}). Recall that
$\alpha\in\R^d$ is called \textit{Diophantine} if there exist $ \gamma^{-1}>1$, $\tau>d$, such that $\alpha\in\mathrm{DC}_d(\gamma,\tau)$, 
 where
\[
    \mathrm{DC}_d(\gamma,\tau):=\left\{x\in\R^d:\inf_{j\in\Z}|\langle n,x\rangle- j|\geq\frac{\gamma}{|n|^\tau}, 0\neq n\in\mathbb{Z}^d\right\}.
\]
Let $ \mathrm{DC}_d:=\bigcup_{\gamma^{-1}>1,\tau>d}\mathrm{DC}_d(\gamma,\tau) $, then $ \mathrm{DC}_d $ has full Lebesgue measure. Then we can state our main result:
\begin{theorem}\label{thm1.1}
	Given $ r_0>0 $. Assume $ \alpha \in \mathrm{DC}_d(\gamma,\tau)$, $ A_0\in\mathrm{SL}(2,\R) $,  $f_0\in  G^\nu_{r_0}(\mathbb{T}^d,\mathrm{sl(2,\R)})$, with $ 0<\nu<1 $. Then for any $ r\in(0,r_0) $, there exists $ \epsilon_* $ depending on $ \gamma,\tau,d,\nu,r,r_0,\lVert A_0\rVert $, such that if $ |f_0|_{r_0}<\epsilon_* $, then the system $ (\alpha,A_0e^{f_0}) $ is strong almost reducible with the ``width'' $r$.
\end{theorem}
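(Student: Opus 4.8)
The plan is to run a KAM iteration entirely inside the Gevrey scale $G^\nu_r$, modelled on the analytic strong almost reducibility scheme of Leguil--You--Zhao--Zhou, exploiting the fact that the Gevrey weight $e^{r|k|^\nu}$ behaves, for small--divisor and truncation estimates, essentially as well as the analytic weight $e^{r|k|}$ once every linear exponent is replaced by $|k|^\nu$. Here ``strong almost reducibility with width $r$'' means: for each $j\in\N$ there are $B_j\in G^\nu_r(\T^d,\mathrm{SL}(2,\R))$, $A_j\in\mathrm{SL}(2,\R)$ and $f_j\in G^\nu_r(\T^d,\mathrm{sl}(2,\R))$ with $B_j(\theta+\alpha)^{-1}A_0e^{f_0(\theta)}B_j(\theta)=A_je^{f_j(\theta)}$, $|f_j|_r\to0$, and $\|B_j\|_r$ growing at most sub-exponentially in $|f_j|_r^{-1}$ (say $\|B_j\|_r\le |f_j|_r^{-\sigma}$ for a fixed small $\sigma>0$). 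First I would record the Gevrey estimates used throughout (Banach-algebra property of $|\cdot|_r$, bounds for $\exp/\log$ and for composition, the truncation operator $\mathcal T_N$), most of which belong in the preliminaries; the two facts doing the real work are that the tail of a Gevrey series satisfies $|(\mathrm{Id}-\mathcal T_N)f|_{r-\delta}\le e^{-\delta N^\nu}|f|_r$, and that for $\alpha\in\mathrm{DC}_d(\gamma,\tau)$ the cohomological operator, restricted to the non-resonant mean-zero part, loses only $\delta$ in radius with operator norm $\le C(\tau,\nu)\gamma^{-1}\delta^{-\tau/\nu}$, because $\sup_{0\neq k}|k|^\tau e^{-\delta|k|^\nu}\le C(\tau,\nu)\delta^{-\tau/\nu}$.

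The core is a single KAM step. Given $(\alpha,Ae^{f})$ with $\|A\|\le D$ and $|f|_r\le\epsilon$ small (in terms of $\gamma,\tau,d,\nu,\delta,D$), one truncates $f$ at order $N$ and splits $\mathcal T_Nf$ according to whether $\langle k,\alpha\rangle$ is $\delta$-close to $\{0,\pm2\rho(A)\}\bmod\Z$; for the non-resonant part $g$ one solves the cohomological equation $A^{-1}Y(\cdot+\alpha)A-Y=g$ (with the $\mathrm{SL}(2,\R)$ structure, i.e. the invertibility of $X\mapsto e^{2\pi\mi\langle k,\alpha\rangle}A^{-1}XA-X$, handled as in Eliasson and Moser--P\"oschel), and conjugates by $e^Y$. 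The remaining error is $\lesssim\epsilon^2$ from the quadratic term plus $e^{-\delta N^\nu}\epsilon$ from the tail, so taking $N\sim(\delta^{-1}\log\epsilon^{-1})^{1/\nu}$ gives a new perturbation of size $\epsilon'\lesssim\epsilon^{3/2}$ together with $\|e^Y-\mathrm{Id}\|_{r-\delta}\lesssim\gamma^{-1}\delta^{-\tau/\nu}\epsilon^{1/2}$. If instead there is a (unique, for $N$ in the right range) resonant mode $k_*$ with $0<|k_*|\le N$, one first conjugates by a trigonometric-polynomial map $Q$ of degree $|k_*|$ — which lies in $G^\nu_{r-\delta}$ with $\|Q\|_{r-\delta}\lesssim e^{(r-\delta)|k_*|^\nu}$ — to put $A$ in a normal form whose new rotation number is $O(\epsilon)$, then runs the non-resonant step on the transformed system; the new constant part then satisfies $\|A'\|\lesssim D\,e^{c|k_*|^\nu}$, and crucially its rotation number is separated from the resonance set up to a scale $\gg|k_*|$, which is the Moser--P\"oschel gain that makes resonant steps sparse. (The degenerate cases $\rho(A_0)\in\tfrac12\langle\Z^d,\alpha\rangle+\Z$ and $A_0$ hyperbolic/parabolic are disposed of by an initial, possibly unbounded, conjugation.)

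For the iteration I would fix $r_0>r_1>\cdots\downarrow r$ with $\sum_{j\ge1}\delta_j=r_0-r$, taking $\delta_j\asymp(r_0-r)/j^2$, set $\epsilon_0=|f_0|_{r_0}$ and $\epsilon_{j+1}=\epsilon_j^{3/2}$, so $\epsilon_j\le\epsilon_0^{(3/2)^j}$ decays super-exponentially; the truncation orders $N_j\sim(\delta_j^{-1}\log\epsilon_j^{-1})^{1/\nu}$ then grow like $((3/2)^j j^2)^{1/\nu}$, hence at a resonant step $|k_*|^\nu\le N_j^\nu=O((3/2)^j j^2)$ and $\|B_j\|_{r_j}\lesssim e^{c(3/2)^j j^2}=\epsilon_j^{-c'}$ up to lower-order factors, while at a non-resonant step $\|B_j-\mathrm{Id}\|_{r_j}\lesssim\gamma^{-1}\delta_j^{-\tau/\nu}\epsilon_j^{1/2}$ is summable and tends to $0$. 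Multiplying the per-step bounds gives $\|B_0\cdots B_{j-1}\|_r\le\epsilon_j^{-\sigma}$ for a fixed small $\sigma$ once $\epsilon_*$ is small enough, while $|f_j|_r\le\epsilon_j\to0$; this is the claimed strong almost reducibility with width $r$. The step I expect to be the genuine obstacle is the quantitative control of the resonant sub-steps: one must show that after absorbing the resonance at $k_*^{(i)}$ the cocycle has no further resonance below a scale $\gg|k_*^{(i)}|$ (so consecutive resonant modes grow at least geometrically and $\prod_i e^{c|k_*^{(i)}|^\nu}$ stays sub-exponential in $\epsilon_j^{-1}$), and simultaneously that the constant parts $A_j$ stay in a fixed compact set up to the permitted growth — the Eliasson/Moser--P\"oschel resonance-cancellation analysis. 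Transplanting it to the Gevrey weight, where every ``$|k|$'' becomes ``$|k|^\nu$'' and the resonance windows are correspondingly tighter, while keeping every radius loss summable to the prescribed budget $r_0-r$, is where the bulk of the technical work — and the use of the hypothesis $r<r_0$ — lies.
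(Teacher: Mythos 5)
Your overall architecture (KAM iteration at a fixed limiting width, truncation at $N\sim((r_j-r_{j+1})^{-1}\log\epsilon_j^{-1})^{1/\nu}$, a non-resonant elimination, a degree-$k_*$ rotation at resonances, and Gevrey tails $e^{-\delta N^\nu}$) is the same as the paper's; the paper eliminates the non-resonant part by a nonlinear implicit-function argument on the non-resonant subspace (Hou--You style, Lemma \ref{lem3.1}, no width loss) rather than by your linearized cohomological equation, but that difference is harmless. The genuine gap is exactly the piece you defer as ``the genuine obstacle'': the quantitative content of the resonant step, and without it your bookkeeping does not close. Two points. First, after conjugating by the rotation of degree $k_*$ the new constant part is \emph{not} of size $\lVert A'\rVert\lesssim D e^{c|k_*|^\nu}$; it is $e^{a_+}$ with $\lVert a_+\rVert\le 4\epsilon^{\sigma}$ (close to the identity), precisely because $2\xi$ is $\epsilon^{\sigma}$-close to $\langle k_*,\alpha\rangle$ and the modes $0$ and $k_*$ are absorbed into the constant. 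If the constant really grew like $e^{c|k_*|^\nu}$, the smallness threshold $\epsilon\le c\lVert A\rVert^{-D}(r_j-r_{j+1})^{D\nu\tau}$ for the next step would be violated and the induction would stop. Second, and more importantly, a resonant step does not merely give $\epsilon'\lesssim\epsilon^{3/2}$: by a Diophantine separation argument (Claim \ref{claim}) the resonant space contains, besides $0$ and $k_*$, no Fourier mode of modulus below $N_2\sim\epsilon^{-\sigma/\tau}\gg N$, so after absorbing those two modes the remaining error is stretched-exponentially small, $|f_+|_{r_+}\lesssim \epsilon^{3/4-\cdot}e^{-(r-r_+)\epsilon^{-c\nu\sigma/\tau}}$ (Proposition \ref{prop} II). This gain is what compensates the conjugation norm $e^{r|k_*|^\nu}$ and gives \emph{strong} almost reducibility at a fixed width.

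With only the $\epsilon_{j+1}=\epsilon_j^{3/2}$ gain your own numbers are inconsistent: at a resonant step at stage $j$ you have $|k_*|^\nu\le N_j^\nu\sim j^2(3/2)^j\log\epsilon_0^{-1}/(r_0-r)$, so $e^{c|k_*|^\nu}=\epsilon_j^{-c'j^2}$, not $\epsilon_j^{-c'}$ ``up to lower-order factors''; then $\lVert B_{j+1}\rVert_r\le |f_{j+1}|_r^{-\sigma}$ would force $c'j^2\le\tfrac32\sigma$, false for large $j$. Resonance sparsity (consecutive resonant modes growing geometrically) does not rescue this, since the last resonant mode alone already dominates; and one cannot in general hope for only finitely many resonant steps, as that holds only when the rotation number is rational or Diophantine with respect to $\alpha$ (parts (ii)--(iii) of Theorem \ref{thm4.1}), not for all energies. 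So to complete your argument you must prove the resonant-step normal form with the bounded constant part and the $e^{-(r-r_+)\epsilon^{-c}}$ error estimate — which is the actual technical core of the paper's proof — rather than assume a uniform $\epsilon\mapsto\epsilon^{3/2}$ scheme plus sparsity.
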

More detailed quantitative estimates will be given in section 4. As a comparison, we mention that though the strong almost reducibility for Gevrey class have been proved by Chavaudret \cite{chavaudret2013strong} in continuous analogy. We remark that our result provides more quantitative estimates, which plays a key  role in the following applications to quasi-periodic Schr\"odinger operators with Gevrey potentials.

\subsection{Estimates on spectral gaps}Recall that the bounded connected component of $ \R\backslash \Sigma_{v,\alpha} $ is called the \textit{spectral gap}, from the mathematical perspective, gap estimates is a core problem in the study of the spectral theory of quasi-periodic Schr\"odinger operators, since it can reflect the topological structure of the spectrum. 

For $ b=1 $, the famous Gap-Labelling theorem \cite{gaplabel} states that for any spectral gap $G$, there exists a unique $k\in \Z^d$ such that $2\rho \equiv\langle k, \alpha\rangle \mod \Z$, where $ \rho $ is its fibered rotation number (consult section \ref{sec2.2}). That is, all the spectral gaps can be labelled by integer vectors: we
denote by $G_k(v ) = (E^{-}_k , E^{+}_k )$ the gap with label $k\neq 0$.
When $E^{-}_k=E^+_k$, we
say the gap is collapsed.
We also set $E_0^{-}:= \inf \Sigma_{v,\alpha},E_0^+ := \sup \Sigma_{v,\alpha},$ and 
$G_0(v ) := (-\infty, E_0^{-}) \cup(E_0^+,\infty)$.	
The consideration of the estimates of gaps, may go back to a question that whether the spectrum is a Cantor set for the almost Mathieu operator (AMO), for which $ v(\cdot)=2\lambda\cos2\pi(\cdot) $, $ \lambda\neq 1 $ and $ \alpha\in\R\backslash\Q $. This is so-called \textit{Ten Martini Problem} stated by Barry Simon after an offer of Mark Kac in 1981 \cite{simon2000schrodinger}. After works of many mathematicians, this problem was finally solved by Avila-Jitomirskaya \cite{avila2009ten}. The \textit{Dry Ten Martini Problem} further conjectures that all gaps $ G_k $ labeled by the Gap-Labelling theorem are open, for AMO case. This problem was partially resolved in \cite{puig,avila2009ten,liu2015spectral,avila2016dry}. Moreover, Leguil-You-Zhao-Zhou \cite{leguil2017asymptotics} obtained the exponential asymptotic behavior on the gaps of noncritical AMO recently, which gives a quantitative estimate on their size. 

 It was conjectured that 
 one-dimensional quasi-periodic Schr\"odinger operator generically has Cantor spectrum \cite{simon1982almost}. For partial advances we refer to \cite{wang2019genericity}.
Then people are also curious about how large are the gaps. There are many works along this line:  For  analytic small potentials, Moser-P\"oschel \cite{moser-p} proved that  $ G_k(v) $ is exponentially small with respect to $ |k| $ provided that $ |k| $ large enough and $ \langle k,\alpha\rangle $ not too close to other $ \langle m,\alpha\rangle $; Amor \cite{amor} proved the all gaps $ G_k(v) $ are  sub-exponentially small with respect to $ |k| $  for small analytic potentials and Diophantine $\alpha$; Damanik-Goldstein \cite{damanik2014inverse} proved the  decay rate  $ |G_k(v)|\leq 2\lVert v\rVert_{r_0}e^{-\pi r_0|k|} $ for sufficiently small $ v\in C^\omega_{r_0}(\T^d,\R) $. Later on, Leguil-You-Zhao-Zhou \cite{leguil2017asymptotics} improved the decay rate as $ |G_k(v)|\leq \lVert v\rVert_{r_0}^{\frac{2}{3}}e^{-2\pi r|k|} $,  where  $ r\in(0,r_0) $ can be arbitrary close to $ r_0 $. For weakly coupled quasi-periodic Schr\"odinger operators with partial Liouville frequencies, see also the result by Liu-Shi \cite{liu2019upper}, the size of the spectral gaps decays exponentially. 

If the regularity of $ v $ is weaker than analytic, one can not expect an exponential decay rate \cite{damanik2014inverse}. 
In finite differentiable regime, Cai-Wang \cite{cai2021polynomial} showed that the gap lengths decay  polynomially. In this paper,  we improve this result to sub-exponential decay for small Gevrey potentials.
\begin{theorem}\label{thm1.2}
    Given $ r_0>0 $. Assume that $\alpha\in\mathrm{DC}_d(\gamma,\tau)$, $v\in  G^{\nu}_{r_0}(\mathbb{T}^d,\R)$ with $0<\nu<1$.
    For any $r\in(0,r_0)$, there exists $\epsilon_*=\epsilon_*(\gamma,\tau,d,\nu,r_0,r)>0$ such that if $|v|_{r_0}=\epsilon_0<\epsilon_*$, then
    for any $k\in\mathbb{Z}^d\backslash\{0\}$, the $k^{\mathrm{th}}$ gap of $H_{v,\alpha,\theta}$, denoted by $G_k(v)$, has the following estimate\[|G_k(v)|\leq\epsilon_0^{\frac{1}{2}}e^{- r|2\pi k|^{\nu}}.\] 
\end{theorem}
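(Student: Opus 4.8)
The plan is to deduce the gap estimate from the quantitative almost reducibility of Section~4 applied to the Schr\"odinger cocycle, following the strategy of Leguil-You-Zhao-Zhou \cite{leguil2017asymptotics} but carried out entirely in the Gevrey class. Fix $k\in\Z^d\setminus\{0\}$; if $G_k(v)$ is collapsed there is nothing to prove, so suppose it is a nondegenerate interval and let $E$ range over a small real neighborhood $U$ of $\overline{G_k(v)}$. The transfer cocycle of $H_{v,\alpha,\theta}$ at energy $E$ is $(\alpha,S_E^v)$ with $S_E^v(\theta)=\begin{pmatrix} E-v(\theta) & -1\\ 1 & 0\end{pmatrix}$, which lies in $G^\nu_{r_0}(\T^d,\mathrm{SL}(2,\R))$ and is $\epsilon_0$-close to the constant cocycle $(\alpha,S_E^0)$. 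By the Gap-Labelling theorem the fibered rotation number satisfies $2\rho(E)\equiv\langle k,\alpha\rangle\pmod{\Z}$ for $E\in\overline{G_k(v)}$. Applying Theorem~\ref{thm1.1} in the quantitative form of Section~4, with target width $r$, one obtains for each $n$ a Gevrey conjugacy $\Phi_n(E)\in G^\nu_{r_n}(\T^d,\mathrm{PSL}(2,\R))$, depending nicely on $E\in U$, with $r_n\downarrow r$, reducing $(\alpha,S_E^v)$ to $(\alpha,A_n(E)e^{f_n(\cdot,E)})$ where $A_n(E)\in\mathrm{SL}(2,\R)$, $|f_n(\cdot,E)|_{r_n}\le\epsilon_n$ with $\epsilon_n$ decreasing to $0$, and where the \emph{strong} quantitative bound $\|\Phi_n\|_0\lesssim\epsilon_0^{-1/4}$ holds over the range of $n$ that will be used.

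Second, I would take $n=n(k)$ to be the KAM step at which the resonance at the frequency $k$ is processed --- the first $n$ whose Gevrey truncation scale $N_n$, which in this class grows like $|\ln\epsilon_n|^{1/\nu}$ rather than like $|\ln\epsilon_n|$ as in the analytic case, exceeds $|k|$. Since the rotation number is a conjugacy invariant modulo the lattice and $2\rho(E)\equiv\langle k,\alpha\rangle$, the eigenvalues of $A_n(E)$ are $e^{\pm 2\pi\mi\,\xi_n(E)}$ with $2\xi_n(E)$ close to $\langle k,\alpha\rangle$ in $\R/\Z$; that is, mode $k$ is resonant. I would then run a resonance-cancellation step in the spirit of Moser-P\"oschel: conjugating $A_n e^{f_n}$ by a map built from $e^{2\pi\mi\langle k,\theta\rangle}$ absorbs the resonant Fourier mode $\widehat{f_n}(k)\,e^{2\pi\mi\langle k,\theta\rangle}$ into the constant part, producing $(\alpha,\widetilde A_n(E)e^{\widetilde f_n(\cdot,E)})$ in which $\widetilde A_n(E)$ carries the obstruction to reducibility while $|\widetilde f_n(\cdot,E)|_{r'}$ is negligibly small. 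For $E\in U$ the original cocycle is uniformly hyperbolic exactly on $U\cap(\R\setminus\Sigma_{v,\alpha})=G_k(v)$; comparing with the constant $\widetilde A_n(E)$ --- a valid comparison since $\widetilde f_n$ is tiny and, for small potentials, the integrated density of states is non-degenerate, so the map $E\mapsto\widetilde A_n(E)$ moves at a controlled speed --- yields
\[
|G_k(v)|\ \lesssim\ \|\Phi_n\|_0^{2}\,\bigl|\widehat{f_n}(k)\bigr|,
\]
the factor $\|\Phi_n\|_0^{2}$ recording the distortion of the energy window under the global conjugacy.

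Third, I would estimate $\bigl|\widehat{f_n}(k)\bigr|$. Mode $k$ is non-resonant --- indeed invisible below the truncation --- at every step before $n(k)$, so it is merely carried along from $f_0$, where it already satisfies $\bigl|\widehat{f_0}(k)\bigr|\le|f_0|_{r_0}e^{-r_0|2\pi k|^{\nu}}\le\epsilon_0\, e^{-r_0|2\pi k|^{\nu}}$ by the definition of the Gevrey norm (Section~2.1), and the quadratic corrections accumulated over the finitely many intervening steps only add $O(\epsilon_0^{2}e^{-r|2\pi k|^{\nu}})$; hence $\bigl|\widehat{f_n}(k)\bigr|\lesssim\epsilon_0\, e^{-r|2\pi k|^{\nu}}$, using $r_n\ge r$ throughout. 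Combining with $\|\Phi_n\|_0^{2}\lesssim\epsilon_0^{-1/2}$ gives $|G_k(v)|\lesssim\epsilon_0^{1/2}e^{-r|2\pi k|^{\nu}}$, which, after absorbing constants and treating the finitely many $k$ with $|k|$ below the first truncation scale by the crude bound $|G_k(v)|\le 2\|v\|_{L^\infty}\le 2\epsilon_0\le\epsilon_0^{1/2}e^{-r|2\pi k|^{\nu}}$ (valid once $\epsilon_*$ is small enough in terms of $r$), is the desired estimate.

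The main obstacle is the bookkeeping that makes the second and third steps compatible: one must run the almost-reducibility iteration with target width exactly $r$ so that simultaneously (i) the cumulative loss of Gevrey width is $r_0-r$ and the weight $e^{-r|2\pi k|^{\nu}}$ genuinely survives down to the step $n(k)$ that treats the resonance; (ii) the conjugacies grow at most like a fixed small power of $\epsilon_0^{-1}$ --- precisely the strong refinement that Theorem~\ref{thm1.1} is engineered to deliver, and without which one would only recover a polynomial gap bound as in Cai-Wang \cite{cai2021polynomial}; and (iii) the Gevrey truncation scales $N_n\sim|\ln\epsilon_n|^{1/\nu}$ still locate $|k|$ at a step whose error and conjugacy are under control. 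Reconciling (i)--(iii) --- in particular propagating the Gevrey Fourier weight through the KAM scheme, where Gevrey regularity behaves noticeably worse than analyticity under truncation --- is where the Gevrey setting is genuinely harder than the analytic case of Leguil-You-Zhao-Zhou, and is what is responsible both for the exponent $\tfrac12$ and for the restriction $r<r_0$.
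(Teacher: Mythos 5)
Your overall frame (quantitative almost reducibility at the gap edge plus a Moser--P\"oschel type analysis, with the Gevrey weight $e^{-r|2\pi k|^\nu}$ surviving to the resonant step) is the right one, but the step that actually produces the gap bound is missing. The inequality $|G_k(v)|\lesssim\|\Phi_n\|_0^{2}\,|\widehat{f_n}(k)|$ is asserted, not proved, and the justification offered --- ``the integrated density of states is non-degenerate, so $E\mapsto\widetilde A_n(E)$ moves at a controlled speed'' --- is not available and points in the wrong direction. There is no non-degeneracy statement for the IDS in this setting (Theorem \ref{Caiao} gives only an \emph{upper} modulus of continuity), and what one needs to bound $|G_k(v)|$ from above is a \emph{lower} bound on how far the rotation number moves when the energy is pushed a distance $\delta$ past the gap edge: one must exhibit a concrete $\delta_1$ for which $E_k^+-\delta_1$ is provably outside the gap. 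In the paper this is exactly the content of Section 6: reduce at the single energy $E_k^+$ (rational rotation number, Theorem \ref{thm4.1}(ii)) to the parabolic constant $B=\begin{pmatrix}1&c\\0&1\end{pmatrix}$ with $c\le\epsilon_0^{3/5}e^{-\frac{r}{1-\chi}|2\pi k|^\nu}$ by (\ref{3.15}); use the \emph{same} ($E$-independent) conjugacy $Z$ at $E_k^+-\delta$, giving $B-\delta P$ with $P$ explicit in the entries of $Z$; average once (Lemma \ref{lem6.1}); and then prove, via the non-degeneracy inequalities of Lemma \ref{lem6.3} (a lower bound on $[z_{11}^2][z_{12}^2]-[z_{11}z_{12}]^2$ and control of $[z_{11}^2]/([z_{11}^2][z_{12}^2]-[z_{11}z_{12}]^2)$), that $d(\delta_1)=\det(b_0-\delta_1 b_1)\ge\frac94c^2$ for $\delta_1=c^{1-\chi}$, so the rotation number at $E_k^+-\delta_1$ is strictly positive and Gap-Labelling forces $|G_k(v)|\le\delta_1\le\epsilon_0^{1/2}e^{-r|2\pi k|^\nu}$. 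This second-order, lower-bound analysis of the averaged matrix $b_1$ is the heart of the proof and has no counterpart in your proposal; without it the comparison between the perturbed cocycle and the constant $\widetilde A_n(E)$ cannot yield any upper bound on the gap length.

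Two further points, secondary but worth fixing. First, the conjugacies produced by the KAM scheme do not depend ``nicely'' on $E$ across resonant steps (which resonance occurs, and when, changes with $E$); the paper sidesteps this entirely by reducing only at $E=E_k^+$ and treating the energy shift as a perturbation $-\delta P(\theta)$ of the reduced constant cocycle. Second, Fourier modes are not ``merely carried along'' by the iteration --- each conjugation by $e^{Y}$ redistributes Fourier mass --- although the bound you want, $|\widehat{f_n}(k)|\le\epsilon_n e^{-r_n|2\pi k|^\nu}$, does follow trivially from the definition of the Gevrey norm; similarly, the conjugacy bound supplied by Theorem \ref{thm4.1}(ii) is $|\widetilde Z|_{r''}\le D_1|k|^\tau e^{2r''(\pi|k|)^\nu}$, not a uniform $\epsilon_0^{-1/4}$, and the polynomial factor in $|k|$ has to be absorbed by a slight loss in the exponential rate (this is precisely the role of the parameters $\chi$ and $R$ in Section 6).
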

One should not expect all gaps are open for general quasi-periodic Schr\"odinger operators \cite{goldstein2019spectrum}, thus different from the AMO case, in general it is impossible to give any non-trivial lower bound of gap length.
 
\subsection{Long range operator}
The  Aubry duality of one dimensional  Schr\"odinger operators $ H_{\lambda^{-1} v,\alpha,\theta} $ are long-range operators on $ \Z^d $
\begin{equation} \label{longrange}
    (L_{\lambda^{-1} v,\alpha,\phi}u)_n=\sum_{k\in\Z^d}\hat{v}(n-k)u_k+2\lambda\cos 2\pi(\phi+\langle n,\alpha\rangle)u_n, \phi\in\T,
\end{equation} 
then we have 
\begin{theorem}\label{thmlong}
    If $ \alpha\in \mathrm{DC}_d$, $ v\in G_{r_0}^\nu(\T^d,\R) $, then for any $ r\in(0,r_0) $, there exists $ \lambda_0(\alpha,d,\nu,r_0,r) $, such that if $ \lambda>\lambda_0 $, $ L_{\lambda^{-1} v,\alpha,\phi} $ has pure point spectrum with sub-exponentially decaying eigenfunctions for a.e. $ \phi\in \T $.
\end{theorem}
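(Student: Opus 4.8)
The plan is to combine the Gevrey quantitative almost reducibility of Theorem~\ref{thm1.1} with the Aubry duality that links the Schr\"odinger family $H_{\lambda^{-1}v,\alpha,\theta}$ on $\ell^2(\Z)$ to the long range family $L_{\lambda^{-1}v,\alpha,\phi}$ on $\ell^2(\Z^d)$. Fix $r\in(0,r_0)$ and pick $r'\in(r,r_0)$. Since the spectra $\Sigma_{\lambda^{-1}v,\alpha}$ stay inside a fixed bounded interval for all $\lambda\geq 1$, the Schr\"odinger cocycles $(\alpha,S_E^{\lambda^{-1}v})$ with $E\in\Sigma_{\lambda^{-1}v,\alpha}$ are, once $\lambda>\lambda_0$, perturbations of constants of size $\epsilon_0:=|f_0|_{r_0}=O(\lambda^{-1})<\epsilon_*$; thus Theorem~\ref{thm1.1} together with the refined estimates of Section~4 applies uniformly in $E$, yielding strong almost reducibility with width $r'$. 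From the KAM iteration I would then extract the usual \emph{full measure reducibility}: outside a zero-measure set of energies the sequence of resonances terminates, the fibered rotation number $\rho(E)$ is Diophantine relative to $\alpha$, and the cocycle is genuinely reducible, i.e.\ there is $B_E\in G^\nu_{r}(\T^d,\mathrm{PSL}(2,\R))$ with $B_E(\theta+\alpha)^{-1}S_E^{\lambda^{-1}v}(\theta)B_E(\theta)=R_{\rho(E)}$ and a quantitative bound of the form $|B_E|_r\leq C\epsilon_0^{-C}$.

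Next I would carry out the duality. The columns of $B_E$ furnish a Gevrey quasi-periodic Bloch wave, i.e.\ a solution of $H_{\lambda^{-1}v,\alpha,\theta}\psi=E\psi$ of the form $\psi_n=e^{2\pi\mi n\rho(E)}\,\overline{z(\theta+n\alpha)}$ with $z\in G^\nu_{r}(\T^d,\C)$. Passing to Fourier coefficients, and using $|\hat z(k)|\leq |z|_r\,e^{-r|2\pi k|^\nu}$ together with the analogous sub-exponential bound on $\hat v$ (which is what makes $L_{\lambda^{-1}v,\alpha,\phi}$ a well-defined long range operator in the first place), the Aubry--Andr\'e transform converts $(\hat z(k))_{k\in\Z^d}$ into an $\ell^2$ eigenfunction $u^{E}$ of $L_{\lambda^{-1}v,\alpha,\phi}$ at the phase $\phi=\rho(E)$ (modulo the $\Z\alpha+\Z$ ambiguity of the gap labelling), satisfying $|u^{E}_k|\leq C\,e^{-r|2\pi k|^\nu}$. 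So each Diophantine value of the rotation number produces one sub-exponentially localized eigenfunction of the dual operator, with decay rate uniform in $E$.

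The step I expect to be the main obstacle is upgrading ``one eigenfunction for suitable phases'' to ``pure point spectrum for almost every phase''. Here I would follow the measure-theoretic argument of Bourgain--Jitomirskaya, by now standard in the analytic category: Aubry duality exhibits $\{H_{\lambda^{-1}v,\alpha,\theta}\}_{\theta\in\T^d}$ and $\{L_{\lambda^{-1}v,\alpha,\phi}\}_{\phi\in\T}$ as two direct-integral realizations of one and the same operator, under which the integrated density of states of $L$ is an affine image of that of $H$; hence for a.e.\ $\phi$ the spectral measure of $L_{\lambda^{-1}v,\alpha,\phi}$ is carried by the set of energies at which the Schr\"odinger cocycle is reducible. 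Combining this with Schnol's theorem (a.e.\ energy with respect to the spectral measure is a generalized eigenvalue) and with the fact that a polynomially bounded generalized eigenfunction of $L_\phi$ at a reducible energy is forced, through the duality, to be the dual of a genuine Bloch wave and therefore to lie in $\ell^2$ with sub-exponential decay, one concludes that $L_{\lambda^{-1}v,\alpha,\phi}$ is pure point with sub-exponentially decaying eigenfunctions for a.e.\ $\phi$. The delicate points, for which the quantitative estimates of Section~4 are tailored, are: (i) showing that the exceptional energy set is null for the density of states and that the associated set of phases is Lebesgue-null, which is a resonance-counting estimate; (ii) ruling out continuous spectrum, i.e.\ completeness of the localized eigenfunctions for a.e.\ $\phi$; and (iii) ensuring that every Gevrey norm entering the duality transform converges, which is exactly where the sub-exponential, rather than merely polynomial, decay of $\hat z$ and $\hat v$ is needed.
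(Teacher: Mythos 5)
Your first two steps are exactly the paper's: for energies with $\rho(\alpha,S_E^{\lambda^{-1}v})=\phi+\langle m,\alpha\rangle$ Diophantine with respect to $\alpha$, Theorem~\ref{thm4.1}(iii) reduces the cocycle in $G^\nu_r(2\T^d,\mathrm{SL}(2,\R))$, and the Fourier coefficients of a twist of the first column of the conjugation give a normalized eigenfunction of $L_{\lambda^{-1}v,\alpha,\phi}$ decaying like $e^{-r|2\pi n|^\nu}$. You diverge at the completeness step. The paper never invokes Schnol or the direct-integral picture: it uses the quantitative criterion borrowed from \cite{avila2017sharp} (the Proposition opening Section 5), namely that pure point spectrum for a.e.\ $\phi$ follows once at least $(1-\delta)(2N)^d$ normalized $(\nu,N,C,\varepsilon)$-good eigenfunctions are exhibited for all large $N$; these are indexed by the $m$ with $|m|\le N(1-\delta/3)$ and $\phi+\langle m,\alpha\rangle\in\Theta_{\gamma'}$, counted via the ergodic theorem, and the whole point of the bounds $C_1(\gamma'),C_2(\gamma')$ in Theorem~\ref{thm4.1}(iii) is that they are uniform over all such $m$, so each eigenfunction is good with $N$-independent constants. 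Your route (spectral measures carried, for a.e.\ $\phi$, by reducible energies; Schnol; then ``a polynomially bounded generalized eigenfunction at a reducible energy dualizes to a Bloch wave'') is a legitimate alternative --- it is the localization-via-reducibility scheme in the spirit of Ge--You rather than Bourgain--Jitomirskaya --- and it buys qualitativeness: no counting and no uniformity in $E$ are needed, only reducibility at almost every energy with respect to the density of states (which holds because $\rho$ pushes the density of states onto Lebesgue measure, not by a ``resonance count''). What it costs is that your quoted key step is the actual content and must be proved: a polynomially bounded solution only defines a distribution $g$ on $\T^d$, and one has to show, from $B(\cdot+\alpha)^{-1}S_E B(\cdot)=\mathrm{const}$ and the rational independence of $(1,\alpha)$, that $B^{-1}$ applied to the distributional Bloch vector has Fourier support in at most two frequencies, hence $g\in G^\nu_r$ and the eigenfunction indeed decays sub-exponentially. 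Also note that your posited uniform bound $|B_E|_r\le C\epsilon_0^{-C}$ is not what the KAM scheme provides --- the bound is $\Gamma_1(\gamma')$, depending on the Diophantine constant of $\rho(E)$ --- which is harmless for your qualitative route but would not suffice for the paper's counting argument.
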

If $ d=1 $, Bourgain-Jitomirskaya \cite{bourgain2002absolutely} showed that for any fixed Diophantine $ \alpha $, $ L_{\lambda^{-1} v,\alpha,\theta} $ has Anderson localization for sufficiently large $ \lambda $ and a.e. $ \theta $. This result is non-perturbative in the sense that the largeness of $ \lambda $ is independent of $ \alpha $. Recently, Ge-You-Zhou \cite{ge2019exponential} showed the same holds for $ d\geq2 $, but is a perturbative result, in the sense that the largeness of $ \lambda $ depends on $ \alpha $. Theorem \ref{thmlong} extends their result to Gevrey class. As a comparison, we mention that Shi \cite{shi2019spectral} proved that for multi-frequency quasi-periodic operator with Gevrey type perturbations, that is, 
$$
(L_{\lambda, v,f,\alpha,\phi}u)_n=\sum_{k\in\Z^d}\hat{v}(n-k)u_k+\lambda f(\phi+\langle n,\alpha\rangle)u_n, \phi\in\T^d,
$$ 
pure point spectrum with sub-exponentially decaying eigenfunctions holds for fixed $ \phi\in\T^d $, sufficiently large $ \lambda $ and a positive measure $ \alpha $-set, by using the techniques of LDT and Green's function estimates. While Theorem \ref{thmlong} is for fixed $ \alpha $, if $ \lambda $ sufficiently large (depends on $ \alpha $), we have pure point spectrum with sub-exponentially decaying eigenfunctions holds for a.e $ \phi $, we emphasize that the decay rate can be arbitrary close to $ r_0 $.
\subsection{Interval spectrum}In the study of spectral theory, an interesting question to ask is how complex the spectrum is.  If we consider the topological structure of a compact set, there are two extremes, one is the interval, one is the Cantor set. Interval is simple, intuitive, while the Cantor set is complex, counterintuitive. For $ b=1 $, it is known that if the frequency $ \alpha\in\Q^d $, the spectrum of $ H_{v,\alpha,\theta} $ is a disjoint union of at most finite compact intervals. Furthermore, Bethe–Sommerfeld Conjecture states that for $ b\geq 2 $, the spectrum of the $ b $-dimensional Schr\"odinger operator with periodic potential contains only finitely many gaps, partial advances see \cite{parnovski2008bethe,han2018discrete}. But for quasi-periodic case, the spectrum contains only finitely many gaps is a novel phenomenon. For $ b=1 $, Goldstein-Schlag-Voda \cite{goldstein2019spectrum} showed that for multi-frequency case, the spectrum can be a single interval. While for $ b\geq 2 $, things become easier. It is known that the spectrum of separable $ b $-dimensional Schr\"odinger operators are sums of the spectrum of single 1-dimensional operators (consult section \ref{separable}). Thus if we assume the potential $ v $ of $ H_{v,\alpha,\theta} $ is separable potentials,  intuitively, even though the spectrum of 1-dimensional operators is a Cantor set, if those gaps in every Cantor set have good control, then their sums can still produce an interval, thus it is natural to expect a condition such that the interval spectrum occurs. 
The famous Newhouse Gap Lemma \cite{newhouse1979abundance} provides a tool to link the gap information for a single Cantor set and their sums, thus our gap estimates can be applied to show the interval spectrum. This approach has been studied by Takase \cite{takase2021spectra}, who showed that if any 1-dimensional Schr\"odinger operator with analytic potential frequency-dependent close to constants, then their sums is an interval. We extend this work to infinite differentiable regime, that is, the following result:
\begin{theorem}\label{intervalspec}
	For separable $ b $-dimensional discrete Schro\"odinger operators $\hat{H}$ generated by one dimensional quasi-periodic Gevrey-class Schr\"odinger operators
	\begin{equation}
		\hat{H}=\Delta+\sum_{j=1}^b v_j(n_j\alpha_j+\theta_j)\delta_{nn'},
	\end{equation}
    where $ v_j\in G_{r_0}^{\nu}(\T^d,\R) \text{ with } 0<\nu<1$, for $ j=1,\cdots,b $. Assume $ \alpha_j \in \mathrm{DC}_{d}(\gamma,\tau) $, then there exists $\varepsilon=\varepsilon(\gamma,\tau,d,\nu,r_0)>0$, such that if 
    $$ 
    0<|v_j|<\varepsilon \text{ \ \ for } j=1,\cdots,b, 
    $$
    the spectrum of $ \hat{H} $ is an interval.
\end{theorem}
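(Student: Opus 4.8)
The plan is to use the separable structure of $\hat H$ to reduce the statement to one about a Minkowski sum of one--dimensional spectra, and then to run a thickness / Newhouse--gap--lemma argument fed by the gap bounds of Theorem \ref{thm1.2}. By separability (see section \ref{separable}), the spectrum of $\hat H$ is the arithmetic sum $\Sigma(\hat H)=\Sigma_{v_1,\alpha_1}+\cdots+\Sigma_{v_b,\alpha_b}$, so it suffices to prove this sum is a closed interval. Write $\Sigma_j:=\Sigma_{v_j,\alpha_j}$ and fix $r=r_0/2$. Applying Theorem \ref{thm1.2} to each factor, for $|v_j|$ small $\Sigma_j$ is obtained from an interval of length close to $4$ by removing its open gaps $G_k(v_j)$, $k\in\Z^d\setminus\{0\}$, with $|G_k(v_j)|\le|v_j|^{1/2}e^{-r|2\pi k|^{\nu}}$; in particular all gaps of $\Sigma_j$ together have total length at most $C(d,\nu,r)\,|v_j|^{1/2}$.

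\textbf{Thickness of one factor.} I would bound the Newhouse thickness $\tau(\Sigma_j)$ from below by testing the ordering of the bounded gaps of $\Sigma_j$ by increasing label norm $|k|$. By the Gap--Labelling theorem $2\rho(G_k)\equiv\langle k,\alpha_j\rangle\bmod\Z$ for the fibered rotation number (section \ref{sec2.2}), so $\alpha_j\in\mathrm{DC}_d(\gamma,\tau)$ forces any two distinct open gaps with labels of norm $\le M$ to have rotation numbers at distance $\ge\gamma/(2(2M)^{\tau})$. Converting rotation number to energy through the density of states, one sees that after all gaps of label norm $\le N$ have been removed, every remaining ``bridge'' has length $\gtrsim\gamma^{2}N^{-c\tau}$ for an absolute constant $c>0$ (only polynomially small in $N$); near the spectral edges, where the density of states is unbounded, one instead uses that the integrated density of states is Hölder there of a definite positive exponent — square--root type for small potentials, a quantitative consequence of Theorem \ref{thm1.1} and the refined estimates of section 4 near the edge — together with the fact, again from the Diophantine condition, that a gap of label norm $\le N$ cannot lie within $\gamma^{2}N^{-2\tau}$ of an edge. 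The next gaps removed have label norm $>N$, hence width $<|v_j|^{1/2}e^{-r(2\pi N)^{\nu}}$, so the ratio of bridge length to gap width is $\gtrsim\gamma^{2}N^{-c\tau}|v_j|^{-1/2}e^{r(2\pi N)^{\nu}}$. The sub--exponential factor eventually overwhelms $N^{c\tau}$, while on any bounded range of $N$ the factor $|v_j|^{-1/2}$ dominates; hence the infimum of this ratio over $N\ge1$ is a positive quantity depending only on $\gamma,\tau,d,\nu,r_0$ and $|v_j|$ which tends to $+\infty$ as $|v_j|\to0$. So $\tau(\Sigma_j)>1$ provided $|v_j|<\varepsilon$ with $\varepsilon$ small enough.

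\textbf{Assembling the sum.} Since $\tau(\Sigma_i)\tau(\Sigma_j)>1$, the Newhouse gap lemma \cite{newhouse1979abundance} gives that $\Sigma_1+\Sigma_2$ is a closed interval: the convex hull of each $\Sigma_j$ has length $\approx4$, far longer than any single gap, so the ``linking'' hypotheses hold, and the two extreme points of the sum are handled directly using that each $\Sigma_j$ has full Lebesgue density at its own edges (again via the edge behavior of the density of states). An interval has infinite thickness, and the sum of an interval with a compact set whose gaps are shorter than the interval is again an interval; so induction on $j$ gives that $\Sigma_1+\cdots+\Sigma_b$ is a closed interval, which is the assertion. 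This is the Gevrey analogue of Takase's analytic theorem \cite{takase2021spectra}, the only new analytic input being Theorem \ref{thm1.2}: the sub--exponential gap decay still beats the polynomial bridge bound, so the same mechanism applies.

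\textbf{Main obstacle.} The delicate point is the uniform lower bound on bridge lengths in the second step, and inside it the behavior at the spectral edges: there the rotation number $\rho\to0$ is resonant, so the relevant reducibility normal form is the resonant one, and it is exactly here that the \emph{quantitative} strength of Theorem \ref{thm1.1} — as opposed to plain almost reducibility — is needed, in order to control the modulus of continuity of the integrated density of states near the bottom and top of each one--dimensional spectrum. Away from the edges the argument is comparatively soft, being a direct combination of the gap--labelling identity, the Diophantine condition, and the sub--exponential gap decay.
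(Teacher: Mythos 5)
Your overall architecture is the same as the paper's: reduce via separability (Theorem \ref{separablespec}) to the Minkowski sum $\Sigma_1+\cdots+\Sigma_b$, feed the sub-exponential gap bounds of Theorem \ref{thm1.2} into a lower bound on the Newhouse thickness of each factor, and conclude with a gap lemma (the paper uses Astels' generalization, Theorem \ref{astels}, together with Lemma \ref{distHaus} to verify the diameter conditions; your induction through the classical Newhouse lemma is an acceptable variant). The genuine gap is in your thickness step, at the point you describe as ``comparatively soft''. To turn the gap-labelling/Diophantine lower bound on the increment of the integrated density of states across a bridge into a lower bound on the bridge's \emph{length}, you need an upper bound on the modulus of continuity of the IDS at that bridge, i.e.\ a quantitative H\"older estimate, and you need it at \emph{every} bridge, not only near $\inf\Sigma_j$ and $\sup\Sigma_j$. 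Bridges can be polynomially small in the label norm, so neither the boundedness of the free density of states away from $\pm2$ nor the uniform convergence of the IDS as $|v_j|\to0$ controls IDS increments at those scales; a priori the density of states of the perturbed operator can be badly concentrated anywhere in the spectrum (e.g.\ near interior gap edges and resonances). This is exactly what the paper imports as Theorem \ref{Caiao} (the uniform $\frac12$-H\"older continuity of $N_{v,\alpha}$ from \cite{Cai2017SharpHC}) and applies in both cases of Lemma \ref{thicklem}: $|F_k^+-E_k^+|\ge C_0^{-2}|N(F_k^+)-N(E_k^+)|^2$ with $|N(F_k^+)-N(E_k^+)|=|\langle k,\alpha\rangle|_{\T}$ or $|\langle k'-k,\alpha\rangle|_{\T}$. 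Your bridge bound $\gtrsim\gamma^2N^{-c\tau}$ implicitly has this square in it, but as written your argument only invokes H\"older continuity near the spectral edges and leaves the interior conversion unjustified; moreover the edge H\"older bound itself is asserted as ``a consequence of Theorem \ref{thm1.1}'' without being derived, whereas the paper simply cites the known uniform result (Gevrey potentials are $C^\infty$, so the hypothesis $k\ge 550\tau$ of Theorem \ref{Caiao} is vacuous here).

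Once the uniform $\frac12$-H\"older input is in place, the rest of your sketch goes through and matches the paper: the ratio of bridge length to adjacent gap length is at least of order $\gamma^2C_0^{-2}\epsilon_0^{-1/2}e^{r|2\pi k|^\nu}|k|^{-2\tau}$ (with the paper's Case 2 handled by ordering gaps through the quantity $\zeta(k)$ built from the gap length rather than by label norm, a cosmetic difference), so $\tau(\Sigma_j)\to\infty$ as $|v_j|\to0$, and the diameter/gap compatibility hypotheses of Theorem \ref{astels} follow from $\mathrm{dist}_{\mathrm{Haus}}(\Sigma_j,[-2,2])\to0$ via Lemma \ref{distHaus}; no separate argument about ``full Lebesgue density at the edges'' is needed for the endpoints of the sum.
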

\begin{remark}
    Theorem \ref{intervalspec} is necessarily perturbative due to \cite{bourgain2002spectrum}.
\end{remark}
\subsection{Structure of the paper}
Some preliminaries are introduced in section 2. Quantitative almost reducibility for Gevrey class is established in section 3, section 4. In section 5, section 6, section 7, we will apply quantitative almost reducibility to finish the proof of Theorems \ref{thmlong}, \ref{thm1.2} and \ref{intervalspec}, respectively. Some standard proofs are included in the Appendix for completeness.

\section{Preliminaries}

For an $ n\times n $ matrix $A$, we will use the canonical norm $\|A\|:=n \max\limits_{1\leq i,j\leq n}|a_{ij}|$. 
\subsection{Gevrey functions}\label{sec2.1}
Recall that a function 
$ f\in C^{\infty}(\T^d,\R) $ 
is called $ \nu $-Gevrey regular, if we write it as $ f(\theta)=\sum_{k\in\Z^d}\widehat{f}(k)e^{\mathrm{i}2\pi \langle k,\theta\rangle} $, there exists $ \nu\in(0,1] $, such that $ |\widehat{f}(k)|\lesssim e^{-\rho |k|^{\nu}} $, $ \forall k\in\Z^d $, where $ \rho>0 $, $ |k|=\sum_{i=1}^d |k_i| $. In particular, if $ \nu=1 $, $ f $ is analytic. In this paper, we will set
\[ |f|_r=|f|_{\nu,r}:=\sum_{k\in\Z^d}|\widehat{f}(k)|e^{r|2\pi k|^{\nu}}<\infty, \ 0<\nu<1, \]
where $ r>0 $ is called the ``width'', and denote by $ G^\nu_r(\T^d,*) $ the set of these $ * $-valued functions ($ * $ will usually denote $ \R,\mathrm{SL}(2,\R)$).   
\begin{remark}
    If $ f\in G_{r}^\nu(2\T^d,\R) $, $ |f|_{r}:=\sum_{k\in\Z^d}|\widehat{f}(k)|e^{r|\pi k|^\nu}. $
\end{remark}
\subsection{Quasi-periodic cocycle and reducibility}\label{sec2.2}
In this subsection, we recall some basic concepts in quasi-periodic dynamics.

For given $ A\in C^0(\T^d,\mathrm{SL}(2,\C)) $ and $ \alpha\in\R^d $ with $ (1,\alpha) $ rationally independent, the quasi-periodic \textit{cocycle} is defined as
$$
    (\alpha,A):\left\{ \begin{aligned}
        \T^d\times\C^2& & &\to & &\T^d\times \C^2\\
        (\theta,v)& & &\mapsto & &(\theta+\alpha,A(\theta)v)
    \end{aligned}\right. .
$$ 
Denote $ (\alpha,A)^n=(n\alpha,\mathcal{A}_n) $, where
$$
    \mathcal{A}_n(\theta):=\left\{\begin{aligned}
        &A(\theta+(n-1)\alpha)\cdots A(\theta+\alpha)A(\theta), & &n\geq 0\\
        &A^{-1}(\theta+n\alpha)A^{-1}(\theta+(n+1)\alpha)\cdots A^{-1}(\theta-\alpha), & &n<0
    \end{aligned}\right. .
$$ 
Then the \textit{Lyapunov exponent} is well-defined by $$ L(\alpha,A):=\lim_{n\to\infty}\frac{1}{n}\int_{\T^d}\ln\lVert\mathcal{A}_n(\theta)\rVert\dif \theta .$$

The cocycle $ (\alpha,A) $ is \textit{uniformly hyperbolic} if for any $ \theta\in\T^d $, there exists a continuous splitting $ \C^2=E^s(\theta)\oplus E^u(\theta) $ such that for every $ n\geq 0 $,
$$
    \begin{aligned}
        \lvert \mathcal{A}_n(\theta)v\rvert&\leq Ce^{-cn}\lvert n\rvert, & &v\in E^s(\theta),\\
        \lvert \mathcal{A}_n(\theta)^{-1}v\rvert&\leq Ce^{-cn}\lvert n\rvert, & &v\in E^u(\theta+n\alpha),
    \end{aligned}
$$ 
for some constants $ C,c>0 $, i.e., the splitting is invariant by the dynamics. 
If we consider the eigenvalue equation $H_{v,\alpha, \theta}u=Eu$, then we can induce a \textit{Schr\"odinger cocycle}, denoted by $(\alpha,S_E^v)$:
\begin{equation}
    \begin{pmatrix}
        u_{n+1}\\u_n
    \end{pmatrix}=
    \begin{pmatrix}
        E-v(\theta+n\alpha)&-1\\
        1&0
    \end{pmatrix}
    \begin{pmatrix}
        u_n\\u_{n-1}
    \end{pmatrix}.
\end{equation}
It is well-known that $ E\notin \Sigma_{v,\alpha} $ if and only if $ (\alpha, S_E^v) $ is uniformly hyperbolic.

Assume $ A\in C^0(\T^d,\mathrm{SL}(2,\R)) $ is homotopic to the identity, then there exist $ \psi:\T^d\times\T\to\R $ and $ u:\T^d\times\T\to\R_+ $ such that 
$$
    A(x)\cdot\begin{pmatrix}
        \cos 2\pi y\\\sin 2\pi y
    \end{pmatrix}=u(x,y)\begin{pmatrix}
        \cos 2\pi (y+\psi(x,y))\\\sin 2\pi (y+\psi(x,y))
    \end{pmatrix}.
$$ 
The function $ \psi $ is called a \textit{lift} of $ A $. Let $ \mu $ be any probability measure on $ \T^d\times\R $ which is invariant by the continuous map $ T:(x,y)\mapsto(x+\alpha,y+\psi(x,y)) $, projecting over Lebesgue measure on the first coordinate. 
Then the number
$$
    \rho(\alpha,A)=\int\psi\ \dif\mu \mod \Z
$$ 
does not depend on the choices of $ \psi $ and $ \mu $ and is called the \textit{fibered rotation number} of $ (\alpha,A) $, see \cite{gaplabel}. Given $ \phi\in\T^d $, let $ R_{\phi}:=\begin{pmatrix}
    \cos 2\pi \phi&\sin 2\pi \phi\\
    \cos 2\pi \phi&\cos 2\pi \phi
\end{pmatrix} $. It is immediate from the definition that
\begin{equation}\label{lem3.2} 
    |\rho(\alpha,A)-\phi|<\lVert A-R_\phi\rVert_{0},
\end{equation} 
where $ \lVert\cdot\rVert_0 $ denotes the $ C^0 $ norm.
Any $ B:2\T^d\to\mathrm{SL}(2,\R) $ is homotopic to $ \theta\in\T^d\to R_{\frac{\langle n,\theta\rangle}{2}} $ for some $ n\in\Z^d $ called the \textit{degree} of $ B $, denoted by $ \deg B=n $. 
Given two real cocycles $ (\alpha,A) $ and $ (\alpha,A') $, we say they are \textit{real conjugated} to each other if there exists $ B\in C^0(2\T^d, \mathrm{SL}(2,\R)) $ such that
$$
    B(\theta+\alpha)^{-1}A(\theta)B(\theta)=A'(\theta).
$$ 
The fibered rotation number has the following property:
\begin{theorem}[\cite{krikorian2004}]
	Assume $ (\alpha,A) $ is conjugated to $ (\alpha, A') $ by $ B\in C^0 (2\mathbb{T}^d,\mathrm{SL}(2, \R)) $ with $ \deg B=n $, then we have
    \begin{equation}\label{degree}
	\rho(\alpha, A)=\rho(\alpha, A')+\frac{\langle n,\alpha\rangle}{2}.
	\end{equation}
\end{theorem}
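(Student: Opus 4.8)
The plan is to read both rotation numbers off the way the iterated cocycles rotate directions, and to isolate the linear-in-$k$ term that the degree of $B$ contributes. Following the construction of $\rho$ in the excerpt, I lift the action of $\mathrm{SL}(2,\R)$ on the circle of directions to $\R$: for $M\in C^{0}(\T^{d},\mathrm{SL}(2,\R))$ homotopic to the identity, write $M(x)\cdot(\cos2\pi y,\sin2\pi y)^{T}=u_{M}(x,y)\,(\cos2\pi(y+\psi_{M}(x,y)),\sin2\pi(y+\psi_{M}(x,y)))^{T}$; then $\psi_{M}$ is $\Z^{d}$-periodic in $x$ and $\Z$-periodic in $y$, hence bounded, and I set $\Psi^{M}_{k}(x,y):=\sum_{j=0}^{k-1}\psi_{M}(x+j\alpha,y_{j})$ with $y_{0}=y$, $y_{j+1}=y_{j}+\psi_{M}(x+j\alpha,y_{j})$, so that $\mc{M}_{k}(x)$ sends the direction $y$ to $y+\Psi^{M}_{k}(x,y)$. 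For the degree-$n$ map $B\in C^{0}(2\T^{d},\mathrm{SL}(2,\R))$ the same recipe gives a lift with $\psi_{B}(x+2e_{j},y)=\psi_{B}(x,y)+n_{j}$, i.e.\ $\psi_{B}(x,y)=\tfrac{\langle n,x\rangle}{2}+\beta(x,y)$ with $\beta$ bounded, and likewise $\psi_{B^{-1}}(x,y)=-\tfrac{\langle n,x\rangle}{2}+\beta'(x,y)$ with $\beta'$ bounded.

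First I would record the standard fact, equivalent to the ``integral'' definition of $\rho$ used in the excerpt, that $\tfrac1k\Psi^{M}_{k}(x,y)\to\rho(\alpha,M)$ and that the limit is independent of $(x,y)$: since each fibre map is an orientation-preserving lift of a degree-one circle homeomorphism, $|\Psi^{M}_{k}(x,y)-\Psi^{M}_{k}(x,y')|\le 2$ whenever $|y-y'|\le1$, which gives independence in $y$, while existence of the limit and independence in $x$ follow from minimality of $(\T^{d},x\mapsto x+\alpha)$ together with the Krylov--Bogolyubov construction of a $T_{M}$-invariant measure (here $T_{M}$ is the map $T$ of the excerpt for the cocycle $(\alpha,M)$). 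This is the Johnson--Moser/Herman description of the fibered rotation number.

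Next I would telescope the conjugacy. From $B(\theta+\alpha)^{-1}A(\theta)B(\theta)=A'(\theta)$ one gets $\mc{A}'_{k}(x)=B(x+k\alpha)^{-1}\,\mc{A}_{k}(x)\,B(x)$ for all $k\ge1$; applying this to directions and using additivity of accumulated angles under composition,
\[
\Psi^{A'}_{k}(x,y)=\psi_{B}(x,y)+\Psi^{A}_{k}\big(x,\,y+\psi_{B}(x,y)\big)+\psi_{B^{-1}}\big(x+k\alpha,\,\cdot\,\big).
\]
Now fix $(x,y)$ and divide by $k$: the first term is $O(1/k)$; the middle term tends to $\rho(\alpha,A)$ by the previous paragraph, the shift of the starting direction being irrelevant; and $\psi_{B^{-1}}(x+k\alpha,\cdot)=-\tfrac{\langle n,x\rangle}{2}-\tfrac{k\langle n,\alpha\rangle}{2}+\beta'(x+k\alpha,\cdot)$ with $\beta'$ bounded, so its $k$-th part tends to $-\tfrac{\langle n,\alpha\rangle}{2}$. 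Therefore $\rho(\alpha,A')=\lim_{k}\tfrac1k\Psi^{A'}_{k}(x,y)=\rho(\alpha,A)-\tfrac{\langle n,\alpha\rangle}{2}$, which is exactly (\ref{degree}). As a sanity check, taking $A'=\mathrm{Id}$ and $B(\theta)=R_{\langle n,\theta\rangle/2}$ yields $A(\theta)=R_{\langle n,\alpha\rangle/2}$, whose rotation number is $\tfrac{\langle n,\alpha\rangle}{2}$ as predicted; and $\deg A=\deg A'$ always holds, so $A$ is automatically homotopic to the identity when $A'$ is, keeping both rotation numbers well defined.

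The hard part is the bookkeeping of the two covers rather than any single estimate: $A,A'$ are $\Z^{d}$-periodic while $B$ is only $2\Z^{d}$-periodic, and one must check that the term $\tfrac{\langle n,x\rangle}{2}$ in $\psi_{B}$ is fully cancelled inside $\Psi^{A'}_{k}$ except for the drift $\tfrac{k\langle n,\alpha\rangle}{2}$, so that $\Psi^{A'}_{k}(x,\cdot)$ still has bounded deviation from a line of slope $\rho(\alpha,A')$ and the growth-rate identity above is legitimate. An alternative, measure-theoretic route makes the same point without iterates: the fibrewise substitution $(x,y)\mapsto(x,y+\psi_{B}(x,y))$ conjugates $T_{A'}$ to $T_{A}$, so pushing a $T_{A'}$-invariant probability measure forward produces a $T_{A}$-invariant one, and transporting $\int\psi_{A'}\,d\mu'$ through this change of variables again separates $\tfrac{\langle n,\alpha\rangle}{2}$ from the $\tfrac{\langle n,x\rangle}{2}$-part of $\psi_{B}$; I would nonetheless keep the iterate computation as the primary argument, since the surviving term is most transparent there.
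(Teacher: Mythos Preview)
The paper does not prove this theorem; it is quoted from \cite{krikorian2004} as a black box, so there is no proof to compare against. Your proposal supplies a correct and standard argument: you identify $\rho$ with the linear growth rate $\lim_k \tfrac{1}{k}\Psi^{M}_{k}$ of the accumulated lifted angle, telescope the conjugacy $\mathcal{A}'_{k}(x)=B(x+k\alpha)^{-1}\mathcal{A}_{k}(x)B(x)$ at the level of lifts, and isolate the only $k$-linear contribution, namely $-\tfrac{k\langle n,\alpha\rangle}{2}$ coming from $\psi_{B^{-1}}(x+k\alpha,\cdot)$. Two small points worth tightening: the identity (\ref{degree}) is to be read modulo $\Z$, as $\rho$ is defined modulo $\Z$ in the excerpt, so you should phrase the conclusion accordingly; and the ``hard part'' you flag about the half-integer term $\tfrac{\langle n,x\rangle}{2}$ is in fact harmless, since for fixed $x$ it sits inside the $O(1)$ pieces $\psi_{B}(x,y)$ and $\beta'(x+k\alpha,\cdot)$ and disappears after dividing by $k$, exactly as your computation shows.
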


The cocycle $ (\alpha,A) $ is said to be ($ \nu $-Gevrey) \textit{reducible} if it can be conjugated to a constant cocycle, that is, there exist $ B\in G^\nu(2\mathbb{T}^d,\mathrm{SL}(2,\R)) $ and $ A_0\in\mathrm{SL}(2,\R) $ such that
\[
    B(\theta+\alpha)^{-1}A(\theta)B(\theta)=A_0.
\]
It is said to be \textit{almost reducible}, if the closure of its conjugates contains a constant cocycle.

\subsection{Separable potential}\label{separable}
Let $ v:\Z^d\to\R $ be a potential. We call $ v $ is \textit{separable} if $ v $ is given by
$$
    v(n)=v_1(n_1)+\cdots+v_d(n_d),
$$ 
where $ n=(n_1,\cdots,n_d)\in\Z^d $.
Consequently, we have
\begin{theorem}[\cite{damanik2011spectral}]\label{separablespec}
    For d-dimensional discrete Schro\"odinger operators on $ \ell^2(\Z^d) $
    \begin{equation*}
		H=\Delta+\sum_{j=1}^d v_j(n_j)\delta_{nn'},
	\end{equation*}
    denote its spectrum by $ \hat{\Sigma} $. Consider the associated 1-dimensional Schr\"odinger operators on $ \ell^2(\Z) $
    $$
        (H_j u)_n=\Delta+v_j(n)\delta_{nn'},
    $$ 
    and denote its spectrum by $ \Sigma_j $. Then we have
    $$
        \hat{\Sigma}=\Sigma_1+\cdots+\Sigma_d.
    $$ 
\end{theorem}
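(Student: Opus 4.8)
\textbf{Proof strategy for Theorem \ref{intervalspec}.}
The plan is to combine Theorem \ref{separablespec} with the sub-exponential gap estimate of Theorem \ref{thm1.2} and the Newhouse Gap Lemma. By Theorem \ref{separablespec}, the spectrum of $\hat H$ is the Minkowski sum $\hat\Sigma=\Sigma_1+\cdots+\Sigma_b$, where $\Sigma_j=\Sigma_{v_j,\alpha_j}$ is the spectrum of the one-dimensional operator $H_{v_j,\alpha_j,\theta_j}$. Since each $\alpha_j\in\mathrm{DC}_d(\gamma,\tau)$ and $v_j\in G^\nu_{r_0}(\T^d,\R)$ with $|v_j|<\varepsilon$, fixing some $r\in(0,r_0)$ and choosing $\varepsilon\le\epsilon_*(\gamma,\tau,d,\nu,r_0,r)$ so small that Theorem \ref{thm1.2} applies, each gap $G_k(v_j)=(E_k^-(v_j),E_k^+(v_j))$ of $\Sigma_j$ satisfies $|G_k(v_j)|\le\varepsilon^{1/2}e^{-r|2\pi k|^\nu}$. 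One also needs the total measure of $\Sigma_j$ inside its convex hull to be large: since $\sum_{k\ne0}|G_k(v_j)|\le\varepsilon^{1/2}\sum_{k\ne0}e^{-r|2\pi k|^\nu}=:\varepsilon^{1/2}C_0$, and $\Sigma_j$ is $O(\varepsilon)$-close to the unperturbed spectrum $[-2,2]$ of $\Delta$ (so its convex hull has length $\ge 4-O(\varepsilon)$), the Cantor set $\Sigma_j$ occupies all but an $O(\varepsilon^{1/2})$ portion of an interval of length comparable to $4$.

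The key step is to show that each $\Sigma_j$ is a Cantor set of \emph{large thickness}, and then invoke the Gap Lemma: if $K_1,K_2\subset\R$ are Cantor sets whose thicknesses $\tau(K_1),\tau(K_2)$ satisfy $\tau(K_1)\cdot\tau(K_2)>1$, and neither lies in a gap of the other, then $K_1+K_2$ contains the interval $[\min K_1+\min K_2,\ \max K_1+\max K_2]$; iterating gives that $\Sigma_1+\cdots+\Sigma_b$ is an interval. To bound the thickness from below, recall that Newhouse's thickness of $K$ at a boundary point $u$ of a gap $U$ is $\inf_U |C|/|U|$ over the bridge $C$ adjacent to $U$ on the far side, and $\tau(K)=\inf_U$ of this quantity. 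The gaps of $\Sigma_j$ are precisely the $G_k(v_j)$, so I must lower-bound, for every label $k$, the length of the bridge of $\Sigma_j$ sitting between $G_k(v_j)$ and the next gap on either side, and divide by $|G_k(v_j)|$. Here the Gap-Labelling theorem is essential: it tells us the rotation numbers at the gap edges are $\tfrac12\langle k,\alpha_j\rangle\bmod\Z$, and the map $E\mapsto\rho$ is a bi-Lipschitz-type homeomorphism on $\Sigma_j$ (with explicit moduli from the almost reducibility / quantitative estimates of Section 4), so two gaps $G_k$ and $G_{k'}$ with $k\ne k'$ are separated in rotation number by at least $\tfrac12\|\langle k-k',\alpha_j\rangle\|_{\R/\Z}\ge\tfrac{\gamma}{2|k-k'|^\tau}$, hence separated in energy by a spectral amount bounded below in terms of this Diophantine gap. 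Combined with the sub-exponential upper bound $|G_k(v_j)|\le\varepsilon^{1/2}e^{-r|2\pi k|^\nu}$, the ratio bridge/gap is at least (const)$\cdot\varepsilon^{-1/2}\,e^{r|2\pi k|^\nu}\cdot(\text{polynomially small in }k)$, which tends to $+\infty$ as $|k|\to\infty$ because sub-exponential growth beats any polynomial decay; for the finitely many small $|k|$ one uses directly that the total gap length is $O(\varepsilon^{1/2})$ while the convex hull has length $\approx 4$, so the bridges there are of size $\approx 4-O(\varepsilon^{1/2})$. Therefore $\tau(\Sigma_j)\ge c\,\varepsilon^{-1/2}$, which exceeds $1$ once $\varepsilon$ is small; in particular $\tau(\Sigma_i)\tau(\Sigma_j)>1$ for all $i,j$. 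The ``linked'' hypothesis (no $\Sigma_i$ contained in a single gap of $\Sigma_j$) is automatic since each $\Sigma_j$ has diameter $\approx 4$ while all gaps have length $\le\varepsilon^{1/2}$.

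The main obstacle is establishing the bridge lower bound uniformly in $k$, i.e., proving that consecutive gaps of $\Sigma_j$ (in the energy axis) are not too close together. Adjacency in the energy ordering need not correspond to closeness of labels, so one cannot simply use the Diophantine separation of two fixed labels; instead I will argue that between any two energies $E_1<E_2$ in $\Sigma_j$ that are the right and left endpoints of consecutive gaps $G_k$, $G_{k'}$, the rotation numbers $\rho(E_1)=\tfrac12\langle k,\alpha_j\rangle$ and $\rho(E_2)=\tfrac12\langle k',\alpha_j\rangle$ differ by the full $\rho$-measure of the bridge (no gap strictly between them), and then invert the quantitative modulus of continuity of $E\mapsto\rho$ coming from the strong almost reducibility of Theorem \ref{thm1.1}/Section 4 — the relevant statement being that on $\Sigma_{v_j,\alpha_j}$ the inverse of the rotation number map is $\tfrac15$-Hölder (or better), with constants depending only on $\gamma,\tau,d,\nu,r_0$. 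This is the place where the \emph{quantitative} nature of the Gevrey almost reducibility is genuinely used, rather than just qualitative almost reducibility: we need the Hölder constant and exponent to be uniform over $j$ and independent of the perturbation size, so that the thickness bound $\tau(\Sigma_j)\gtrsim\varepsilon^{-1/2}$ survives. Once that modulus of continuity is in hand, the rest is the bookkeeping above plus a direct application of the Gap Lemma, and the perturbative nature of the conclusion is consistent with the obstruction noted in \cite{bourgain2002spectrum} recorded in the Remark.
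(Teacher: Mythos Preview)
Your proposal does not address the stated theorem. Theorem~\ref{separablespec} is the cited fact (from \cite{damanik2011spectral}) that the spectrum of a separable Schr\"odinger operator is the Minkowski sum of the one-dimensional spectra; the paper gives no proof of it. What you have written is a proof strategy for Theorem~\ref{intervalspec} instead.

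That said, if one compares your sketch to the paper's actual proof of Theorem~\ref{intervalspec} (Section~7), the broad architecture agrees: reduce to $\hat\Sigma=\Sigma_1+\cdots+\Sigma_b$, show each $\Sigma_j$ has large thickness via the sub-exponential gap bound of Theorem~\ref{thm1.2}, and apply a Newhouse-type Gap Lemma. Two points of divergence are worth noting. First, you invoke the pairwise Newhouse criterion $\tau(K_1)\tau(K_2)>1$ and plan to iterate; the paper instead uses Astels' multi-set version (Theorem~\ref{astels}), checking $\sum_j \tau(\Sigma_j)/(\tau(\Sigma_j)+1)\geq 1$ together with the size conditions \eqref{GammaK}. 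Either route works once $\tau(\Sigma_j)\to\infty$, but Astels' formulation handles all $b$ summands at once. Second, and more substantively, your lower bound on the bridge length relies on an ``inverse H\"older'' modulus for $E\mapsto\rho$ that you hope to extract directly from the quantitative almost reducibility of Section~4. The paper does not do this: it imports the $\tfrac12$-H\"older continuity of the integrated density of states from \cite{Cai2017SharpHC} (Theorem~\ref{Caiao}), which immediately gives $|F_k^+-E_k^+|\geq C_0^{-2}|N(F_k^+)-N(E_k^+)|^2$, and then uses Gap Labelling plus the Diophantine condition to bound $|N(F_k^+)-N(E_k^+)|=|\langle k'-k,\alpha\rangle|_\T\geq \gamma|k'-k|^{-\tau}$. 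The issue you correctly flag---that the energy-adjacent gap $G_{k'}$ need not have $k'$ close to $k$---is resolved in the paper via the auxiliary quantity $\zeta(k)=\bigl[-r^{-1}\log(\epsilon_0^{-1/2}\ell(G_k))\bigr]^{1/\nu}$, which dominates both $|k|$ and $|k'|$ whenever $\ell(G_{k'})\ge\ell(G_k)$; this replaces your somewhat vague ``sub-exponential beats polynomial'' step by a clean computation. Your remark that the map is ``bi-Lipschitz-type'' overstates the regularity available; only $\tfrac12$-H\"older is used (and is what the cited result provides).
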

\subsection{Thickness and the Newhouse Gap Lemma}
Let $ K $ be a nonempty compact subset of $ \R $, we define its \textit{thickness}. We consider the \textit{gaps} of $ K $: a gap of $ K $ is a connected component of $ \R\backslash K $; a bounded gap is a bounded connected component of $ \R\backslash K $. If $ I $ is an interval, let $ \ell(I) $ be its \textit{length}. Let $ U $ be any bounded gap and $ u $ be a boundary point of $ U $, thus $ u\in K $. Let $ C $ be the \textit{bridge} of $ K $ at $ u $, i.e., the maximal interval in $ \R $ such that\begin{enumerate}
    \item $ u $ is a boundary point of $ C $,
    \item if $ C $ contains points of a gap $ V $, then $ \ell(V)<\ell(U) $.
\end{enumerate}

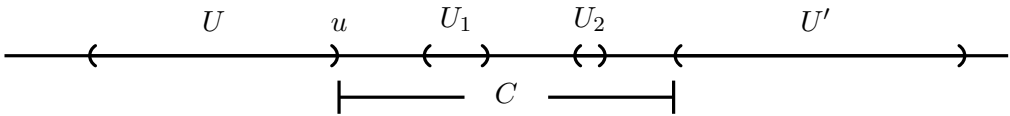
\begin{figure}[htbp]
    \centering
    \begin{tikzpicture}[scale=11]
    \draw[-, very thick] (-0.1,0) -- (1.1,0);
    \draw[(-), very thick] (0,0) -- (0.3,0);
    \draw[(-), very thick] (0.4,0) -- (0.48,0);
    \draw[(-), very thick] (0.58,0) -- (0.62,0);
    \draw[(-), very thick] (0.7,0) -- (1.05,0);
    \draw[-, very thick] (0.3,-0.05) -- (0.45,-0.05);
    \draw[very thick] (0.3,-0.03)--(0.3,-0.07);
    \draw[ -, very thick] (0.55,-0.05) -- (0.7,-0.05);
    \draw[very thick] (0.7,-0.03)--(0.7,-0.07);
    \draw (0.3,0.5pt) node[above] {$u$};
    \draw (0.15,0.5pt) node[above] {$U$};
    \draw (0.44,0.5pt) node[above] {$U_1$};
    \draw (0.6,0.5pt) node[above] {$U_2$};
    \draw (0.87,0.5pt) node[above] {$U'$};
    \draw (0.5,0.5pt) node[below=12pt] {$ C $};
    \end{tikzpicture}  
    \caption{$ \ell(U')>\ell(U) $ and $ C $ is the bridge of $ K $ at $ u $}
\end{figure}

Then the thickness of $ K $ at $ u $ is defined as $ \tau(K,u)=\ell(C)/\ell(U) $ and the thickness of $ K $, denoted by $ \tau(K) $, is the infimum over these $ \tau(K,u) $ for all boundary points $ u $ of bounded gaps. 

Note that $ \tau(K)=0 $, if $ K $ has an isolated point, and $ \tau(K)=+\infty $ when $ K $ is an interval.
To get interval spectrum, we require the following Gap Lemma in \cite{astels2000cantor}, which is the generalization of the classical Gap Lemma established by Newhouse \cite{newhouse1979abundance}.
\begin{theorem}[\cite{astels2000cantor}]\label{astels}
    Let $ K_1,\cdots,K_d$  $ (d\ge 2) $ be nonempty compact subsets of $ \R $. For each nonempty compact subset $ K $ of $ \R $, define 
    $$ 
    \Gamma(K):=\sup\{\ell(U):\text{ U is a bounded gap of }  K \}. 
    $$ 
    Assume
    \begin{equation} \label{GammaK}
        \left\lbrace\begin{aligned}
            &\Gamma(K_j)\leq \diam(K_i),\ \ \forall 1\leq j<i\leq d,\\
            &\Gamma(K_i)\leq \diam(K_1)+\cdots+\diam(K_{i-1}),\ \ \forall 2\le i\le d.
        \end{aligned}\right.
    \end{equation} 
    Then if $ \sum\limits_{i=1}^d\frac{\tau(K_i)}{\tau(K_i)+1}\geq 1 $, we have $ \tau(K_1+\cdots+K_d)=+\infty $ and
        $
            K_1+\cdots+K_d
        $ is an interval.

\end{theorem}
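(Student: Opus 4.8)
\textbf{Proof proposal for Theorem~\ref{astels} (the generalized Gap Lemma of Astels).}

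The plan is to prove the statement by induction on $d$, the base case $d=2$ being the heart of the matter and essentially the content of the Newhouse Gap Lemma in its quantitative form. Throughout, the key quantity attached to a compact set $K$ is $\tau^*(K):=\tau(K)/(\tau(K)+1)\in[0,1]$; the hypothesis reads $\sum_{i=1}^d\tau^*(K_i)\ge 1$. First I would record the elementary but crucial \emph{linking lemma}: if $K_1,K_2$ are compact with disjoint convex hulls $[a_1,b_1],[a_2,b_2]$ impossible to interleave, and one of them, say $K_1$, has a bounded gap $U$ whose closure is contained in the convex hull of $K_2$ and satisfies $\ell(U)\le \Gamma(K_2)\le \mathrm{diam}(K_1)$-type bounds together with $\tau(K_1)\tau(K_2)\ge \tau(K_1)+\tau(K_2)+1$ (equivalently $\tau^*(K_1)+\tau^*(K_2)\ge 1$), then $K_1$ and $K_2$ are \emph{linked}, meaning neither is contained in a gap of the other; and from linking one deduces $K_1+K_2\supseteq[\,\min(a_1+a_2),\max(b_1+b_2)\,]$, i.e. $K_1+K_2$ is an interval. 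The proof of the linking lemma is the standard Newhouse bridge-chasing argument: given a point $s$ in the convex hull of $K_1+K_2$, one writes $s=x_1+x_2$ and, if $s\notin K_1+K_2$, there is a gap situation; one then walks along bridges, using the thickness inequality to guarantee that at each step the relevant bridge of one Cantor set is long enough to straddle the offending gap of the other, producing an infinite descent contradicting that both gap-lengths are bounded below. I expect this bridge-chasing step to be the main obstacle: it is where the precise constants $\Gamma(K_j)\le\mathrm{diam}(K_i)$ and the thickness sum condition are consumed, and one must be careful that the two boundary orderings (which endpoint of the bridge one is forced toward) are handled symmetrically.

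Next I would handle the inductive step. Assume the result for $d-1$ compact sets and suppose $K_1,\dots,K_d$ satisfy (\ref{GammaK}) and $\sum_{i=1}^d\tau^*(K_i)\ge 1$. The natural move is to split off $K_d$: let $L:=K_1+\cdots+K_{d-1}$. One must verify that $L$ and $K_d$ together satisfy the hypotheses of the $d=2$ case. For the thickness side, the key sublemma is \emph{monotonicity and superadditivity of $\tau^*$ under sums}: if $K=A+B$ with $A,B$ \emph{linked}, then $K$ is an interval so $\tau^*(K)=1\ge\tau^*(A)+\tau^*(B)$ in the degenerate sense; more usefully, in the non-interval case one shows $\tau^*(A+B)\ge\tau^*(A)+\tau^*(B)$ whenever $A,B$ are linked, or at least $\tau(A+B)=+\infty$ once the sum is an interval. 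Actually the cleanest route, following Astels, is to argue directly: by the ordering hypothesis $\Gamma(K_i)\le\mathrm{diam}(K_1)+\cdots+\mathrm{diam}(K_{i-1})$, the partial sums $S_k:=K_1+\cdots+K_k$ have $\mathrm{diam}(S_k)=\sum_{i\le k}\mathrm{diam}(K_i)$ and $\Gamma(S_{k})$ controlled inductively, so one can feed the pair $(S_{k-1},K_k)$ into the two-set linking lemma \emph{provided} $\tau^*(S_{k-1})+\tau^*(K_k)\ge 1$. To close the induction one therefore needs: if $\tau^*(S_{k-1})\ge \tau^*(K_1)+\cdots+\tau^*(K_{k-1})$ (an inductive hypothesis on partial sums), then adding $K_k$ with $\sum_{i\le d}\tau^*(K_i)\ge1$ eventually forces some partial sum to be an interval, whence $\tau^*=1$ and the remaining sets only help.

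So the detailed skeleton is: (i) prove the two-set linking lemma by Newhouse bridge-chasing, extracting the inequality $\tau^*(K_1+K_2)\ge\min(1,\tau^*(K_1)+\tau^*(K_2))$ when the ordering condition $\Gamma(K_1)\le\mathrm{diam}(K_2)$ holds (and symmetrically); (ii) iterate: set $S_1=K_1$, and for $k=2,\dots,d$ apply (i) to the pair $(S_{k-1},K_k)$, using the hypothesis $\Gamma(K_k)\le\mathrm{diam}(K_1)+\cdots+\mathrm{diam}(K_{k-1})=\mathrm{diam}(S_{k-1})$ and $\Gamma(K_j)\le\mathrm{diam}(K_i)$ for $j<i$ to check the ordering premise at each stage, obtaining $\tau^*(S_k)\ge\min(1,\tau^*(S_{k-1})+\tau^*(K_k))$; (iii) by induction $\tau^*(S_k)\ge\min\bigl(1,\sum_{i\le k}\tau^*(K_i)\bigr)$, so $\tau^*(S_d)\ge\min\bigl(1,\sum_{i\le d}\tau^*(K_i)\bigr)=1$, which by the earlier remark ($\tau(K)=+\infty\iff K$ interval, modulo checking $S_d$ has no isolated points — immediate since each $K_i$ is perfect-or-finite and a sum with a thick set is perfect) gives $\tau(K_1+\cdots+K_d)=+\infty$ and that $K_1+\cdots+K_d$ is an interval. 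The only genuinely delicate point, besides the bridge-chasing of step (i), is bookkeeping in step (ii): one must confirm that once $\tau^*(S_{k-1})=1$, i.e. $S_{k-1}$ is already an interval, the ordering hypothesis $\Gamma(K_k)\le\mathrm{diam}(S_{k-1})$ is vacuously usable (an interval plus anything is an interval), so the chain does not break; and that the hypotheses (\ref{GammaK}) are exactly what is needed to license each application and nothing more. I would present (i) in full and treat (ii)--(iii) as a clean induction.
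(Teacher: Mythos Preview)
The paper does not prove Theorem~\ref{astels}: it is quoted from Astels~\cite{astels2000cantor} as a black box and used (together with Lemma~\ref{thicklem} and Lemma~\ref{distHaus}) to deduce Theorem~\ref{intervalspec}. There is therefore no ``paper's own proof'' to compare your proposal against. Your sketch is, in broad strokes, aligned with how Astels actually argues in the cited reference (induction on $d$, with the two-set case providing the superadditivity $\tau^*(A+B)\ge\min(1,\tau^*(A)+\tau^*(B))$ under the size/ordering hypotheses), but none of that appears in this paper.
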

\begin{remark}
    In \cite{astels2000cantor,newhouse1979abundance}, the author used another definition of thickness, the equivalence of these two definitions can be found in \cite{palis1995hyperbolicity}, p. 61.   
\end{remark}
\section{The (non-)resonant space}
In Hou-You \cite{hou}, a crucial observation is that one can eliminate the non-resonant terms. In this section, we will prove a similar lemma to eliminate the non-resonant terms which is suitable for our Gevrey case.

For any $N>0$,  we define the truncation operator as\[(\mathcal{T}_N f)(\theta):=\sum_{k\in\mathbb{Z}^d,|k|\leq N} \widehat{f}(n)e^{\mathrm{i}2\pi\langle n,\alpha\rangle},\]
and for any set $K\subset{\Z^d}$, we denote \[(\mathcal{S}_K f)(\theta):=\sum_{k\in K}\widehat{f}(k)e^{\mathrm{i}2\pi\langle k,\alpha\rangle}.\]
 Given $\eta>0$, $\alpha\in\R^d$, and $ A \in \mathrm{SL}(2,\R) $. 
  Recall that there is an isomorphism from $ \mathrm{sl}(2,\R) $ to $ \mathrm{su}(1,1)$, which is given by $B\mapsto MBM^{-1}$ with \[M=\frac{1}{1+ \mathrm{i}}\begin{pmatrix}
    	1&- \mathrm{i}\\1& \mathrm{i}
    	\end{pmatrix}.\]
    	More precisely, a direct computation leads to
        \begin{equation}\label{isom}
    	M\begin{pmatrix}
    		x&y+z\\y-z&-x
    		\end{pmatrix}M^{-1}=\begin{pmatrix}
    		\mathrm{i}z&x-\mathrm{i}y\\x+\mathrm{i}y&-\mathrm{i}z
    		\end{pmatrix}.
    	\end{equation}
 Assume that the two eigenvalues of $ A $ are $ e^{\pm\mathrm{i}2\pi\xi} $, let 
 $$ 
 \Phi:=\{n\in\Z^d\backslash\{0\}:|e^{\mathrm{i}2\pi(\langle n,\alpha \rangle\pm2\xi)}-1|\geq\eta, |e^{\mathrm{i}2\pi\langle n,\alpha\rangle}-1|\geq \eta\} 
 $$ and $\Phi^c:\Z^d-\Phi$. Define the following subspace, 
 \begin{align*}
    \widetilde{G}_{r}^{\nu\,(nre)}(\eta)&=\left\lbrace \left.\begin{pmatrix}
 \mathrm{i}\mathcal{S}_{\Phi}f&\mathcal{S}_{\Phi} g\\
 \mathcal{S}_{\Phi} \bar{g}&-\mathrm{i}\mathcal{S}_{\Phi}f
 \end{pmatrix}
 \right\vert f\in G^\nu_r(\mathbb{T}^d,\R), g\in G^\nu_r(\mathbb{T}^d,\mathbb{C})\right\rbrace,\\
 \widetilde{G}_{r}^{\nu\,(re)}(\eta)&=\left\lbrace \left.\begin{pmatrix}
 \mathrm{i}\mathcal{S}_{\Phi^c}f&\mathcal{S}_{\Phi^c} g\\
 \mathcal{S}_{\Phi^c} \bar{g}&-\mathrm{i}\mathcal{S}_{\Phi^c}f
 \end{pmatrix}
 \right\vert f\in G^\nu_r(\mathbb{T}^d,\R), g\in G^\nu_r(\mathbb{T}^d,\mathbb{C})\right\rbrace,
\end{align*} 
and also define $G_{r}^{\nu\,(nre)}(\eta)=M^{-1}\widetilde{G}_{r}^{\nu\,(nre)}(\eta)M$, $G_{r}^{\nu\,(re)}(\eta)=M^{-1}\widetilde{G}_{r}^{\nu\,(re)}(\eta)M$, we call $ G_{r}^{\nu\,(nre)}(\eta) $ the non-resonant space and $ G_{r}^{\nu\,(re)}(\eta) $ the resonant space, all these subspaces above are Banach space. It follows that 
\begin{align*}
    G^\nu_r(\mathbb{T}^d,\mathrm{su(1,1)})&=\widetilde{G}_{r}^{\nu\,(re)}(\eta)\oplus \widetilde{G}_{r}^{\nu\,(nre)}(\eta),\\
    G^\nu_r(\mathbb{T}^d,\mathrm{sl(2,\R)})&=G_{r}^{\nu\,(re)}(\eta)\oplus G_{r}^{\nu\,(nre)}(\eta).
\end{align*}
Under this decomposition, we have the following lemma:
\begin{lemma}\label{lem3.1}
	Assume that $ A\in \mathrm{SL(2,\R)} $, there exist numerical constants $ c_1,D_1 $ such that if 
    $$ \epsilon\leq c_1 \lVert A\rVert^{-D_1}\ \text{and}\ \ \eta\geq \|A\|^2\epsilon^{\frac{1}{9}},$$
     then for any $ f\in  G_r^\nu(\mathbb{T}^d,\mathrm{sl(2,\R)}) $ with $ |f|_{\Lambda,r}\leq\epsilon $, there exist $ Y\in G_{r}^{\nu\,(nre)}(\eta) $ and $ f^{(re)}\in G_{r}^{\nu\,(re)}(\eta) $ such that 
     \[ 
         e^{-Y(\theta+\alpha)}(Ae^{f(\theta)})e^{Y(\theta)}=Ae^{f^{(re)}(\theta)},
    \]
    with $ |Y|_{r}\leq \epsilon^{\frac{1}{2}} $, and $ |f^{(re)}|_{r}\leq 2\epsilon $.
\end{lemma}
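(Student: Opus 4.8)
The plan is to solve the conjugacy equation by a single Newton-type iteration step (one homological equation), exploiting the fact that, in the $\mathrm{su}(1,1)$ model, the non-resonant block can be killed with a bounded solution. Passing through the isomorphism $M$ of \eqref{isom}, it suffices to work with $\widetilde f = MfM^{-1}\in G^\nu_r(\T^d,\mathrm{su}(1,1))$ and to produce $\widetilde Y\in \widetilde G_r^{\nu\,(nre)}(\eta)$ solving, to leading order,
\[
\widetilde A\, \widetilde Y(\theta)\, \widetilde A^{-1} - \widetilde Y(\theta+\alpha) = -\widetilde A\, (\mathcal S_\Phi \widetilde f)(\theta)\, \widetilde A^{-1},
\]
where $\widetilde A = MAM^{-1}$ can be taken diagonal with entries $e^{\pm\mathrm{i}2\pi\xi}$ (after a bounded constant conjugation costing a factor $\|A\|^{D_1}$, which is the source of the $c_1\|A\|^{-D_1}$ condition). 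Writing $\widetilde f$ in the $(f,g)$-coordinates, the diagonal entries $\mathrm{i}\mathcal S_\Phi f$ are governed by the small divisors $e^{\mathrm{i}2\pi\langle n,\alpha\rangle}-1$ and the off-diagonal entries $\mathcal S_\Phi g$ by $e^{\mathrm{i}2\pi(\langle n,\alpha\rangle + 2\xi)}-1$ (and its conjugate); on $\Phi$ all of these have modulus $\geq\eta$, so the Fourier coefficients of $\widetilde Y$ are obtained by dividing those of $\mathcal S_\Phi\widetilde f$ by quantities of size $\geq\eta$. This gives the Gevrey bound $|\widetilde Y|_r \leq C\eta^{-1}|\mathcal S_\Phi\widetilde f|_r \leq C\eta^{-1}\epsilon$ with no loss of width, since division by bounded-below divisors does not touch the weights $e^{r|2\pi k|^\nu}$.

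Next I would run the fixed-point/Newton correction to upgrade this linear solution to an exact one. Define the map $\mathcal F(Y)$ by $e^{-Y(\theta+\alpha)}(Ae^{f(\theta)})e^{Y(\theta)} = A e^{\mathcal F(Y)(\theta)}$; expanding the exponentials and using $e^{-Y(\theta+\alpha)}Ae^{Y(\theta)} = A\bigl(I + A^{-1}Y(\theta)A - Y(\theta+\alpha) + O(|Y|^2)\bigr)$, one checks that the resonant part $\mathcal S_{\Phi^c}\mathcal F(Y)$ is, to first order, just $\mathcal S_{\Phi^c}f$, while the non-resonant part can be made to vanish by solving $\widetilde A\widetilde Y\widetilde A^{-1} - \widetilde Y(\cdot+\alpha) = -(\text{non-resonant part of the nonlinearity})$ iteratively. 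Because the homological operator is boundedly invertible on the non-resonant space with norm $\lesssim\eta^{-1}$, and the nonlinearity is quadratically small ($\lesssim|Y|_r^2 + |Y|_r|f|_r$), a standard contraction argument in the ball $\{|Y|_r\leq\epsilon^{1/2}\}$ of $G_r^{\nu\,(nre)}(\eta)$ converges provided $\eta^{-1}\epsilon \ll \epsilon^{1/2}$, i.e. $\eta \gg \epsilon^{1/2}$; the hypothesis $\eta\geq\|A\|^2\epsilon^{1/9}$ (together with $\epsilon$ small) comfortably ensures this and also absorbs the $\|A\|$-dependent constants from the diagonalization of $A$ and from the norm of the conjugating matrix. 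Tracking constants, one gets $|Y|_r\leq\epsilon^{1/2}$ and $|f^{(re)}|_r = |\mathcal S_{\Phi^c}\mathcal F(Y)|_r \leq |f|_r + (\text{quadratic error}) \leq 2\epsilon$.

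The main obstacle, and the place requiring genuine care rather than routine computation, is the interplay between the $\|A\|$-dependence and the Gevrey norms. After conjugating $A$ to diagonal form the eigenvectors of $A$ may be badly conditioned, so the conjugating constant matrix has norm a power of $\|A\|$; this inflates $\epsilon$ to $\|A\|^{D_1}\epsilon$ in the homological estimates, and one must verify that the stated thresholds $\epsilon\leq c_1\|A\|^{-D_1}$ and $\eta\geq\|A\|^2\epsilon^{1/9}$ are exactly what is needed for the contraction to close with the clean final bounds $\epsilon^{1/2}$ and $2\epsilon$. A secondary subtlety is confirming that the exponents $1/9$ and $1/2$ are consistent through the iteration — that is, that the geometric series of quadratic corrections, each gaining a factor $\eta^{-1}\epsilon^{1/2}\lesssim\|A\|^{-2}\epsilon^{1/2-1/9}$, sums to something still bounded by $\epsilon^{1/2}$ — which is a matter of bookkeeping but must be done honestly. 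Everything else (the splitting $G^\nu_r = G^{\nu\,(re)}_r\oplus G^{\nu\,(nre)}_r$, the boundedness of $M$, the no-loss-of-width property of division by divisors $\geq\eta$, and the elementary Lie-algebra expansions) is standard and can be cited or dispatched quickly.
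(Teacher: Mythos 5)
Your overall architecture (solve the non-resonant homological equation, then close a Newton/contraction or implicit-function argument to get $|Y|_r\leq\epsilon^{1/2}$, $|f^{(re)}|_r\leq 2\epsilon$) is the same in spirit as the paper, which phrases the fixed point via the functional $\Psi(f,Y)=\mathbb{P}_{nre}\bigl[\ln\bigl(e^{-A^{-1}Y(\theta+\alpha)A}e^{f(\theta)}e^{Y(\theta)}\bigr)\bigr]$ and a quantitative Implicit Function Theorem. But your first step contains a genuine gap: you reduce to a diagonal $\widetilde A$ ``after a bounded constant conjugation costing a factor $\|A\|^{D_1}$''. For a general $A\in\mathrm{SL}(2,\R)$ this is false. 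If $A$ is elliptic with rotation angle $\xi$ close to $0$ or $\tfrac12$, the diagonalizing matrix has norm of order $\sqrt{\|A\|/|\sin 2\pi\xi|}$, which is not controlled by any power of $\|A\|$ (and if $A$ is parabolic, no diagonalization exists at all). Lemma \ref{lem3.1} is stated for arbitrary $A$ with smallness depending only on $\|A\|$; there is no lower bound on $|2\xi|_\T$ available at this stage. Such a bound only appears later, in the resonant case of Proposition \ref{prop}, where $|2\xi|_\T\geq\gamma/(2|n_*|^\tau)$ is extracted from the Diophantine condition before diagonalizing --- precisely because diagonalization cannot be afforded in Lemma \ref{lem3.1}. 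Consequently your key estimate $|\widetilde Y|_r\leq C\eta^{-1}\epsilon$, and the contraction condition ``$\eta\gg\epsilon^{1/2}$'' built on it, do not survive for the class of $A$ covered by the lemma.

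The paper avoids this by never diagonalizing: it proves directly (in analogy with Lemma~1 of Eliasson) that the linearized operator $Y'\mapsto \mathbb{P}_{nre}\{-A^{-1}Y'(\cdot+\alpha)A+Y'\}$ is bounded below on the non-resonant space by $\tilde\eta=\eta^{3}/(\eta^{2}+3\|A\|^{2}\eta+2\|A\|^{4})$, i.e.\ the inverse costs $\sim\|A\|^{4}\eta^{-3}$ rather than $\eta^{-1}$. The hypothesis $\eta\geq\|A\|^{2}\epsilon^{1/9}$ is calibrated exactly to this cubic loss (it gives $\tilde\eta\geq\tfrac19\|A\|^{2}\epsilon^{1/3}$, hence $\tilde\eta^{-1}\epsilon\leq\epsilon^{1/2}$), which also explains the otherwise mysterious exponent $1/9$ that your scheme, as written, gives no reason for. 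To repair your argument you would either have to reprove this $\eta^{3}/\|A\|^{4}$-type lower bound for the non-diagonalized operator (at which point you are doing the paper's proof), or restrict to $A$ whose eigenbasis is well conditioned, which is weaker than the statement. A secondary, fixable issue is that after conjugating by a diagonalizing $P$ the entrywise projections defining $G_r^{\nu\,(nre)}(\eta)$ are not preserved, so the $Y$ you build would not automatically lie in the prescribed non-resonant space.
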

\begin{remark}
The analytic version of this result has appeared in \cite{hou,Cai2017SharpHC}. The proof of this lemma is based on a quantitative Implicit Function Theorem, 
and will be given in the Appendix \ref{appendixA} for completeness.
\end{remark}

\section{Quantitative almost reducibility}
\subsection{The inductive step}
Consider the following quasi-periodic $\mathrm{SL}(2,\R)$ cocycle \[\begin{pmatrix}
u_{n+1}\\u_n
\end{pmatrix}=(Ae^{f(\theta+n\alpha)})\begin{pmatrix}
u_n\\u_{n-1}
\end{pmatrix},\] 
with $\alpha\in\mathrm{DC}_d(\gamma,\tau)$, $ A\in\mathrm{SL}(2,\R) $, $ f\in G^{\nu}_r(\mathbb{T}^d,\mathrm{sl(2,\R)}) $, for some $0<\nu<1$, denote it by $(\alpha,Ae^{f})$. Then we have
\begin{proposition}\label{prop}
	Let $\alpha\in\mathrm{DC}_d(\gamma,\tau)$, $\sigma=\frac{1}{20}$. Given $r>0$, 
    then for any $r_+\in(0,r)$, there exist $c=c(\gamma,\tau,d,\nu)>0 $ and a numerical constant $ D $ such that if 
    \begin{equation}\label{epsilon}
        |f|_r<\epsilon\leq c\lVert A \rVert^{-D}(r-r_+)^{D\nu\tau}, 
    \end{equation}
    then there exist $ B(\theta)\in G^{\nu}_{r_+}(2\mathbb{T}^d,\mathrm{SL(2,\R)}) $, $ A_+\in\mathrm{SL(2,\R)} $, $ f_+\in G^{\nu}_{r_+}(\mathbb{T}^d,\mathrm{sl(2,\R)}) $ such that
    $$
        B(\theta+\alpha)^{-1}Ae^{f(\theta)}B(\theta)=A_+e^{f_+(\theta)}.
    $$ 
    More precisely, assume the two eigenvalues of $ A $ are $ e^{\pm\mathrm{i}2\pi\xi} $, and let $ N:=\frac{1}{2\pi}\left( \frac{2|\ln\epsilon|}{r-r_+}\right)^{\frac{1}{\nu}}$, we have the following: 
    \begin{enumerate}[label=$\mathrm{\Roman*}$\ ,align=left,widest=ii,labelsep=0pt,leftmargin=0.5em]
		\item $\mathrm{(Non}$-$\mathrm{resonant\ case)}$ If for any $ n\in\Z^d $ with $0<|n|\leq N $,
        \begin{equation}\label{non}
		|2\xi-\langle n,\alpha\rangle|_{\T}\geq \epsilon^{\sigma},\ 
		\end{equation} 
        where $ |\cdot|_{\T}=\inf_{j\in\Z}|\cdot-j| $, then we have \[ \ \|A_+-A\|\leq 4\|A\|\epsilon,\ |f_+|_{r_+}\leq \epsilon^{2},\ |B-\mathrm{Id}|_{r_+}\leq 2\epsilon^{\frac{1}{2}}. \]
		\item $(\mathrm{Resonant\ case})$ If there exists $ n_*\in\Z^d $ with $ 0<|n_*|\leq N $ such that
        \begin{equation}\label{res}
		|2\xi-\langle n_*,\alpha\rangle|_{\T}<\epsilon^{\sigma},
		\end{equation} then we have $A_+=e^{a_+}$ with $\|a_+\|\leq 4\epsilon^{\sigma}$, and
        \begin{align*}
            &|f_+|_{r_+}\leq\epsilon^{\frac{3}{4}-\frac{2 r_+}{r-r_+}} e^{-(r-r_+) \epsilon^{-\frac{10\nu}{11}\frac{\sigma}{\tau}}}\ll\epsilon^{2}, \\&|B|_{r''}\leq 32 \gamma^{-\frac{1}{2}}\|A\|^{\frac{1}{2}} |n_*|^{\frac{\tau}{2}}e^{r''(\pi |n_*|)^{\nu}},\ \forall r''\in(0, r_+],
        \end{align*}
		and $\deg B=n_*$. Moreover, let $ a_+=M^{-1}\begin{pmatrix}
            \mathrm{i}t_+ & \mu_+\\
            \overline{\mu_+} & -\mathrm{i}t_+   
        \end{pmatrix}M $, we have
        \begin{equation} \label{3.4}
            |t_+|\leq \epsilon^{\sigma}, |\mu_+|\leq \epsilon^{\frac{3}{4}}e^{-r|2\pi n_*|^{\nu}}.
        \end{equation} 
	\end{enumerate}
\end{proposition}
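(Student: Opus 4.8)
The plan is a single renormalization step in the spirit of Hou--You \cite{hou} and Leguil--You--Zhao--Zhou \cite{leguil2017asymptotics}, transcribed to the Gevrey norm $|\cdot|_r$; the only structural novelty over the analytic case is that one systematically exploits the subadditivity $|k_1+k_2|^\nu\le|k_1|^\nu+|k_2|^\nu$ to control how Fourier truncations, products and shifts act on $G^\nu_r$. First I would apply Lemma \ref{lem3.1} with $\eta\asymp\epsilon^{\sigma}$ --- admissible because $\sigma=\tfrac1{20}<\tfrac19$ and $\epsilon\le c\|A\|^{-D}(r-r_+)^{D\nu\tau}$ --- obtaining $Y$ with $|Y|_r\le\epsilon^{1/2}$ and $f^{(re)}\in G^{\nu\,(re)}_r(\eta)$ with $|f^{(re)}|_r\le2\epsilon$ and $e^{-Y(\theta+\alpha)}Ae^{f(\theta)}e^{Y(\theta)}=Ae^{f^{(re)}(\theta)}$; it then remains to treat $(\alpha,Ae^{f^{(re)}})$, whose perturbation is carried only by $\Phi^c$. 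With the eigenvalues of $A$ written $e^{\pm\mi2\pi\xi}$, the Diophantine condition gives $|\langle n,\alpha\rangle|_\T\ge\gamma|n|^{-\tau}\ge\gamma N^{-\tau}\gg\epsilon^{\sigma}$ for $0<|n|\le N$ (since $N$ is merely polynomial in $|\ln\epsilon|$), from which $\Phi^c\cap\{|n|\le N\}=\{0\}$ in case I and $=\{0,\pm n_*\}$ in case II, with the resonant $n_*$ \emph{unique} among $|n|\le N$ (two of them would contradict the Diophantine lower bound for $\langle n_*-n_{**},\alpha\rangle$).

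In case I, $f^{(re)}=\widehat{f^{(re)}}(0)+(\text{tail on }|n|>N)$, and the very choice $N=\tfrac1{2\pi}\big(\tfrac{2|\ln\epsilon|}{r-r_+}\big)^{1/\nu}$ makes the tail have $|\cdot|_{r_+}$-norm $\le e^{-(r-r_+)(2\pi N)^\nu}|f^{(re)}|_r=\epsilon^{2}|f^{(re)}|_r\le2\epsilon^{3}$. I set $A_+:=Ae^{\widehat{f^{(re)}}(0)}$, $B:=e^{Y}$, and define $f_+$ by $e^{f_+}=e^{-\widehat{f^{(re)}}(0)}e^{f^{(re)}}$ via Baker--Campbell--Hausdorff; then $\|A_+-A\|\le4\|A\|\epsilon$, $|f_+|_{r_+}\le\epsilon^{2}$, $|B-\mathrm{Id}|_{r_+}\le2\epsilon^{1/2}$ and $\deg B=0$, which is the non-resonant alternative.

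Case II is the substantial one. I would first conjugate by the constant $P$ putting $A$ into its $\mathrm{su}(1,1)$ normal form $\Lambda=\mathrm{diag}(e^{\mi2\pi\xi},e^{-\mi2\pi\xi})$ of (\ref{isom}): the resonance $|2\xi-\langle n_*,\alpha\rangle|_\T<\epsilon^{\sigma}$ forces $\xi\in\R$, and together with $|\langle n_*,\alpha\rangle|_\T\ge\gamma|n_*|^{-\tau}$ it keeps $\xi$ a Diophantine distance from $\tfrac12\Z$, whence $\|P^{\pm1}\|\lesssim\gamma^{-1/2}\|A\|^{1/2}|n_*|^{\tau/2}$ --- exactly the factor appearing in the estimate for $B$. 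I then conjugate by the degree-$n_*$ map $Z(\theta)=\mathrm{diag}(e^{\mi\pi\langle n_*,\theta\rangle},e^{-\mi\pi\langle n_*,\theta\rangle})$, for which $|Z|_{r''}\lesssim e^{r''(\pi|n_*|)^\nu}$; a direct computation gives $Z(\theta+\alpha)^{-1}\Lambda Z(\theta)=\mathrm{diag}(e^{\mi\pi(2\xi-\langle n_*,\alpha\rangle)},\cdot)$, of rotation number within $\tfrac12\epsilon^{\sigma}$ of $0\bmod\tfrac12$, while $Z^{-1}(\cdot)Z$ turns the two resonant off-diagonal entries of the $\pm n_*$-modes of $f^{(re)}$ into \emph{constants} (each bounded by $|f^{(re)}|_r e^{-r|2\pi n_*|^\nu}\le2\epsilon\,e^{-r|2\pi n_*|^\nu}$) and multiplies every remaining entry by $e^{\pm\mi2\pi\langle n_*,\theta\rangle}$. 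Absorbing the new zero mode produces $A_+=e^{a_+}$ with $\|a_+\|\le4\epsilon^{\sigma}$, $|t_+|\le\epsilon^{\sigma}$, $|\mu_+|\le\epsilon^{3/4}e^{-r|2\pi n_*|^\nu}$ as in (\ref{3.4}); the leftover perturbation $f^{(1)}$ (all nonzero modes), estimated with the width kept just below $r$, satisfies $|f^{(1)}|_r\le\epsilon^{-C}$ for some $C=C(r,r_+,\tau,\nu)$, the $\epsilon^{-C}$ collecting the factors $\|P^{\pm1}\|^2$ and $e^{O(r)(2\pi|n_*|)^\nu}$ produced by the shifts ($|n_*|\le N$, $\|A\|\lesssim\epsilon^{-1/D}$). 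Finally, since $A_+$ is $\epsilon^{\sigma}$-close to the identity, $|\langle m,\alpha\rangle\pm2\xi_+|_\T\ge\gamma|m|^{-\tau}-|2\xi_+|_\T\ge\epsilon^{\sigma}$ for all $0<|m|\le N'$ whenever $N'$ is taken just below $(\gamma/3\epsilon^{\sigma})^{1/\tau}$, so there are \emph{no} small divisors up to $N'$ --- and $N'$ is now a genuine power of $\epsilon^{-1}$, not of $|\ln\epsilon|$. A second application of Lemma \ref{lem3.1} removes all modes $0<|m|\le N'$ of $f^{(1)}$, leaving a tail whose $|\cdot|_{r_+}$-norm is at most $e^{-\frac12(r-r_+)(2\pi N')^\nu}|f^{(1)}|_r\lesssim\epsilon^{-C}e^{-\frac12(r-r_+)(2\pi N')^\nu}$ (the factor $\tfrac12$ is the width margin kept through the rotation step); as $(2\pi N')^\nu\asymp\epsilon^{-\nu\sigma/\tau}$ the super-exponential term absorbs $\epsilon^{-C}$ and dominates the claimed $\epsilon^{3/4-\frac{2r_+}{r-r_+}}e^{-(r-r_+)\epsilon^{-\frac{10\nu}{11}\frac{\sigma}{\tau}}}\ll\epsilon^{2}$. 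Collecting the conjugacies, $B=e^{Y}PZe^{Y'}$ has $\deg B=n_*$ and $|B|_{r''}\le32\gamma^{-1/2}\|A\|^{1/2}|n_*|^{\tau/2}e^{r''(\pi|n_*|)^\nu}$ for all $r''\in(0,r_+]$, since every factor but $P$ and $Z$ is $\mathrm{Id}+O(\epsilon^{1/2})$.

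The genuine obstacle is the Gevrey bookkeeping in case II: a degree-$n_*$ conjugation does not merely shrink an analyticity strip, it multiplies $|\cdot|_r$ by $e^{r(2\pi|n_*|)^\nu}$, and the accompanying shift by $\pm n_*$ can \emph{unfold} high modes $|n|>N$ into low ones whose coefficients are only $\le2\epsilon\,e^{-(r-r')|2\pi n|^\nu}$ after a width loss $r-r'$. One must therefore avoid dropping the width to $r_+$ before the final elimination, and then verify that $N'\asymp\epsilon^{-\sigma/\tau}$ is simultaneously large enough to force a super-exponential gain strong enough to cover all prefactors and small enough to stay below the first small divisor $(\gamma/\epsilon^{\sigma})^{1/\tau}$; carrying this through consistently with the exponents $\sigma=\tfrac1{20}$, the $\tfrac19$ of Lemma \ref{lem3.1}, and the smallness $\epsilon\le c\|A\|^{-D}(r-r_+)^{D\nu\tau}$ is where the real effort lies, the remainder being routine Neumann-series and BCH estimates.
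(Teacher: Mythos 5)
Your non-resonant case is essentially the paper's argument and is fine. The gap is in the resonant case, at the point where you invoke ``a second application of Lemma~\ref{lem3.1}'' to remove the modes $0<|m|\le N'$ of $f^{(1)}$. By your own accounting $|f^{(1)}|_{r}\le\epsilon^{-C}$ with $C=C(r,r_+,\tau,\nu)$ large when $r_+$ is close to $r$, because conjugation by the degree-$n_*$ rotation costs a factor $e^{2r''(\pi|n_*|)^\nu}\sim\epsilon^{-\mathrm{const}\cdot r''/(r-r_+)}$ when $|n_*|\sim N$. But Lemma~\ref{lem3.1} is an implicit-function-theorem statement: it needs the input perturbation to be smaller than a fixed power of $\|A\|^{-1}$ at the working width, and it returns a conjugacy of size (input)$^{1/2}$. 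With input $\epsilon^{-C}\gg1$ its hypotheses fail, and your final claims that $e^{Y'}=\mathrm{Id}+O(\epsilon^{1/2})$ and hence $|B|_{r''}\le32\gamma^{-1/2}\|A\|^{1/2}|n_*|^{\tau/2}e^{r''(\pi|n_*|)^\nu}$ are unjustified. Shrinking the width does not rescue this: $|f^{(1)}|_{r''}$ becomes small only for $r''\lesssim r-r_+$, which is far below $r_+$ in the relevant regime, and the Proposition requires the output at width $r_+$. Moreover the second elimination is forced on you by a bookkeeping oversight: with the coarse decomposition of Section~3 the resonant space keeps, at the sites $\pm n_*$, \emph{both} the diagonal and the off-diagonal components; after conjugation by $Z$ only the $(1,2)$-mode at $+n_*$ and the $(2,1)$-mode at $-n_*$ become constants, while the diagonal $\pm n_*$ modes stay put and the $(1,2)$-mode at $-n_*$ moves to $-2n_*$. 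These are genuine non-constant low-frequency terms of size up to $\sim\epsilon^{8/9}$, far too large for $f_+$, so your statement that after absorbing the new zero mode the leftover consists only of harmless high modes is not correct as written.

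The missing idea is the one the paper uses to avoid any second elimination. Conjugate by $P$ \emph{first}, and run the elimination with the decomposition refined according to the diagonal/off-diagonal structure of $\tilde A$: $\Phi_1$ (divisor $\langle n,\alpha\rangle$) for the diagonal and $\Phi_2$ (divisor $2\xi-\langle n,\alpha\rangle$) for the $(1,2)$-entry, i.e.\ Lemma~\ref{lem2.2}. Then the Diophantine condition yields Claim~\ref{claim}: within $|n|\le N_2\approx 2^{-1/\tau}\gamma^{1/\tau}\epsilon^{-\sigma/\tau}$ the only resonant sites are $0$ (diagonal) and $n_*$ (off-diagonal) --- this is separation at the scale $\epsilon^{-\sigma/\tau}$, which is strictly stronger than the uniqueness of $n_*$ among $|n|\le N$ that you assert, and it is exactly what makes the scheme close. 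Consequently, after rotating by $Z$, the constant part goes into $a_+$ (giving (\ref{3.4})) and the non-constant remainder is supported on modes beyond $N_2-N$, so its $|\cdot|_{r_+}$-norm is already $\le 2\tilde\epsilon\, e^{-(r-r_+)(2\pi N_2)^\nu}$; this super-exponential factor absorbs the rotation loss $e^{2r_+(\pi|n_*|)^\nu}$ and gives $|f_+|_{r_+}\le\epsilon^{3/4-\frac{2r_+}{r-r_+}}e^{-(r-r_+)\epsilon^{-\frac{10\nu}{11}\frac{\sigma}{\tau}}}$ directly, with $B=P\,e^{Y}Z$ of the stated size and degree. Your closing paragraph correctly identifies the unfolding of modes under the degree-$n_*$ conjugation as the real obstacle, but the two-step elimination you propose does not overcome it; the refined resonant space together with the Claim is what does.
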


    \begin{proof}
        For $\mathrm{I}$. Let $\eta=2\|A\|^2\epsilon^{\frac{1}{9}}$, then we have the corresponding decomposition, by (\ref{epsilon}), we can apply Lemma \ref{lem3.1}, then $ (\alpha,Ae^f) $ is conjugated to a cocycle $ (\alpha,Ae^g) $ with $ g\in G_{r}^{\nu\,(re)}(\eta) $. 

        The fact $ \alpha\in\mathrm{DC}_d(\gamma,\tau) $ and (\ref{epsilon}) can imply \begin{equation}\label{diophantine}
            |e^{\mathrm{i}2\pi\langle n,\alpha\rangle}-1|\geq\frac{\gamma}{N^\tau}\geq\epsilon^{\sigma}\geq\eta,\qquad \forall n\in\Z^d \text{ with } 0<|n|\leq N.	
            \end{equation} 
            Thus combining (\ref{non}) with (\ref{diophantine}), by the definition of $ G_{r}^{\nu\,(re)}(\eta) $, we have
            $$ \mathcal{T}_N g(\theta)-\widehat{g}(0)=0,$$  
            hence  
            \begin{align*}
                |g-\widehat{g}(0)|_{r_+}=\sum_{|n|>N}|\widehat{g}(n)|e^{  r_+ (2\pi |n|)^{\nu}}
                \leq |g|_r e^{-(r-r_+)(2\pi N)^{\nu}}\leq 2\epsilon^{3}.   
            \end{align*}
            Let $ B:=e^Y$, $A_+:=Ae^{\widehat{g}(0)} $, and $ e^{f_+}:=e^{-\widehat{g}(0)}e^{g} $. Clearly, we have the corresponding estimates.

        For $ \mathrm{II} $. Since $ \alpha\in\mathrm{DC}_d(\gamma,\tau) $, thus we only need to consider the case in which $ A $ is elliptic, i.e., $ \xi\in\R\backslash\{0\} $. In fact, if $\xi\in\mathrm{i}\R$, we must have (\ref{non}) holds. 
        \begin{lemma}
            For $ A $ is elliptic, there exists $P\in\mathrm{SL}(2,\R) $  such that 
            $$
            A=P^{-1}M^{-1}\begin{pmatrix}
                e^{\mathrm{i}2\pi\xi}&0\\0&e^{-\mathrm{i}2\pi\xi}
                \end{pmatrix}MP
            $$  with $\|P\|\leq 16\max\{\frac{1}{2},\sqrt{\frac{\|A\|}{2|\sin2\pi\xi|}}\}$.
        \end{lemma} 
        \begin{proof}
            By Schur's Lemma, there exists a unitary matrix $U$ such that 
            \[A=e^{a}=e^{U\begin{pmatrix}
                \mathrm{i}2\pi\xi&p\\0&-\mathrm{i}2\pi\xi
                \end{pmatrix}U^*}=U\begin{pmatrix}
                e^{\mathrm{i}2\pi\xi}&\frac{\sin2\pi\xi}{2\pi\xi}p\\0&e^{-\mathrm{i}2\pi\xi}
                \end{pmatrix}U^*.
            \]
            Notice that $ \lVert U\rVert\leq 2 $, we have $ \lVert a \rVert\leq 8\max\{|2\pi\xi|,|p|\} $, and $ |\frac{\sin 2\pi\xi}{2\pi\xi}||p|\leq 4\lVert A\rVert $, then by Lemma 8.1 in \cite{hou}, there exists $ P\in\mathrm{SL}(2,\R) $ with $ \lVert P\rVert\leq 2\sqrt{\frac{\lVert a\rVert}{|2\pi\xi|}} $ such that $ P a P^{-1}=\begin{pmatrix}
                0&2\pi\xi\\-2\pi\xi&0
            \end{pmatrix} $, then we have
            $$
                PAP^{-1}=M^{-1}\begin{pmatrix}
                    e^{\mathrm{i}2\pi\xi}&0\\0&e^{-\mathrm{i}2\pi\xi}
                \end{pmatrix}M
            $$ 
            with $ \lVert P\rVert\leq 16\max\{\frac{1}{2},\sqrt{\frac{\lVert A\rVert}{2|\sin2\pi\xi|}}\} $.                
        \end{proof}

        Let $ \tilde{A}:=M^{-1}\begin{pmatrix}
        e^{\mathrm{i}2\pi\xi}&0\\0&e^{-\mathrm{i}2\pi\xi}
        \end{pmatrix}M, \tilde{f}:=PfP^{-1} $, by (\ref{epsilon}) and (\ref{res}),
        \[ 
            |2\xi|_{\T}\geq |\langle  n_*,\alpha\rangle|_{\T}-\epsilon^\sigma\geq\frac{\gamma}{|n_*|^\tau}-\epsilon^\sigma\geq\frac{\gamma}{2|n_*|^\tau},
        \]  
        then by $ |\sin2\pi\xi|\geq \frac{2}{\pi}|2\xi|_{\T} $, we have
        \[
            |\tilde{f}|_r \leq \|P\|^2\epsilon:=\tilde{\epsilon}\leq 256\max\{\frac{1}{4},\gamma^{-1}\|A\||n_*|^{\tau}\}\epsilon\leq 256\gamma^{-1}\|A\||n_*|^{\tau}\epsilon\leq\epsilon^{\frac{8}{9}}.
        \]
        By the concrete algebraic structure of $\tilde{A}$, if we set 
        $$ 
        \begin{aligned}
            &\Phi_1:=\{n\in\Z^d:|e^{\mathrm{i}2\pi\langle n,\alpha\rangle}-1|\geq \eta\},\\
            &\Phi_2:=\{n\in\Z^d:|e^{\mathrm{i}2\pi(2\xi-\langle n,\alpha\rangle)}-1|\geq \eta\},
        \end{aligned}
        $$ 
        and redefine
     \begin{align*}
        \widetilde{G}_{r}^{\nu\,(nre)}(\eta)&=\left\lbrace \left.\begin{pmatrix}
     \mathrm{i}\mathcal{S}_{\Phi_1}f&\mathcal{S}_{\Phi_2} g\\
     \mathcal{S}_{-\Phi_2} \bar{g}&-\mathrm{i}\mathcal{S}_{\Phi_1}f
     \end{pmatrix}
     \right\vert f\in G^{\nu}_{r}(\mathbb{T}^d,\R), g\in G^{\nu}_{r}(\mathbb{T}^d,\mathbb{C})\right\rbrace,\\
     \widetilde{G}_{r}^{\nu\,(re)}(\eta)&=\left\lbrace \left.\begin{pmatrix}
     \mathrm{i}\mathcal{S}_{\Phi_1^c}f&\mathcal{S}_{\Phi_2^c} g\\
     \mathcal{S}_{-\Phi_2^c} \bar{g}&-\mathrm{i}\mathcal{S}_{\Phi_1^c}f
     \end{pmatrix}
     \right\vert f\in G^{\nu}_{r}(\mathbb{T}^d,\R), g\in G^{\nu}_{r}(\mathbb{T}^d,\mathbb{C})\right\rbrace,
    \end{align*} 
    $G_{r}^{\nu\,(nre)}(\eta)$, $G_{r}^{\nu\,(re)}(\eta)$ are redefined similar, we get the corresponding decomposition. Then we have the following which is similar with Lemma \ref{lem3.1}.
    \begin{lemma}\label{lem2.2}
        Assume $ A\in \mathrm{SO(2,\R)} $, there exist numerical constants $ c_2,D_2 $ such that if 
        $$ \epsilon\leq c_2\lVert A\rVert^{-D_2} \ \text{and}\ \ \eta\geq \|A\|^2\epsilon^{\frac{1}{3}},$$
        then for any $ f\in  G^{\nu}_{r}(\mathbb{T}^d,\mathrm{sl(2,\R)}) $ with $ |f|_{r}\leq\epsilon $, there exist $ Y\in G_{r}^{\nu\,(nre)}(\eta) $ and $ f^{(re)}\in G_{r}^{\nu\,(re)}(\eta) $ such that \[ e^{-Y(\theta+\alpha)}(Ae^{f(\theta)})e^{Y(\theta)}=Ae^{f^{(re)}(\theta)},\]with $ |Y|_{r}\leq \epsilon^{\frac{1}{2}} $, and $ |f^{(re)}|_{r}\leq 2\epsilon $.
    \end{lemma}
    
    \begin{proof}
        The only difference between Lemma \ref{lem3.1} and Lemma \ref{lem2.2} is that if $A\in SO(2,\R)$, for any $Y\in\mathrm{sl}(2,\R) $, $A^{-1}Y(\cdot+\alpha)A-Y(\cdot)$ still belongs to $\mathrm{sl}(2,\R)$. By the definition of redefined non-resonant space, one can easily check that 
        \[|A^{-1}Y'(\theta+\alpha)A-Y'(\theta)|_r\geq\eta|Y'|_r.\]The rest follows a same line as Lemma \ref{lem2.2}.
    \end{proof}
    
        Let $ \eta:=2\|\tilde{A}\|^2\tilde{\epsilon}^\frac{1}{3} $, then one can apply Lemma \ref{lem2.2}, thus $(\alpha,\tilde{A}e^{\tilde{f}})$ can be conjugated to $(\alpha,\tilde{A}e^g)$ with $g\in  G_{r}^{\nu\,(re)}(\eta)$ by a conjugacy $e^Y$,  and $|Y|_r\leq \tilde{\epsilon}^{\frac{1}{2}} $, $|g|_r\leq 2\tilde{\epsilon}.$
       By the definition of $G_{r}^{\nu\,(re)}(\eta)$, there exist $ g_{11}(\theta)\in G^\nu_{r}(\mathbb{T}^d,\R) $, $ g_{12}(\theta)\in G^\nu_{r}(\mathbb{T}^d,\mathbb{C}) $ such that 
       \begin{align}\label{gre}
       Mg(\theta)M^{-1} &=\begin{pmatrix}
     \mathrm{i}\mathcal{S}_{\Phi_1^c} g_{11}(\theta),\mathcal{S}_{\Phi_2^c} g_{12}(\theta)\\
     \mathcal{S}_{-\Phi_2^c} \overline{g_{12}}(\theta),-\mathrm{i}\mathcal{S}_{\Phi_1^c} g_{11}(\theta)
     \end{pmatrix}\notag\\
     &=\sum_{n\in \Phi_1^c}\begin{pmatrix}
            \mathrm{i} \widehat{g_{11}}(n)&0\\0&-\mathrm{i}\widehat{g_{11}}(n)
            \end{pmatrix}e^{\mathrm{i}2\pi\langle n,\theta\rangle}
            \\&+\sum_{n\in \Phi_2^c}\begin{pmatrix}
            0&\widehat{g_{12}}(n)e^{\mathrm{i}2\pi\langle n,\theta\rangle}\\ \overline{\widehat{g_{12}}(n)}e^{-\mathrm{i}2\pi\langle n,\theta\rangle}&0
            \end{pmatrix}.
       \end{align}

    By the Diophantine condition and (\ref{res}), we have the following observation:
    \begin{claim} \label{claim}
        \begin{equation*}\label{22}
        \Phi_1^c\cap\{n\in\Z^d:|n|\leq N\}=\{0\},\Phi_2^c\cap\{n\in\Z^d:|n|\leq N\}=\{n_*\}.
        \end{equation*}
    \end{claim}
    \begin{proof}
    For any $n\neq0$ and $n\in\Phi_1^c$, we have\[\frac{\gamma}{|n|^\tau}\leq|e^{\mathrm{i}2\pi\langle n,\alpha\rangle}-1|<\eta\leq\epsilon^\sigma, \]thus $|n|> \gamma^{\frac{1}{\tau}}\epsilon^{-\frac{\sigma}{\tau}}.$ For any $ n\in\Phi_2^c $, we have \[|2\xi-\langle n,\alpha\rangle|_{\T}<\frac{1}{4}\eta.\]	 		
    Assume there is an $ n_*'\neq n_* $ with $ |2\xi-\langle n_* ',\alpha\rangle|_{\T} <\frac{1}{4}\eta$, then we have 
    \[ 
        \frac{\gamma}{|n_*'-n_*|^\tau}\leq|e^{\mathrm{i}2\pi\langle n_*'-n_*,\alpha\rangle}-1|\leq 2|\langle n_*'-n_*, \alpha\rangle|_{\T}<2\epsilon^{\sigma}, 
    \]
    which implies $ |n_*'|>2^{-\frac{1}{\tau}}\gamma^{\frac{1}{\tau}}\epsilon^{-\frac{\sigma}{\tau}}-N $.
    Let $ N_1:=\gamma^{\frac{1}{\tau}}\epsilon^{-\frac{\sigma}{\tau}} $, $ N_2:=2^{-\frac{1}{\tau}}\gamma^{\frac{1}{\tau}}\epsilon^{-\frac{\sigma}{\tau}}-N $, then by (\ref{epsilon}), we have $\epsilon^{-\frac{10\sigma}{11\tau}}\leq 2^{-1-\frac{1}{\tau}}\gamma^{\frac{1}{\tau}}\epsilon^{-\frac{\sigma}{\tau}}$ and we get $N_1>N_2\gg N$.
            \end{proof}
    By the Claim \ref{claim}, we have
            \begin{align}
             Mg(\theta)M^{-1}=&
            \begin{pmatrix}
                \mathrm{i} \widehat{g_{11}}(0)&\widehat{g_{12}}(n_*)e^{\mathrm{i}2\pi\langle n_*,\theta\rangle}\\ \overline{\widehat{g_{12}}(n_*)}e^{-\mathrm{i}2\pi\langle n_*,\theta\rangle}&-\mathrm{i}\widehat{g_{11}}(0)
            \end{pmatrix}
            +\mathcal{R}_g(\theta).
            \end{align}
    Hence we have 
            \[|\mathcal{R}_g|_{r_+}\leq 2 \tilde{\epsilon}e^{-(r-r_+)(2\pi N_2)^{\nu}}\leq 2\epsilon^{\frac{8}{9}} e^{-(r-r_+)(2\pi N_2)^{\nu}}.\]

    Now we define a $(2{\Z})^d$-periodic rotation 
        \[ 
            Z(\theta):=e^{\pi{\langle n_*,\theta\rangle}J}=M^{-1}
            \begin{pmatrix}
                e^{\frac{\mathrm{i}2\pi\langle n_*,\theta\rangle}{2}}&0\\0&e^{-\frac{\mathrm{i}2\pi\langle n_*,\theta\rangle}{2}}
            \end{pmatrix}M, 
        \]   
    where $ J $ is the symplectic matrix $ 
    \begin{pmatrix}
        0&1\\
        -1&0
    \end{pmatrix}
    $.    Obviously, $ Z\in G^{\nu}_{r}(2\mathbb{T}^d, \mathrm{SL}(2,\R)) $, and for any $ r''\in(0,r) $, $ |Z|_{r''}\leq 2e^{r''(\pi |n_*|)^{\nu}} $. Given any $ g\in G_{r}^{\nu\ (re)}(\eta) $, we have \[ Z(\theta+\alpha)^{-1}(\tilde{ A}e^g)Z(\theta)=M^{-1}\begin{pmatrix}
    e^{\mathrm{i}2\pi(\xi-\frac{\langle n_*,\alpha\rangle}{2})}&0\\0&e^{-\mathrm{i}2\pi(\xi-\frac{\langle n_*,\alpha\rangle}{2})}
    \end{pmatrix}Me^{Z^{-1}(\theta)gZ(\theta)}. \]
        By a direct calculation, we get 
        \[ Z(\theta)^{-1}g(\theta)Z(\theta)= M^{-1}\begin{pmatrix}
        \mathrm{i}\widehat{g_{11}}(0)&\widehat{g_{12}}(n_*)\\\overline{\widehat{g_{12}}(n_*)}&-\mathrm{i}\widehat{g_{11}}(0)
        \end{pmatrix}M+Z(\theta)^{-1}\mathcal{R}_g(\theta)Z(\theta):=\mathfrak{p}+\mathfrak{q}. \]
        Let $e^{\mathfrak{p}+\mathfrak{q}}=e^{\mathfrak{p}}e^{f_+}$, $A_+=e^{2\pi \left( \xi-\frac{\langle n_*,\theta\rangle}{2}\right)J}e^\mathfrak{p}:=e^{a_+}$, and $ B:=P\cdot e^Y\cdot Z $, the system $ (\alpha,Ae^f) $ is conjugate to $ (\alpha,A_+e^{f_+}) $ by $ B $.
             
    Now we get the corresponding estimates. By the decay property of Fourier coefficients of $ g(\theta)$, for any $ n\in\Z^d $, we have
            \begin{equation*}\label{fdecay}
            |\widehat{g_{11}}(n)|,|\widehat{g_{12}}(n)|\leq 2\tilde{\epsilon} e^{- r(2\pi|n|)^\nu}.
            \end{equation*} 
            Then by the BCH Formula (\ref{BakerF}), one has
            \begin{align*}
                \|a_+\|&\leq 2\left(\epsilon^{\sigma}+4\epsilon^{\frac{8}{9}}(1+e^{- r(2\pi|n_*|)^\nu})\right)\leq 4\epsilon^{\sigma},\\
                |f_+|_{r_+}&\leq 2\cdot 4e^{ 2r_+(\pi|n_*|)^\nu}\cdot 2\epsilon^{\frac{8}{9}} e^{-(r-r_+) (2\pi N_2)^\nu}\\
                &\quad\qquad\leq \epsilon^{\frac{3}{4}-\frac{2 r_+}{r-r_+}} e^{-(r-r_+) \epsilon^{-\frac{10\nu }{11}\frac{\sigma}{\tau}}}\ll\epsilon^{2},\\
                |B|_{r''}&\leq 32 \gamma^{-\frac{1}{2}}\|A\|^{\frac{1}{2}} |n_*|^{\frac{\tau}{2}}e^{r''(\pi |n_*|)^{\nu}},\quad\forall r''\in(0, r_+].
            \end{align*}
         It is easy to see that $ \deg B=n_* $, and (\ref{3.4}), thus we finish the proof of Proposition \ref{prop}.   
    \end{proof}
\subsection{KAM scheme}
Now we consider the initial cocycle $ (\alpha,A_0e^{f_0}) $. Denote its rotation number by $ \rho(\alpha,A_0e^{f_0}) $, Eliasson \cite{eliasson} proved that if $ \rho(\alpha,A_0e^{f_0}) $ is \textit{Diophantine} with respect to $ \alpha $, that is, there exist $ \kappa^{-1}>1 $, $ \tau>d $ such that $ \rho\in\mathrm{DC}_\alpha^d(\kappa,\tau) $, where
$$
\mathrm{DC}_\alpha^d(\kappa,\tau)=\{\phi\in\R^d: |2\phi-\langle m,\alpha\rangle|_{\T}\geq \frac{\kappa}{(1+|m|)^\tau} \}, 
$$ 
or \textit{rational} with respect to $ \alpha $ (i.e., $ 2\rho(\alpha,A_0e^{f_0})\equiv\langle k, \alpha\rangle\mod\Z$ for some $ k\in \Z^{d} \backslash \{0\}$), then the system $ (\alpha,A_0e^{f_0}) $ is reducible, thus is full measure reducible, for small analytic potentials. Here we will focus on a quantitative version of full measure reducibility. 
\begin{theorem}\label{thm4.1}
	Given $ r_0>0 $. Assume $ \alpha \in \mathrm{DC}_d(\gamma,\tau)$, $ A_0\in\mathrm{SL}(2,\R) $, $f_0\in  G^\nu_{r_0}(\mathbb{T}^d,\mathrm{sl(2,\R)})$, with $0<\nu<1$. Then for any $ r\in(0,r_0) $, there exists $\bar{c}=\bar{c}(\gamma,\tau,d,\nu)>0 $ and a numerical constant $ \bar{D} $, such that if 
    \begin{equation}\label{4.9}
        |f_0|_{r_0}=\epsilon_0<\epsilon_*:= \bar{c}\lVert A_0 \rVert^{-\bar{D}}(r_0-r)^{\bar{D}\nu\tau}, 
    \end{equation}
    the following holds:
    \begin{enumerate}[label=$\mathrm{(\roman*)}$\ \ ,align=left,widest=ii,labelsep=0pt,leftmargin=0.5em]
		\item The system $ (\alpha,A_0e^{f_0}) $ is strong almost reducible with the ``width'' $r$.
		\item If $ \rho(\alpha,A_0e^{f_0}) $ is rational w.r.t. $\alpha $, then there exist $ \widetilde{Z}\in G^\nu_{r}(2\mathbb{T}^d,\mathrm{SL}(2,\R)) $ and $ A\in\mathrm{SL}(2,\R) $ such that 
        \begin{equation} 
        \widetilde{Z}(\theta+\alpha)^{-1}A_0e^{f_0(\theta)}\widetilde{Z}(\theta)=A,
        \end{equation} 
        with estimates 
        $$
        |\widetilde{Z}|_{r''}\leq D_1|k|^{\tau}e^{2r''(\pi|k|)^{{\nu}}} , \forall r''\in (0,r],\ D_1=D_1(\gamma,\tau,\lVert A_0\rVert,r_0,d)>0.
        $$
		Moreover, if $ (\alpha,A_0e^{f_0}) $ is not uniformly hyperbolic,
		then $ A $ can be chosen as
        $$
        \begin{pmatrix}
            1 &\varphi \\
            0 & 1
            \end{pmatrix} \text{ with } |\varphi|\leq \epsilon_{0}^{\frac{3}{5}} e^{- r|2\pi k|^{{\nu}}}.
        $$

        \item If $ \rho(\alpha,A_0e^{f_0})\in \bigcup_{\gamma'>0}\mathrm{DC}_\alpha^d(\gamma',\tau) $, 
        then there exist $ B\in G^\nu_r(2\T^d,\mathrm{SL}(2,\R)) $, $ A\in\mathrm{SL}(2,\R) $ such that
        \begin{equation}\label{rotationreducible}
            B(\theta+\alpha)^{-1}A_0e^{f_0(\theta)}B(\theta)=A, \quad \rho(\alpha,A)\neq 0,
        \end{equation} 
        with estimates $ |B|_{r}\leq \Gamma_1(\gamma',\gamma,\tau,d,\nu,r,r_0,\lVert A_0\rVert) $, $ |\deg B|\leq \Gamma_2(\gamma',\gamma,\tau,d,\nu,r,r_0,\lVert A_0\rVert) $.
	\end{enumerate}
\end{theorem}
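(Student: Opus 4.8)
The plan is to iterate the inductive step, Proposition~\ref{prop}, along a KAM scheme with a controlled loss of width. Fix the widths $r_n:=r+(r_0-r)2^{-n}$, so $r_n\downarrow r$ and $r_n-r_{n+1}=(r_0-r)2^{-(n+1)}$; let $(\alpha,A_ne^{f_n})$ be the cocycle obtained after $n$ applications of Proposition~\ref{prop} to $(\alpha,A_0e^{f_0})$, set $\epsilon_n:=|f_n|_{r_n}$, and let $\mathcal B_n:=B_0\circ B_1\circ\cdots\circ B_{n-1}$ be the accumulated conjugacy. The core of the argument is to verify, by induction on $n$, that the smallness hypothesis $\epsilon_n\le c\|A_n\|^{-D}(r_n-r_{n+1})^{D\nu\tau}$ of Proposition~\ref{prop} holds at every step. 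This rests on two facts. First, in both the non-resonant and the resonant alternative Proposition~\ref{prop} returns $\epsilon_{n+1}\ll\epsilon_n^2$, so $\epsilon_n\le\epsilon_0^{2^n}$ decays super-exponentially and dominates the merely exponential decay of $(r_n-r_{n+1})^{D\nu\tau}\sim 2^{-nD\nu\tau}$ once $\epsilon_0$ is below a threshold. Second, $\|A_n\|\le M_0:=\max\{2\|A_0\|,4\}$ for all $n$, because a resonant step returns $A_{n+1}=e^{a_{n+1}}$ with $\|a_{n+1}\|\le 4\epsilon_n^{\sigma}$ (so $\|A_{n+1}\|<2$) while a non-resonant step multiplies the norm by at most $(1+4\epsilon_n)$ and $\prod_n(1+4\epsilon_n)<2$. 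Choosing $\bar c,\bar D$ so that $\epsilon_0<\epsilon_*=\bar c\|A_0\|^{-\bar D}(r_0-r)^{\bar D\nu\tau}$ forces the $n=0$ instance, and the induction then closes.

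Granting the iteration, part $\mathrm{(i)}$ is immediate: $\mathcal B_n\in G^\nu_{r_n}(2\mathbb T^d,\mathrm{SL}(2,\R))\subset G^\nu_r$ conjugates $(\alpha,A_0e^{f_0})$ to $(\alpha,A_ne^{f_n})$ with $|f_n|_r\le\epsilon_n\to 0$, and since $A_n$ ranges over the compact set $\{\|A\|\le M_0\}$, any subsequential limit $A_{n_j}\to A_\infty$ exhibits the constant cocycle $(\alpha,A_\infty)$ as a $G^\nu_r$-limit of conjugates of $(\alpha,A_0e^{f_0})$; this is strong almost reducibility with width $r$.

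For parts $\mathrm{(ii)}$ and $\mathrm{(iii)}$ I would show the scheme admits only finitely many resonant steps, by the quantitative form of Eliasson's resonance-cancellation. Let $D_n:=\deg\mathcal B_n=\sum_j n_*^{(j)}$. By (\ref{degree}), $\rho(\alpha,A_ne^{f_n})=\rho(\alpha,A_0e^{f_0})-\tfrac12\langle D_n,\alpha\rangle$, and by (\ref{lem3.2}) the eigenvalue phase $\xi_n$ of $A_n$ satisfies $|2\xi_n-2\rho(\alpha,A_ne^{f_n})|\lesssim\epsilon_n$; hence a resonance at step $n$, i.e. $|2\xi_n-\langle n_*,\alpha\rangle|_\T<\epsilon_n^{\sigma}$, forces $|2\rho(\alpha,A_0e^{f_0})-\langle D_n+n_*,\alpha\rangle|_\T\lesssim\epsilon_n^{\sigma}$. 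In the rational case $2\rho(\alpha,A_0e^{f_0})\equiv\langle k,\alpha\rangle$, so $|\langle k-D_n-n_*,\alpha\rangle|_\T\lesssim\epsilon_n^{\sigma}$, which by the Diophantine condition on $\alpha$ leaves only $D_n+n_*=k$ or $|k-D_n-n_*|\gtrsim\epsilon_n^{-\sigma/\tau}$; the second alternative, combined with $|n_*|\le N_n\lesssim|\ln\epsilon_n|^{1/\nu}$ and with the fact (from the Claim in the proof of Proposition~\ref{prop}) that consecutive resonant vectors grow at least like $\epsilon^{-\sigma/\tau}$, forces the resonant-step count to satisfy a super-exponential-versus-polynomial inequality, so it is impossible beyond finitely many steps; when the first alternative occurs, $D_{n+1}=k$ and every subsequent step is non-resonant (a further resonance would need $|\langle m,\alpha\rangle|_\T\lesssim\epsilon^{\sigma}$ with $0<|m|\le N$, impossible by the Diophantine condition). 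The Diophantine case is identical with $\gamma'/(1+|D_n+n_*|)^{\tau}\lesssim\epsilon_n^{\sigma}$ replacing the rational dichotomy, again forcing $|D_n|$ to be astronomically large after finitely many resonances. In both cases, beyond the last resonant step the iteration is purely non-resonant, $\sum_m|B_m-\mathrm{Id}|_{r_{m+1}}\le\sum_m 2\epsilon_m^{1/2}<\infty$, so $\mathcal B_n\to\widetilde Z\in G^\nu_r(2\mathbb T^d,\mathrm{SL}(2,\R))$ and $A_n\to A$, giving genuine reducibility $\widetilde Z(\theta+\alpha)^{-1}A_0e^{f_0}\widetilde Z(\theta)=A$. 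In the rational case $\rho(\alpha,A)=\rho(\alpha,A_0e^{f_0})-\tfrac12\langle k,\alpha\rangle\equiv 0$; if $(\alpha,A_0e^{f_0})$ is not uniformly hyperbolic then, uniform hyperbolicity being a conjugacy invariant, neither is $A$, which together with $\rho(\alpha,A)=0$ forces $A$ parabolic with $\mathrm{tr}\,A=+2$, i.e. $A=\mathrm{Id}+N$ with $N$ a (possibly zero) rank-one nilpotent; conjugating by a bounded constant matrix puts $A$ in the form $\begin{pmatrix}1&\varphi\\0&1\end{pmatrix}$, where $|\varphi|$ is controlled by the bound $|\mu_+|\le\epsilon^{\frac34}e^{-r|2\pi n_*|^{\nu}}$ of (\ref{3.4}) at the resonant step extracting $n_*=k$, plus the negligible $\sum_m\epsilon_m$ from the later non-resonant steps, yielding $|\varphi|\le\epsilon_0^{\frac35}e^{-r|2\pi k|^{\nu}}$; tracking the degree-$k$ rotation $Z$ and the conjugating matrix $P$ at that step gives $|\widetilde Z|_{r''}\le D_1|k|^{\tau}e^{2r''(\pi|k|)^{\nu}}$. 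In the Diophantine case $\rho(\alpha,A)=\rho(\alpha,A_0e^{f_0})-\tfrac12\langle D_\infty,\alpha\rangle\neq 0$, since the Diophantine condition precludes $2\rho(\alpha,A)\equiv 0$, and $|B|_r$, $|\deg B|$ are bounded by the finitely many, size-controlled resonant contributions, giving $\Gamma_1,\Gamma_2$.

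The main obstacle is propagating the smallness condition (\ref{epsilon}) through a resonant step, where the conjugacy can be as large as $\sim|n_*|^{\tau/2}e^{r''(\pi|n_*|)^{\nu}}$: this is precisely why Proposition~\ref{prop}(II) is built to return $|f_+|_{r_+}\le\epsilon^{\frac34-\frac{2r_+}{r-r_+}}e^{-(r-r_+)\epsilon^{-\frac{10\nu}{11}\frac{\sigma}{\tau}}}\ll\epsilon^2$, the super-exponential factor $e^{-(r-r_+)\epsilon^{-\frac{10\nu}{11}\frac{\sigma}{\tau}}}$ absorbing both the negative-power prefactor $\epsilon^{-\frac{2r_+}{r-r_+}}$ and the blow-up of $B$. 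One must check this gain survives the geometric choice $r_n-r_{n+1}\sim(r_0-r)2^{-n}$, i.e. that $(r_n-r_{n+1})\epsilon_n^{-\frac{10\nu}{11}\frac{\sigma}{\tau}}\to\infty$ quickly enough, which holds because $\epsilon_n^{-1}$ is super-exponentially large in $n$ while $r_n-r_{n+1}$ and $|n_*|^{\nu}$ grow only (at most) geometrically. The secondary delicate point is the termination bookkeeping in $\mathrm{(ii)}$ and $\mathrm{(iii)}$: confirming that the resonant vectors forced by the Diophantine/rational structure of $\rho$ cannot be sustained indefinitely, and that in the rational case the accumulated degree $D_n$ eventually equals $k$.
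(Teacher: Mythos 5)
Your proposal is correct and follows essentially the same route as the paper: the same iteration of Proposition~\ref{prop} with geometrically shrinking width losses and super-exponentially decaying $\epsilon_j$ against an only exponentially shrinking smallness threshold, the same degree/rotation-number bookkeeping via (\ref{degree}) and the Diophantine condition to show there are finitely many resonant steps, and the same extraction of the parabolic normal form from the last resonant step (the paper packages your ``no further resonances once $\epsilon_j$ beats the degree-shifted Diophantine constant'' argument as the Dinaburg--Sinai Lemma~\ref{DinaburgSinai}). The only glossed bookkeeping—e.g.\ that the last resonant vector $n_{j_s}$ agrees with $k$ only up to the much smaller earlier resonant vectors, and that $|\varphi|=2|t|$ with $|t|=|\mu|$ via $\det a=0$—is exactly what the paper spells out, so no genuine gap.
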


\begin{proof}
	We prove (i) by induction.
    
    1). Take $r_0,r,\epsilon_*$ such that the first step holds.
    
    2). Assume that we are at the $ (j+1)^{\mathrm{th}} $ KAM step, where we have $ A_j\in \mathrm{SL}(2,\R) $ with eigen-values $ e^{\pm\mathrm{i}2\pi \xi_j} $ and $ f_j\in G^\nu_{r_j} $ is satisfied $ |f_j|_{r_j}\leq \epsilon_j $ for some $ \epsilon_j\leq\epsilon_0 $. Let $ \tilde{r} =\frac{r_0+r}{2}$. Then we define
	\begin{alignat}{2}\label{24}
	r_j-r_{j+1}:=\frac{r_0-\tilde{r}}{4^{j+1}}, &\qquad&N_j:=\frac{1}{2\pi}\left(\frac{|\ln \epsilon_j|}{r_j-r_{j+1}}\right)^{\frac{1}{\nu}}=\frac{1}{2\pi}\left(\frac{4^{j+1}|\ln \epsilon_j|}{r_0-\tilde{r}}\right)^{\frac{1}{\nu}}
	.
	\end{alignat}
	If $ \epsilon_j $ is sufficiently small such that $\epsilon=\epsilon_j$, $r=r_j$, $r_+=r_{j+1}$, $A=A_j$ satisfying (\ref{epsilon}), then by the Proposition \ref{prop}, there exist
	\begin{alignat*}{3}
	Z_j\in G^\nu_{r_{j+1}}(2\mathbb{T}^d,\mathrm{SL}(2,\R)), &\quad&A_{j+1}\in \mathrm{sl}(2,\R), &\quad&f_{j+1} \in G^\nu_{r_{j+1}}(\mathbb{T}^d,\mathrm{sl}(2,\R)),
	\end{alignat*}
	such that 
    \begin{equation} \label{almostreduciblej}
        Z_j(\theta+\alpha)^{-1}A_je^{f_j(\theta)}Z_j(\theta)=A_{j+1}e^{f_{j+1}(\theta)}.
    \end{equation} 
    Moreover,
	\begin{enumerate}[label=$\arabic*^\circ$,align=left,widest=ii,labelsep=10pt,leftmargin=1em]
		\item  If for any $ n\in\Z^d , 0<|n|\leq N_j$, we have $ |2\xi_j-\langle n,\alpha\rangle|_{\T}\geq \epsilon^{\frac{1}{20}} $, then
		\begin{alignat}{3}\label{25}
		\|A_{j+1}-A_j\|\leq 4\|A_j\|\epsilon_j, &\quad&|f_{j+1}|_{r_{j+1}}\leq \epsilon_{j+1}:=\epsilon_j^2,&\quad& |Z_j-\mathrm{Id}|_{r_{j+1}}\leq 2\epsilon_j^{\frac{1}{2}}. 
		\end{alignat}
		\item If there is $ n_j\in \Z^d $, $ 0<|n_j|\leq N_j $ such that $ |2\xi_j-\langle n_j,\alpha\rangle|_{\T}\leq \epsilon^{\frac{1}{20}}, $ then 
        \begin{equation} \label{structure}
            A_{j+1}=e^{a_{j+1}}=M^{-1}\exp \begin{pmatrix}
                \mathrm{i}t_j & \mu_j\\
                \overline{\mu_j} & -\mathrm{i}t_j    
            \end{pmatrix}M 
        \end{equation}  
        with estimates:
		\begin{equation}\label{26}
		\begin{aligned}
            \|a_{j+1}\|&\leq  4\epsilon_j^{\frac{1}{20}},\quad|t_j|\leq \epsilon_j^{\frac{1}{20}},\quad|\mu_j|\leq \epsilon_j^{\frac{3}{4}}e^{-r|2\pi n_*|^{\nu}}, \\ 
         |f_{j+1}|_{r_{j+1}}&\leq\epsilon_{j+1}:=\epsilon_{j}^{\frac{3}{4}-\frac{2 r_{j+1}}{r_j-r_{j+1}}} e^{-(r_j-r_{j+1}) \epsilon_j^{-\frac{1}{22}\frac{\nu}{\tau}}}. 
        \end{aligned}
		\end{equation}
		and
		\begin{equation}\label{27}
		|Z_j|_{r''} \leq 32\gamma^{-\frac{1}{2}}\|A_j\|^{\frac{1}{2}}|n_j|^{\frac{\tau}{2}} e^{r''(\pi|n_j|)^\nu}
		, \quad\forall r''\in (0,r_{j+1}].
		\end{equation} 
	\end{enumerate}

	3). In view of (\ref{25}) and (\ref{26}) one sees that for any $ j\geq0 $, $ \epsilon_j\leq\epsilon_0^{2^j} $, $ \|A_j\|\leq2\|A_0\| $. So if $ \epsilon_0<\epsilon_* $, then Proposition \ref{prop} can be applied iteratively. Indeed, it suffices to consider (\ref{epsilon}) in each KAM step. $\epsilon_j$ decays super-exponentially with $j$, while the right hand side of (\ref{epsilon}) decays exponentially with $j$.
	Hence $ (\alpha, A_0e^{f_0}) $ is almost reducible. 
	
	For (ii). Assume that there are at least two resonant steps in the above procedure. We consider two consecutive resonant steps, assume they are the $ p^{\mathrm{th}} $ and the $ q^{\mathrm{th}} $. If at the $ q^{\mathrm{th}} $, by the resonant condition, $ \left\lvert 2\xi_{ q} -\langle n_{q},\alpha\rangle\right\rvert_{\T}\leq \epsilon_{q}^{\frac{1}{20}} $, hence $ \lvert\xi_{ q}\rvert>\frac{\gamma}{4|n_{q}|^\tau} $. By the Proposition \ref{prop}, after the $ p^{\mathrm{th}} $, we have $ \lvert\xi_{ p+1}\rvert \leq 4\epsilon_{p}^{\frac{1}{20}}$. By (\ref{25}) and (\ref{4.9}), we have $ |\xi_{q} |\leq 8\epsilon_{p}^{\frac{1}{20}}\leq \frac{\epsilon_{p}^{\frac{1}{22}}\gamma}{4|n_{p}|^\tau}$. Thus, 
    \begin{equation}\label{28}
	|n_{q}|\geq \epsilon_{p}^{-\frac{1}{22\tau}}|n_{p}|.
	\end{equation}
	If the $ (j_m)^{\mathrm{th}} $ is the resonant step, then $ \deg Z_{j_m}=n_{j_m} $. By (\ref{degree}) and (\ref{28}) and $ 2\rho(\alpha,A_0e^{f_0})\equiv\langle k, \alpha\rangle \mod\Z$, we have $ \frac{\langle k,\alpha\rangle}{2}=\rho(\alpha,A_{j_m+1}e^{f_{{j_m+1}}}) +\sum_{l=1}^m \frac{\langle n_{j_l},\alpha\rangle}{2},  $ then 
    \begin{align*}
	8\pi\epsilon_{j_m}^{\frac{1}{20}}\geq|2\pi\xi_{j_m+1}|&=|\rho(\alpha, A_{j_m+1}e^{f_{{j_m+1}}})|=\frac{1}{2}|\langle k-\sum_{l=1}^{m} n_{j_l},\alpha\rangle|\gtrsim \frac{1}{N_{j_m}^\tau}, 
	\end{align*}
	thus there are at most finitely many resonant steps.
	
	Then by the estimates of $ Z_{j} $ in (\ref{25}) and the sequence $ (r_j)_{j\in\mathbb{N}} $ given in (\ref{24}), we can find that $ \prod_{j=0}^{\infty} Z_{j} = \widetilde{Z}\in G^\nu_{\tilde{r}}(2\mathbb{T}^d, \mathrm{SL}(2,\R)) $ is well-defined, thus we have $ \widetilde{Z}(\theta+\alpha)^{-1}(A_0e^{f_0})\widetilde{Z}=e^a $ for some $ a\in \mathrm{sl}(2,\R) $ with $ \rho(\alpha,e^a)=0 $. 
	
	Assume that there are $ s+1 $ resonant steps totally, and the resonant points are\[ n_{j_0},\dots,n_{j_s} \in\Z^d,\quad 0<|n_{j_i}|\leq N_{j_i}, \, i=0,1,\dots,s,\] then $ k= n_{j_0}+\cdots+n_{j_s}  $. By (\ref{28}), we have 
    $$ 
    \begin{aligned}
    |n_{j_s}|-\sum_{i=0}^{s-1}|n_{j_i}|&\leq |k|\leq |n_{j_s}|+\sum_{i=0}^{s-1}|n_{j_i}|,\\
    (1-2\epsilon_{j_{s-1}}^{\frac{1}{22\tau}})^\nu |n_{j_s}|^{\nu}&\leq|k|^{\nu}\leq(1+2\epsilon_{j_{s-1}}^{\frac{1}{22\tau}})^{\nu} |n_{j_s}|^{\nu},\\
    \prod_{i=0}^s \text{const.}|n_{j_i}|&\leq (\text{const.}|n_{j_s}|)^2.
    \end{aligned}
    $$ 
    Then by (\ref{27}), for any $r''\in(0,r],$
\begin{align*}
|\widetilde{Z}|_{r''}&\leq 4|Z_{j_0}|_{r''}\cdots|Z_{j_s}|_{r''}\\
&\leq 4\prod_{i=0}^s 32\gamma^{-\frac{1}{2}}\|A_{j_i}\|^{\frac{1}{2}}|n_{j_i}|^{\frac{\tau}{2}} e^{r''(\pi|n_{j_i}|)^{{\nu}}}\leq  D_1 |k|^{\tau}e^{2r''(\pi|k|)^{{\nu}}},
\end{align*}
where $D_1=D_1(\gamma,\tau,\|A_0\|,r_0,d)>0$. 

If $ (\alpha,A_0e^{f_0}) $ is not uniformly hyperbolic, which means $ a $ can not be a hyperbolic matrix, thus we have $ \det a=0. $ Assume $ a=\begin{pmatrix}
	a_{11}&a_{12}\\a_{21}&-a_{11}
	\end{pmatrix} $. Then exists $ \vartheta \in\mathbb{T} $ such that $ R_{-\vartheta}aR_\vartheta=\widetilde{a}:=\begin{pmatrix}
	0&a_{12}-a_{21}\\0&0
	\end{pmatrix} $, replace $ \widetilde{Z} $ by $ \widetilde{Z}R_\vartheta $, and let $ \varphi=a_{12}-a_{21} $, we get $ (\alpha,A_0e^{f_0}) $ is conjugated to $ (\alpha,e^{\widetilde{a}}) $. 
	
	Noting that if we rewrite $a$ as $a=M^{-1}\begin{pmatrix}
	\mathrm{i}t&\mu\\\overline{\mu}&-\mathrm{i}t
	\end{pmatrix}M$ with $t\in\R$, $\mu\in\mathbb{C}$, by (\ref{isom}), we have $|\varphi|=2|t|$. Now we consider after the last resonant step $ (\alpha,A_{j_s+1}e^{f_{j_s+1}}) $: 
    
    Clearly, we have 
$$e^a=e^{a_{j_s+1}}\prod_{l=1}^{\infty}e^{\widehat{g_l}(0)},$$
then apply the BCH Formula (\ref{BakerF}) again, we have 
	\begin{equation*}
	\|a_{j_s+1}-a \|\leq 2\|\widehat{g_1}(0)\|\leq 4\epsilon_{j_s+1}.
	\end{equation*}
	
	By the structure of $a_{j_s+1}$ (c.f. (\ref{structure})), and $\det a=0$, we have
	\[|t|=|\mu|\leq\epsilon_{j_s}^{\frac{3}{4}} e^{- r_{j_s}(2\pi |n_{j_s}|)^\nu}+4\epsilon_{j_s+1}.\]
Since $ \epsilon_{j_s+1}:=\epsilon_{j_s}^{\frac{3}{4}-\frac{2r_{j_s+1}}{r_{j_s}-r_{j_s+1}}} e^{-(r_{j_s}-r_{j_s+1}) \epsilon_{j_s}^{-\frac{10\nu}{11}\frac{\sigma}{\tau}}}$, we get	
\begin{equation} \label{3.15}
    |\varphi|\leq 5\epsilon_{j_s}^{\frac{3}{4}} e^{- r_{j_s}(2\pi|n_{j_s}|)^{{\nu}}}\leq\epsilon_{0}^{\frac{3}{5}}e^{-r|2\pi k|^{{\nu}}}.
\end{equation} 
For (iii). Assume that we are at the $ (j+1)^{\mathrm{th}} $ KAM step, that is, we have (\ref{almostreduciblej}). Then set $ B_0=\mathrm{Id} $, and let $ B_{j+1}=B_jZ_j(\theta) $, we have 
$$
    B_{j+1}(\theta+\alpha)^{-1}A_0e^{f_0(\theta)}B_{j+1}(\theta)=A_{j+1}e^{f_{j+1}(\theta)},
$$ 
we distinguish two cases:\\ 
\textrm{Non-resonant case:} By $ 1^\circ $, , we have 
$$
    |B_{j+1}|_{r_{j+1}}\leq (1+2\epsilon_j^{\frac{1}{2}})|B_{j}|_{r_j}, \text{ and } \deg B_{j+1}=\deg B_j.
$$ 
\textrm{Resonant case:} By $ 2^\circ $, review Proposition \ref{prop}, we have 
$$
\begin{aligned}
|B_{j+1}|_{r_{j+1}}&\leq 32\gamma^{-\frac{1}{2}}\|A_j\|^{\frac{1}{2}}|n_j|^{\frac{\tau}{2}}e^{r_{j+1}(\pi|n_{j_i}|)^{{\nu}}}|B_{j}|_{r_j}, \text{ and } \deg B_{j+1}=\deg B_j+n_j.
\end{aligned}
$$ 
Thus we have
$$
    \begin{aligned}
        |\deg B_j|&\leq \sum_{i=0}^{j-1} N_i\leq \sum_{i=0}^{j-1}\frac{1}{2\pi}\left(\frac{4^{j+1}|\ln \epsilon_j|}{r_0-\tilde{r}}\right)^{\frac{1}{\nu}}\leq \frac{|\ln\epsilon_j|^3}{r_0-\tilde{r}}.
    \end{aligned}
$$ 
Since $ \rho(\alpha,A_0e^{f_0(\theta)})\in \mathrm{DC}_{\alpha}^d(\gamma',\tau) $ with some $ \gamma'>0 $, for any $ m\in\Z^d $, we have
$$
    \begin{aligned}
        |2\rho(\alpha,A_je^{f_j(\theta)})-\langle m,\alpha\rangle|_{\T}&=|2\rho(\alpha,A_0e^{f_0(\theta)})+\langle \deg B_j,\alpha\rangle-\langle m,\alpha\rangle|_{\T}\\
        &\geq \frac{\gamma'}{(|m-\deg B_j|+1)^\tau}\geq \frac{\gamma'(1+|\deg B_j|)^{-\tau}}{(|m|+1)^\tau}
    \end{aligned}
$$ 
which means $ \rho(\alpha,A_je^{f_j(\theta)})\in\mathrm{DC}_\alpha^d(\gamma'(1+|\deg B_j|)^{-\tau},\tau) $.

Then we will need the following lemma which is essentially proved by Dinaburg-Sinai \cite{dinaburg1975one}, see also \cite{ge2019exponential}.
\begin{lemma}\label{DinaburgSinai}
    Let $ \rho(\alpha,A_je^{f_j(\theta)})\in\mathrm{DC}_\alpha^d(\kappa,\tau) $, then if 
    \begin{equation} \label{lem4.3}
        |f_j|_{r_j}\leq \epsilon_j< c\kappa^{40\nu}\lVert A \rVert^{-D}(r_j-r_{j+1})^{D\nu\tau}, 
    \end{equation} 
    there exist $ \bar{Z}\in G^\nu_{r}(\T^d,\mathrm{SL}(2,\R)) $, $ A\in\mathrm{SL}(2,\R) $ such that
    $$
    \bar{Z}(\theta+\alpha)^{-1}A_je^{f_j(\theta)}\bar{Z}(\theta)=A,
    $$ 
    with estimates $ |\bar{Z}-\mathrm{Id}|_{r}\leq 4\epsilon_j^{\frac{1}{2}} $, $ \lVert A-A_j\rVert\leq 8\lVert A_0\rVert\epsilon_j $.
\end{lemma}
\begin{proof}
    By (\ref{lem4.3}), (\ref{degree}) and $ \rho(\alpha,A_je^{f_j(\theta)})\in \mathrm{DC}_\alpha^d(\kappa,\tau) $, for any $ 0<|n|\leq N_j $, we have 
$$
    \begin{aligned}
        |2\pi\xi_j-\langle n,\alpha\rangle|_\T&\geq |\rho(\alpha,A_je^{f_j(\theta)})-\langle n,\alpha\rangle|_{\T}-|2\pi\xi_j-\rho(\alpha,A_je^{f_j(\theta)})|\\
        &\geq \frac{\kappa}{(1+|n|)^{\tau}}-\epsilon_j\geq \frac{\kappa}{(1+N_j)^{\tau}}-\epsilon_j \geq \epsilon_j^{\frac{1}{20}}.
    \end{aligned}
$$ 
Thus there are no resonant steps, then by (\ref{25}), there exists $ \bar{Z}(\theta):=\prod_{j=0}^{\infty}Z_j(\theta)\in G_r^\nu(\T^d,\mathrm{SL}(2,\R)) $ such that 
$$
\bar{Z}(\theta+\alpha)^{-1}A_je^{f_j(\theta)}\bar{Z}(\theta)=A,
$$ 
with $ | \bar{Z}-\mathrm{Id}|_{r}\leq 4\epsilon_j^{\frac{1}{2}} $, $ \lVert A-A_j\rVert\leq 8\lVert A_0\rVert\epsilon_j $.
\end{proof}
Notice that $ \epsilon_j\leq e_0^{2^j} $, one can select $ j_0\in\Z $ to be the smallest integer such that
$$
    \epsilon_j\leq c(\frac{\gamma'}{(1+|\deg B_j|)^{\tau}})^{40\nu}\lVert A \rVert^{-D}(r_j-r_{j+1})^{D\nu\tau},
$$ 
then by Lemma \ref{DinaburgSinai}, there exists $ \bar{Z}\in G_r^\nu(\T^d,\mathrm{SL}(2,\R)) $ such that
$$
    \bar{Z}(\theta+\alpha)^{-1}B_{j_0}(\theta+\alpha)^{-1}A_0e^{f_0(\theta)}B_{j_0}(\theta)\bar{Z}(\theta)=A.
$$ 
Let $ B=B_{j_0}\bar{Z}(\theta) $, by (\ref{28}), any two consecutive resonant steps separate exponentially, we have
$$
|B|_r\leq 2|B_{j_0}|_{r_{j_0}}\leq D_1 |n_{j_0}|^\tau e^{2r_0|\pi n_{j_0}|^\nu}\leq \Gamma_1(\gamma',\gamma,\tau,d,\nu,r,r_0,\lVert A_0\rVert),
$$
$$ 
|\deg B|=|\deg B_{j_0}|\leq \frac{|\ln\epsilon_{j_0}|^3}{r_0-\tilde{r}}\leq\Gamma_2(\gamma',\gamma,\tau,d,\nu,r,r_0,\lVert A_0\rVert).
$$
Notice that $ \rho(\alpha,A)\neq 0 $, otherwise it will contradict to $ \rho(\alpha,A_0e^{f_0(\theta)})\in\mathrm{DC}_\alpha^d(\gamma',\tau) $. 
\end{proof}
\section{Proof of Theorem \ref{thmlong}}
A solution $ u_E^\phi(n) $ of $ L_{\lambda^{-1}v,\alpha,\phi}u_E^\phi(n)=E u_E^\phi(n) $ is called a \textit{normalized eigenfunction} of $ L_{\lambda^{-1}v,\alpha,\phi} $ if $ \sum_n |u_E^\phi(n)|^2=1 $. 
\begin{Definition}
    For any fixed $ N\in\N $, $ C>0 $, $ \varepsilon>0 $, a normalized eigenfunction $ u_E^{\phi}(n) $ of $ L_{\lambda^{-1}v,\alpha,\phi} $ is said to be $ (\nu,N,C,\varepsilon)$-good if $ |u_E^{\phi}(n)|\leq e^{-C\varepsilon |n|^{\nu}} $ for $ |n|\geq (1-\varepsilon)N $.
\end{Definition}
We denote by $ u_j^\phi(n) $ the $ (\nu,N,C,\varepsilon)$-good eigenfunctions of $ L_{\lambda^{-1}v,\alpha,\phi} $ and by $ E_j^\phi $ the corresponding eigenvalues. Let 
$$
    \mathcal{E}_{\nu,N,C,\varepsilon}^\phi=\{E_j^\phi:u_j^\phi(n) \text{ is a } (\nu,N,C,\varepsilon)\text{-good eigenfunction of } L_{\lambda^{-1}v,\alpha,\phi} \}.
$$  
Then we have the following criterion.
\begin{proposition}
    For a.e. $ \phi\in\T $, suppose that there exists $ C>0 $, and for any $ \delta>0 $, there exist $ N_0(\delta)>0 $ and $ 0<\varepsilon(\delta,N_0,C,\nu)<1 $, such that
    \begin{equation}\label{sharp1}
        \sharp\mathcal{E}_{\nu,N,C,\varepsilon}^\phi\geq (1-\delta)(2N)^d
    \end{equation} 
    holds for any $ N\geq N_0(\delta) $ and $ \varepsilon=\varepsilon(\delta,N_0,C,\nu) $. Then $ L_{\lambda^{-1}v,\alpha,\phi} $ has pure point spectrum with sub-exponentially for a.e. $ \phi\in\T $.
\end{proposition}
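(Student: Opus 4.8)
The plan is to deduce the result from a completeness statement: for a.e.\ $\phi\in\T$, the closed linear span $\mathcal{H}_{\mathrm{sub}}(\phi)\subset\ell^2(\Z^d)$ of all sub-exponentially decaying eigenfunctions of $L_{\lambda^{-1}v,\alpha,\phi}$ (those $u$ with $|u(n)|\le C_u e^{-c_u|n|^\nu}$ for some $C_u,c_u>0$) equals the whole space. This suffices: since $\mathcal{H}_{\mathrm{sub}}(\phi)$ is contained in the point-spectral subspace $\mathcal{H}_{\mathrm{pp}}(\phi)$, one gets $\mathcal{H}_{\mathrm{pp}}(\phi)=\ell^2(\Z^d)$, i.e.\ pure point spectrum, and, organizing the spanning eigenfunctions by eigenvalue and running Gram--Schmidt inside each eigenspace (finite combinations of sub-exponentially decaying vectors with a common eigenvalue are again such eigenfunctions), one extracts a complete orthonormal system of sub-exponentially decaying eigenfunctions. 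Write $Q(\phi)$ for the orthogonal projection onto $\mathcal{H}_{\mathrm{sub}}(\phi)$, $\chi_{\Lambda_N}$ for the coordinate projection onto $\Lambda_N:=\{n\in\Z^d:|n|\le N\}$, $U_m$ for the translation $(U_mu)_n=u_{n-m}$, and $\{\delta_m\}_{m\in\Z^d}$ for the canonical basis of $\ell^2(\Z^d)$. Two facts drive the argument: the covariance $U_m^{-1}L_{\lambda^{-1}v,\alpha,\phi}U_m=L_{\lambda^{-1}v,\alpha,\phi+\langle m,\alpha\rangle}$ — under which a translation carries a sub-exponentially decaying eigenfunction of $L_{\cdot,\phi}$ to one of $L_{\cdot,\phi+\langle m,\alpha\rangle}$, so that $U_m^{-1}Q(\phi)U_m=Q(\phi+\langle m,\alpha\rangle)$ — and the hypothesis, which at scale $N$ furnishes an orthonormal family $\{u_j^\phi\}_{j\in\mathcal{J}}$ (one normalized good eigenfunction per eigenvalue in $\mathcal{E}^\phi_{\nu,N,C,\varepsilon}$, pairwise orthogonal since $L_{\lambda^{-1}v,\alpha,\phi}$ is self-adjoint) with $|\mathcal{J}|\ge(1-\delta)(2N)^d$, each lying in $\mathcal{H}_{\mathrm{sub}}(\phi)$ and obeying $\|\chi_{\Lambda_N}u_j^\phi\|^2\ge 1-\sum_{|n|>N}e^{-2C\varepsilon|n|^\nu}=:1-\mu_N$ with $\mu_N\to0$ as $N\to\infty$.

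Next I would combine these into a trace estimate. Covariance together with translation invariance of Lebesgue measure on $\T$ gives the exact identity
\[
    \int_\T\operatorname{Tr}\bigl(\chi_{\Lambda_N}Q(\phi)\chi_{\Lambda_N}\bigr)\,d\phi=(2N+1)^d\,\beta,\qquad \beta:=\int_\T\langle\delta_0,Q(\phi)\delta_0\rangle\,d\phi ,
\]
valid for every $N$, because $\operatorname{Tr}(\chi_{\Lambda_N}Q(\phi)\chi_{\Lambda_N})=\sum_{|m|\le N}\langle\delta_m,Q(\phi)\delta_m\rangle=\sum_{|m|\le N}\langle\delta_0,Q(\phi+\langle m,\alpha\rangle)\delta_0\rangle$. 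On the other hand, since $Q(\phi)\succeq\sum_{j\in\mathcal{J}}|u_j^\phi\rangle\langle u_j^\phi|$, the hypothesis yields, for a.e.\ $\phi$ and every $\delta>0$, the pointwise bound $\operatorname{Tr}(\chi_{\Lambda_N}Q(\phi)\chi_{\Lambda_N})\ge(1-\delta)(2N)^d(1-\mu_N)$ for all $N\ge N_0(\phi,\delta)$. Dividing by $(2N+1)^d$ and letting $N\to\infty$ shows $\liminf_{N\to\infty}(2N+1)^{-d}\operatorname{Tr}(\chi_{\Lambda_N}Q(\phi)\chi_{\Lambda_N})\ge1-\delta$ for all $\delta>0$, hence this limit equals $1$ for a.e.\ $\phi$; since the integrand is bounded above by $1$, dominated convergence (or Fatou's lemma) gives $\beta=1$.

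It remains to turn $\beta=1$ into the completeness of $\mathcal{H}_{\mathrm{sub}}$. As $\langle\delta_0,Q(\phi)\delta_0\rangle\le\|\delta_0\|^2=1$ and its average over $\phi$ is $1$, we must have $\langle\delta_0,Q(\phi)\delta_0\rangle=1$, i.e.\ $\delta_0\in\mathcal{H}_{\mathrm{sub}}(\phi)$, for a.e.\ $\phi$. Applying the covariance $U_m^{-1}Q(\phi)U_m=Q(\phi+\langle m,\alpha\rangle)$ once more, $\delta_m\in\mathcal{H}_{\mathrm{sub}}(\phi)$ for a.e.\ $\phi$ for each fixed $m$, and intersecting these countably many full-measure sets we conclude that for a.e.\ $\phi$ every $\delta_m$ lies in $\mathcal{H}_{\mathrm{sub}}(\phi)$, so $\mathcal{H}_{\mathrm{sub}}(\phi)=\overline{\mathrm{span}}\{\delta_m:m\in\Z^d\}=\ell^2(\Z^d)$. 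By the reduction in the first paragraph this proves the proposition.

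I expect the only genuinely delicate points to be measure-theoretic rather than analytic: one should check that $\phi\mapsto Q(\phi)$ (equivalently, $\phi\mapsto\mathcal{H}_{\mathrm{sub}}(\phi)$) is measurable — this is standard for covariant families of spectral-type projections — keep in mind that $N_0$ and $\varepsilon$ in the hypothesis may depend on $\phi$, and verify that selecting one eigenfunction per point of $\mathcal{E}^\phi_{\nu,N,C,\varepsilon}$ really produces an orthonormal set of cardinality $\sharp\mathcal{E}^\phi_{\nu,N,C,\varepsilon}$. I emphasize that no ergodicity is needed here: the hypothesis is already assumed for a.e.\ $\phi$, and it is purely the covariance of the operator family together with the invariance of Lebesgue measure on $\T$ that propagates the scale-$N$ counting information into full-measure completeness.
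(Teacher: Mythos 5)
Your argument is correct (at the same level of rigor as the paper), but it executes the key limiting step differently. The paper follows the Avila--You--Zhou-type criterion: it works with the spectral projections $P^\phi(\triangle)$ and the spectral measures $\mu_{\delta_{n_0},\phi}$, bounds the box-averaged mass $\frac{1}{(2N)^d}\sum_{|n|\le N}\mu_{\delta_{n_0},\phi+\langle n,\alpha\rangle}\bigl(\mathcal{E}_C(\phi)\bigr)$ from below by $(1-\delta)(1-o(1))$ using the good eigenfunctions, and then invokes the Birkhoff ergodic theorem for the $\Z^d$ rotation to conclude $\int_\T\mu_{\delta_{n_0},\phi}(\mathcal{E}_C(\phi))\,\dif\phi=1$, hence $\mu_{\delta_{n_0},\phi}(\mathcal{E}_C(\phi))=1$ a.e.\ for every fixed $n_0$. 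You instead package the good eigenfunctions into the covariant projection $Q(\phi)$ onto the span of all sub-exponentially decaying eigenfunctions, use the exact integrated trace identity coming from covariance plus translation invariance of Lebesgue measure, and pass to the limit by Fatou/dominated convergence; completeness of $\{\delta_m\}$ in $\mathcal{H}_{\mathrm{sub}}(\phi)$ is then propagated from $\delta_0$ by covariance rather than by repeating the argument for each $n_0$. What your route buys is that the pointwise ergodic theorem is not needed at all inside the proposition (only invariance of the measure), which is slightly more elementary; what the paper's route buys is that it stays entirely with spectral measures of the fixed operator family, which is the form in which the criterion is usually quoted and applied. Two shared caveats, neither of which I count against you: both proofs implicitly need measurability in $\phi$ of the relevant covariant object ($\phi\mapsto\langle\delta_0,Q(\phi)\delta_0\rangle$ for you, $\phi\mapsto\mu_{\delta_{n_0},\phi}(\mathcal{E}_C(\phi))$ for the paper), a point you flag explicitly while the paper leaves it to the cited reference; and both use the same loose normalization identifying the cardinality of the box $\{|n|\le N\}$ with $(2N)^d$, so your $(2N+1)^d$ is consistent with the paper's conventions.
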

\begin{proof}
    The proof are essentially contained in \cite{avila2017sharp}, we give the proof here for completeness.

    Let $ P^\phi(\bigtriangleup) $ be the spectral projection of $ L_{\lambda^{-1}v,\alpha,\phi} $ on to the set $ \bigtriangleup $, then for any $ \delta_n\in\ell^2(\Z) $, it induces the spectral measure:
    $$
        \langle P_{\bigtriangleup}^\phi \delta_n,\delta_n\rangle=\mu_{\delta_n,\phi}(\bigtriangleup).
    $$ 
    Now we fix $ \phi\in\T $ such that (\ref{sharp1}) is satisfied. Denote that
    $$
        K_{\nu,N,C,\varepsilon}^\phi=\{j\in\N:u_j^\phi(n) \text{ is a } (\nu,N,C,\varepsilon)\text{-good eigenfunction of } L_{\lambda^{-1}v,\alpha,\phi}\}.
    $$ 
    Notice that $ \sharp K_{\nu,N,C,\varepsilon}^\phi $ is finite for any fixed $ \nu,N,C,\varepsilon $, and pick $ N\geq\max\{N_0(\delta),\frac{|n_0|}{\varepsilon}\} $, one has
    $$
        \sum_{|n-n_0|\leq N}|u_j^\phi(n)|^2>\sum_{|n|\leq N(1-\varepsilon)}|u_j^\phi(n)|^2>1-\mathcal{C}(C,\varepsilon,\nu,d)2^d N^{d-1}e^{-C\varepsilon N^\nu(1-\varepsilon)^\nu},
    $$ 
    for $ (\nu,N,C,\varepsilon)$-good eigenfunction $ u_j^\phi(n) $.

    Let $ \mathcal{E}_{C}(\phi)=\bigcup_{\varepsilon>0}\bigcup_{N>0}\mathcal{E}_{N,C,\varepsilon}^\phi $. We have
    $$
        \begin{aligned}
            \frac{1}{(2N)^d}\sum_{|n-n_0|\leq N}|\mu_{\delta_n,\phi}(\mathcal{E}_{C}(\phi))|&> \frac{1}{(2N)^d}\sum_{|n|\leq N(1-\varepsilon)}|\langle P_{\mathcal{E}_{N,C,\varepsilon}^\phi}^\phi \delta_n,\delta_n\rangle|\\
            &=\frac{1}{(2N)^d}\sum_{|n|\leq N(1-\varepsilon)}\sum_{j\in K_{\nu,N,C,\varepsilon}^\phi }|u_j^\phi(n)|^2\\
            &>\frac{\sharp K_{\nu,N,C,\varepsilon}^\phi }{(2N)^d}(1-\mathcal{C}(C,\varepsilon,\nu,d)2^d N^{d-1}e^{-C\varepsilon N^\nu(1-\varepsilon)^\nu})\\
            &>(1-\delta)(1-\mathcal{C}(C,\varepsilon,\nu,d)2^d N^{d-1}e^{-C\varepsilon N^\nu(1-\varepsilon)^\nu}).
        \end{aligned}
    $$ 
    Since $ \mathcal{E}_{C}(\phi)=\mathcal{E}_{C}(\phi+\langle n,\alpha\rangle) $, and notice that $ \langle P_{\bigtriangleup}^\phi \delta_n,\delta_n\rangle=\langle P_{\bigtriangleup}^{\phi+\langle m,\alpha\rangle} \delta_{n+m},\delta_{n+m}\rangle $, we can rewrite the inequalities above as
    $$
        \frac{1}{(2N)^d}\sum_{|n|\leq N}|\mu_{\delta_{n_0},\phi+\langle n,\alpha\rangle}(\mathcal{E}_{C}(\phi))|\geq (1-\delta)(1-\mathcal{C}(C,\varepsilon,\nu,d)2^d N^{d-1}e^{-C\varepsilon N^\nu(1-\varepsilon)^\nu}).
    $$ 
    Let $ N\to\infty $, and then let $ \delta\to 0 $, we have
    $$
        \int_{\T}|\mu_{\delta_{n_0},\phi}(\mathcal{E}_{C}(\phi))|\dif \phi=1
    $$ 
    by the ergodic theorem of $ \Z^d $ actions \cite{schmidt1996ergodic}. It follows that $ |\mu_{\delta_{n_0},\phi}(\mathcal{E}_{C}(\phi))|=1 $ for a.e. $ \phi $, for arbitrary fixed $ n_0\in\Z^d $. Note that $ u_j^\phi(n) $ decays sub-exponentially if $ E_j^\phi \in \mathcal{E}_{C}(\phi) $, which completes the proof.
\end{proof}
Denote $ \Theta_\gamma=\{\phi: \phi\in\mathrm{DC}_\alpha^d(\gamma,\tau)\} $. Note that $ |\bigcup_{\gamma>0}\Theta_\gamma|=1 $ implies that for any $ \delta>0 $, there exists $ \epsilon_0 $ such that if $ |\gamma'|\leq\epsilon_0 $, then $ |\Theta_{\gamma'}|\geq 1-\frac{\delta}{3} $. By ergodicity of rigid irrational rotations, we have 
$$
    \lim_{N\to\infty}\frac{1}{(2N)^d}\sum_{|n|\leq N}\chi_{\Theta_{\gamma'}}(\phi+\langle n,\alpha\rangle)=\int_{\T}\chi_{\Theta_{\gamma'}}(\phi)\dif \phi=|\Theta_{\gamma'}|\geq 1-\frac{\delta}{3}.
$$  
Thus we have for a.e. $ \phi\in\T $,
$$
    \sharp\{n:\phi+\langle n,\alpha\rangle\in\Theta_{\gamma'},|n|\leq N(1-\frac{\delta}{3})\}\geq (1-\delta)(2N)^d
$$ 
holds for $ N\geq N_0(\delta) $.

Then we show that for any $ N\geq N_0(\delta) $, any $ \rho(\alpha,S_{E_m}^{\lambda^{-1}v})=\phi+\langle m,\alpha\rangle \in\mathrm{DC}_\alpha^d(\gamma',\tau) $ with $ |m|\leq N(1-\frac{\delta}{3}) $ can produce $ (\nu,N,C,\varepsilon) $-good eigenfunctions. Consequently, we have (\ref{sharp1}) holds, then we can conclude Theorem \ref{thmlong}.

Suppose that $ \rho(\alpha,S_{E_m}^{\lambda^{-1}v})=\phi+\langle m,\alpha\rangle \in\mathrm{DC}_\alpha^d(\gamma',\tau)$, then by Theorem \ref{thm4.1}, for any $ r\in(0,r_0) $, when $ \lambda>\lambda_0(\alpha,d,\nu,r_0,r)$, there exists $ B_m\in G_r^\nu(2\T^d,\mathrm{SL}(2,\C)) $ (by adding a unitary) such that
$$
    B_m(\theta+\alpha)^{-1}S_{E_m}^{\lambda^{-1}v}B_m(\theta)=\begin{pmatrix}
        e^{\mathrm{i}2\pi(\phi+\langle m',\alpha\rangle)}&c\\
        0&e^{-\mathrm{i}2\pi(\phi+\langle m',\alpha\rangle)}
    \end{pmatrix},
$$ 
with estimates (omit other dependencies)
$$
    |B_m|_r\leq C_1(\gamma'), \quad|m-m'|\leq 2|\deg B_m|\leq C_2(\gamma').
$$ 

Let $ B_m(\theta)=\begin{pmatrix}
    b_{11}(\theta)& b_{12}(\theta)\\
    b_{21}(\theta)& b_{22}(\theta)
\end{pmatrix} $, a direct calculation shows that
$$
    (E-\lambda^{-1}v)b_{11}(\theta)=b_{11}(\theta-\alpha)e^{-\mathrm{i}2\pi(\phi+\langle m',\alpha\rangle)}+b_{11}(\theta+\alpha)e^{\mathrm{i}2\pi(\phi+\langle m',\alpha\rangle)},
$$ 
we denote $ z_{11}(\theta)=e^{\mathrm{i}2\pi\langle m',\theta\rangle}b_{11}(\theta) $, then one has 
\begin{equation} \label{fourier}
    (E-\lambda^{-1}v)z_{11}(\theta)=z_{11}(\theta-\alpha)e^{-\mathrm{i}2\pi\phi}+z_{11}(\theta+\alpha)e^{\mathrm{i}2\pi\phi}.
\end{equation} 
Taking the Fourier transformation of (\ref{fourier}), we have
\begin{equation} 
    \sum_{k\in\Z^d}\hat{z}_{11}(n-k)\hat{v}_k+2\lambda\cos 2\pi(\phi+\langle n,\alpha\rangle)\hat{z}_{11}(n)=\lambda E \hat{z}_{11}(n),
\end{equation} 
i.e., $ \{\hat{z}_{11}(n)\}_{n\in\Z^d} $ is an eigenfunction of the long range operator $ L_{\lambda^{-1}v,\alpha,\phi} $. By the fact that $ \det|B_m(\theta)|=1 $, we have 
$$
    2\lVert b_{11}\rVert_{L^2}=\lVert b_{11}\rVert_{L^2}+\lVert b_{21}\rVert_{L^2}\geq \lVert B_m\rVert_{0}^{-1},
$$ 
which implies $ \lVert \hat{z}_{11}\rVert_{\ell^2}=\lVert \hat{b}_{11}\rVert_{\ell^2}=\lVert b_{11}\rVert_{L^2}\geq (2\lVert B_m\rVert_0^{-1})$. Take $ u_{m}^{\phi}(n)=\frac{\hat{z}_{11}(n)}{\lVert \hat{z}_{11}\rVert_{\ell^2} } $, then it is a normalized eigenfunction of $ L_{\lambda^{-1}v,\alpha,\phi} $. Let 
$$
    2\varepsilon<1-(1-\frac{\delta}{3})^\nu-\frac{2\ln C_1}{r|2\pi|^\nu N^\nu}-\frac{|C_2|^\nu}{N^\nu}.
$$ 
Since $ u_{m}^{\phi}(n)=u_{m}^{\phi+\langle m',\alpha\rangle}(n-m') $ and the subadditivity of $ f(x)=x^\nu $, then we have
\begin{equation*} 
    \begin{aligned}
        |u_{m}^{\phi}(n)|&=|u_{m}^{\phi+\langle m',\alpha\rangle}(n-m')|\\
        &\leq |B_m|_{r}^2 e^{-r|2\pi (n-m')|^\nu}\\
        &\leq C_1^2e^{-r|2\pi|^{\nu}(\sum_{i=1}^d|n_i-m'_i|)^\nu}\\
        &\leq C_1^2e^{-r|2\pi|^{\nu}(\sum_{i=1}^d|n_i|-|m'_i|)^\nu}\\
        &\leq C_1^2e^{-r|2\pi|^{\nu}(|n|^\nu-|m'|^\nu)}\\
        &\leq C_1^2e^{r|2\pi|^{\nu}(|m|+|m-m'|)^\nu}e^{-r|2\pi n|^\nu}\\
        &\leq e^{r|2\pi|^{\nu} (\frac{2\ln C_1}{r|2\pi|^{\nu} }+|m|^{\nu}+ |C_2|^\nu)}e^{-r|2\pi n|^\nu}\\
        &\leq e^{|2\pi|^\nu r\varepsilon  |n|^{\nu}},
    \end{aligned}
\end{equation*} 
for $ |n|\geq N(1-\delta)  $, which means $ \{u_{m}^{\phi}(n)\}_{n\in\Z^d} $ is $ (\nu,N,|2\pi|^\nu r,\varepsilon) $-good. Notice that $ \varepsilon $ can be chosen as $ \varepsilon(\delta,N_0,C,\nu) $.

\section{Proof of Theorem \ref{thm1.2}}
In this section, we prove Theorem \ref{thm1.2} by the famous Moser-P\"oschel argument of quasi-periodic Schr\"odinger cocycles and our reducibility results. 
Rewrite the Schr\"odinger cocycle as \[S_E^v=A_E+F(\theta),\]
where \[A_E=\begin{pmatrix}
E&-1\\1&0
\end{pmatrix}, F(\theta)=\begin{pmatrix}
-v&0\\0&0
\end{pmatrix}.\]
We only need to consider \[E\in\Sigma_{v,\alpha}\subset [-2+\inf v,2+\sup v],\]
since $\R\backslash\Sigma_{v,\alpha}$ is uniformly hyperbolic. Thus the norm of $A_E$ is bounded uniformly with respect to $E$. Noting that 
\[
    A_E+F(\theta)=A_EG=A_E\begin{pmatrix}
1&0\\v&1
\end{pmatrix}, 
\]
and $ \mathrm{tr} G=2 $, by the exponential map, there exists $f\in G^\nu_{r_0}(\mathbb{T}^d,\mathrm{sl}(2,\R))$ such that 
\[
    A_E+F=A_Ee^{f}, \text{ with } |f|_{r_0}\sim|v|_{r_0}.
\]

Fix $r\in(0,r_0)$, we consider the cocycle $ (\alpha, S^v_{E_k^+}) $, where $ E_k^+\in\Sigma_{v,\alpha} $ is the right edge point of gap $G_k(v)$, then we have
$ 2\rho(\alpha,S_{E_k^+}^v)\equiv\langle k, \alpha\rangle\mod\Z$, and $ (\alpha,S_{E_k^+}^v) $ is not uniformly hyperbolic. By Theorem \ref{thm4.1}, there exists $\epsilon_*=\epsilon_*(\gamma,\tau,\nu,r_0,r,d)>0$ such that if $ |v|_{r_0}=\epsilon_0<\epsilon_*$, then there exist $ Z\in G^\nu_{r}(2\mathbb{T}^d,\mathrm{SL}(2,\R)) $, $c\in(0,1)$ ($  c= 0 $ iff it is a collapsed spectral gap, thus we need to do nothing), such that
\[ Z(\cdot+\alpha)^{-1} S^v_{E_k^+}Z(\cdot)=B=\begin{pmatrix}
		1 &c \\
		0 & 1
		\end{pmatrix}.\]

Noting that for any $\delta>0$, we have
\[  
    Z(\cdot+\alpha)^{-1} S^v_{E_k^+-\delta}Z(\cdot)=B-\delta P(\cdot),
\]
with \[ P(\cdot) := \begin{pmatrix}
z_{11}(\cdot)z_{12}(\cdot)-c z_{11}^2(\cdot)&-c z_{11}(\cdot)z_{12}(\cdot)+z^2_{12}(\cdot)\\
-z_{11}^2(\cdot)&-z_{11}(\cdot)z_{12}(\cdot)
\end{pmatrix}.\]

For $ f:\T^d\to \C $, let $ [f] :=\int_{\T^d} f(\theta) \dif\theta $. Using one step of averaging, one can construct a transformation $\widetilde{Z}$, which conjugates the cocycle $ (\alpha,B-\delta P(\cdot)) $ to $ (\alpha,e^{b_0-\delta b_1}+\delta^2P_1(\cdot)) $ where $$ b_0:=\begin{pmatrix}
		0&c\\0&0
		\end{pmatrix} ,  
		\quad b_1:=\begin{pmatrix}
		[z_{11}z_{12}]-\frac{c}{2}[z_{11}^2]&-c[z_{11}z_{12}]+[z_{12}^2]\\-[z_{11}^2]&-[z_{11}z_{12}]+\frac{c}{2}[z_{11}^2]
		\end{pmatrix}. $$
More precisely, we have the following lemma:
\begin{lemma}\label{lem6.1}
	For any $R\in(0,r]$, let $ D_R:=\sup_{n\in\mathbb{Z}^d} 4\gamma^{-3}|n|^{3\tau}e^{-\frac{R}{2}(2\pi|n|)^\nu}, $ if \\$ 0<\delta<4^{-1}D_R^{-1}|Z|_R^{-2} $, then there exist $ \widetilde{Z}\in G^\nu_{\frac{R}{2}}(\mathbb{T}^d,\mathrm{SL}(2,\R)) $ and $ P_1\in G^\nu_{\frac{R}{2}}(\mathbb{T}^d,\mathrm{gl}(2,\R))$ such that\begin{equation}\label{72}
		\widetilde{Z}(\cdot+\alpha)^{-1}(B-\delta P(\cdot))\widetilde{Z}(\cdot)=e^{b_0-\delta b_1}+\delta^2P_1(\cdot),
		\end{equation}
		with the estimates\begin{equation}\label{73}
		|\widetilde{ Z}-\mathrm{Id}|_{\frac{R}{2}}< 1,\quad|P_1|_{\frac{R}{2}}
	\leq 8(2+D_R)^2|Z|_R^4+\delta^{-1}c^2|Z|_R^2. 		
		\end{equation}
\end{lemma}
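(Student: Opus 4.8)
The plan is to perform a single averaging (KAM‐type) step for the cocycle $(\alpha,B-\delta P(\cdot))$, treating $\delta$ as the small parameter and $c<1$ as an auxiliary small quantity. I would look for the conjugation in the form $\widetilde Z(\theta)=\mathrm{Id}+\delta\widetilde W(\theta)$ with $[\widetilde W]=0$ (renormalising by $(\det(\mathrm{Id}+\delta\widetilde W))^{-1/2}$ at the end so that $\widetilde Z$ lands exactly in $\mathrm{SL}(2,\R)$). Expanding
$$\widetilde Z(\theta+\alpha)^{-1}(B-\delta P(\theta))\widetilde Z(\theta)=B+\delta\bigl(B\widetilde W(\theta)-\widetilde W(\theta+\alpha)B-P(\theta)\bigr)+O(\delta^{2}),$$
and requiring that the $O(\delta)$ term match that of $e^{b_{0}-\delta b_{1}}$ (note $e^{b_{0}}=B$ since $b_{0}^{2}=0$), one is led to the homological equation
$$\widetilde W(\theta+\alpha)B-B\widetilde W(\theta)=P(\theta)-[P],$$
the zero mode $[P]$ being exactly the piece that produces the new constant part. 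Using $\det P(\theta)\equiv 0$ and the pointwise identity $\mathrm{tr}\,P(\theta)=c\,P(\theta)_{21}$, a direct computation then identifies the traceless matrix $b_{1}$ in the statement as the unique one for which $e^{b_{0}-\delta b_{1}}=B-\delta[P]+O(\delta^{2})$.

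Next I would solve the homological equation mode by mode. Multiplying by $B^{-1}$ on the right turns it into $\widetilde W(\theta+\alpha)-B\widetilde W(\theta)B^{-1}=(P(\theta)-[P])B^{-1}$, whose $k$-th symbol is $e^{2\pi\mathrm{i}\langle k,\alpha\rangle}\,\mathrm{Id}-\operatorname{Ad}_{B}$. Since $B$ is parabolic, $\operatorname{ad}_{b_{0}}$ is nilpotent of order $3$ on $\mathrm{gl}(2,\R)$, so $\operatorname{Ad}_{B}$ is unipotent and the symbol is invertible for every $k\neq 0$ (as $(1,\alpha)$ is rationally independent), with
$$\bigl\|(e^{2\pi\mathrm{i}\langle k,\alpha\rangle}\,\mathrm{Id}-\operatorname{Ad}_{B})^{-1}\bigr\|\lesssim |e^{2\pi\mathrm{i}\langle k,\alpha\rangle}-1|^{-3}\le \gamma^{-3}|k|^{3\tau}$$
by $\alpha\in\mathrm{DC}_{d}(\gamma,\tau)$; this cube is the origin of the factor $\gamma^{-3}|n|^{3\tau}$ in $D_{R}$. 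Combining this with the Banach-algebra bound $|P|_{R}\le 4|Z|_{R}^{2}$ (convolution estimate for Gevrey norms: $P$ is quadratic in the entries $z_{11},z_{12}$ of $Z$, the explicit factors of $c$ being $<1$), and putting the whole weight $e^{R(2\pi|k|)^{\nu}}$ on the Fourier side of $(P-[P])B^{-1}$ while the residual $e^{-\frac R2(2\pi|k|)^{\nu}}$ is absorbed into $\sup_{k}\gamma^{-3}|k|^{3\tau}e^{-\frac R2(2\pi|k|)^{\nu}}$, yields $|\widetilde W|_{R/2}\le D_{R}|Z|_{R}^{2}$. Hence $0<\delta<\tfrac14 D_{R}^{-1}|Z|_{R}^{-2}$ gives $|\delta\widetilde W|_{R/2}<\tfrac14$, so $\widetilde Z$ is well defined, invertible, lies in $G^{\nu}_{R/2}(\mathbb{T}^{d},\mathrm{SL}(2,\R))$ after the determinant renormalisation, and satisfies $|\widetilde Z-\mathrm{Id}|_{R/2}<1$.

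Finally I would set $\delta^{2}P_{1}(\theta):=\widetilde Z(\theta+\alpha)^{-1}(B-\delta P(\theta))\widetilde Z(\theta)-e^{b_{0}-\delta b_{1}}$ and estimate it. Two kinds of terms contribute: the terms of order $\ge 2$ in $\delta$ in the Taylor expansion of the conjugation, each a product of at most two factors of weight $\lesssim (2+D_{R})|Z|_{R}^{2}$ (the quantities $|\widetilde W|_{R/2}$, $|P|_{R}$, $\|B\|$ all being $\lesssim (2+D_{R})|Z|_{R}^{2}$ after a crude bound), hence $\le 8(2+D_{R})^{2}|Z|_{R}^{4}$; and the discrepancy between $e^{b_{0}-\delta b_{1}}$ and $B-\delta[P]$ (together with the unremoved $\mathrm{Id}$-part of $P-[P]$), which involves the interaction of the parabolic part $b_{0}$ — of size $c$ — with the $c$-scaled part of $P$ and of $b_{1}$, and is therefore of genuine size $\delta c^{2}|Z|_{R}^{2}$, i.e. contributes $\delta^{-1}c^{2}|Z|_{R}^{2}$ to $P_{1}$. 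Adding the two bounds gives (\ref{73}).

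The main obstacle is the small-divisor step for the \emph{parabolic} constant $B$: one has to see that the loss is the \emph{cube} of the usual Diophantine divisor, coming from the order-$3$ nilpotency of $\operatorname{ad}_{b_{0}}$ on $\mathrm{gl}(2,\R)$, and then to organise the bookkeeping so that this loss collapses into the single constant $D_{R}$ (which is exactly what the $\sup_{n}$ in its definition achieves, since only half the Gevrey width is sacrificed) and so that the $\delta^{-1}c^{2}|Z|_{R}^{2}$-sized part of the remainder is isolated — this forces one to keep track of the $c$-grading of $P$ and $b_{1}$ rather than relying on crude norm bounds. The verification that the displayed $b_{1}$ is precisely the matrix the normal form requires is then a routine (if tedious) $2\times2$ computation.
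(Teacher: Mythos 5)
Your proposal is correct and essentially reproduces the paper's Appendix B argument: one averaging step, with the homological equation against the parabolic constant $B$ solved mode by mode so that the unipotent $\operatorname{Ad}_B$ yields the cubed small divisor $4\gamma^{-3}|n|^{3\tau}$, absorbed into $D_R$ at the cost of half the Gevrey width, and the remainder split into the $O(\delta^2)$ conjugation terms plus the $b_0$-sandwiched terms responsible for $\delta^{-1}c^2|Z|_R^2$ (the paper conjugates by $e^Y$ with $Y=-\delta\,\widetilde W\in\mathrm{sl}(2,\R)$ built from $G=-\delta B^{-1}P$, which makes your determinant renormalisation unnecessary, but that is cosmetic since your $\widetilde W$ is in fact traceless). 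The only imprecision is the claim that $b_1$ is the matrix for which $e^{b_0-\delta b_1}=B-\delta[P]+O(\delta^2)$: with the paper's explicit $b_1$ one has the exact identity $B-\delta[P]=\mathrm{Id}+(b_0-\delta b_1)+\tfrac12(b_0-\delta b_1)^2-\tfrac{\delta^2}{2}b_1^2$, so the match holds only up to an $O(\delta c^2)$ term coming from the $k\ge 3$ part of the exponential series, which is exactly the discrepancy you later (correctly) charge to the $\delta^{-1}c^2|Z|_R^2$ piece of the bound.
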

\begin{proof}
    The proof follows a similar line as Lemma 6.1 in \cite{leguil2017asymptotics} (see also \cite{cai2021polynomial}), we will give the proof in Appendix \ref{appendixB} for completeness.
\end{proof}

\noindent Thus $ \widetilde{Z} $ is homotopic to identity, we have\[ \rho(\alpha,B-\delta P(\cdot))=\rho(\alpha,e^{b_0-\delta b_1}+\delta^2P_1(\cdot)). \]

In order to show $ |G_k(v)|\leq\delta_1 $ for some $ \delta_1>0 $, by Gap Labelling Theorem, it suffices to show that $ \rho(\alpha, e^{b_0-\delta_1 b_1}+\delta_1^2P_1(\cdot)) >0 $. 

Let $ d(\delta):=\det(b_0-\delta b_1) $. By a direct computation, we get

\begin{align*}\label{77}
	d(\delta) &=-\delta[z_{11}^2]c+\delta^2([z_{11}^2][z_{12}^2]-[z_{11}z_{12}]^2)-\frac{\delta^2}{4}c^2[z_{11}^2]^2 \notag\\
	&=\delta([z_{11}^2][z_{12}^2]-[z_{11}z_{12}]^2)\left( \delta-\frac{[z_{11}^2](c+\frac{1}{4}c^2\delta[z_{11}^2])}{[z_{11}^2][z_{12}^2]-[z_{11}z_{12}]^2}\right) .
	\end{align*}

Let $\tilde{r} =\frac{r_0+r}{2}, \chi=\frac{\tilde{r}-r}{6\tilde{r}}$, and let $ \delta_1=c^{1-\chi} $. 
\begin{claim}
    $ d(\delta_1)\geq \frac{9}{4}c^2 $.
\end{claim}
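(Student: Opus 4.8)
\emph{Proof strategy.} I treat the Claim as a purely computational estimate fed by two inputs: a quantitative lower bound on the Gram determinant $[z_{11}^2][z_{12}^2]-[z_{11}z_{12}]^2$ coming from the $\mathrm{SL}(2,\R)$-structure of $Z$, and the effective bounds on $|Z|$ and on $c$ from Theorem \ref{thm4.1}(ii). Write $a:=[z_{11}^2]>0$, $b:=[z_{12}^2]$, $e:=[z_{11}z_{12}]$; since $\delta_1=c^{1-\chi}$, the displayed formula for $d(\delta)$ factors as
\[
d(\delta_1)=c^{2-2\chi}\Big(\,(ab-e^2)-c^{\chi}a-\tfrac{c^2}{4}a^2\,\Big),
\]
so it suffices to bound the bracket from below by a fixed positive constant while keeping the error terms $c^{\chi}a$ and $c^2a^2$ small; then $c^{2-2\chi}=c^{-2\chi}c^2$ with $c^{-2\chi}$ huge finishes it.

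\emph{Step 1: the Gram determinant.} From $S^v_{E_k^+}(\theta)Z(\theta)=Z(\theta+\alpha)B$ with $B=\left(\begin{smallmatrix}1&c\\0&1\end{smallmatrix}\right)$, comparing second rows gives $z_{21}(\theta)=z_{11}(\theta-\alpha)$ and $z_{22}(\theta)=z_{12}(\theta-\alpha)-c\,z_{11}(\theta-\alpha)$; substituting into $\det Z\equiv 1$ yields the pointwise identity
\[
z_{12}(\theta)z_{11}(\theta-\alpha)-z_{11}(\theta)z_{12}(\theta-\alpha)=-1-c\,z_{11}(\theta)z_{11}(\theta-\alpha).
\]
Decompose $z_{12}=\tfrac{e}{a}z_{11}+g$ with $g\perp z_{11}$ in $L^2(\T^d)$, so that $\|g\|_{L^2}^2=\tfrac{ab-e^2}{a}$. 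The left-hand side becomes $g(\theta)z_{11}(\theta-\alpha)-z_{11}(\theta)g(\theta-\alpha)$, whose integral equals $\int g(\theta)\big(z_{11}(\theta-\alpha)-z_{11}(\theta+\alpha)\big)\,d\theta$, bounded in modulus by $2\sqrt a\,\|g\|_{L^2}$ via Cauchy--Schwarz; while the integral of the right-hand side has modulus $\ge 1-c\big|\!\int z_{11}(\theta)z_{11}(\theta-\alpha)\,d\theta\big|\ge 1-ca$. Hence $2\sqrt a\,\|g\|_{L^2}\ge 1-ca$, i.e. $ab-e^2=a\|g\|_{L^2}^2\ge\tfrac14(1-ca)^2$.

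\emph{Step 2: killing the growth in $|k|$.} By Theorem \ref{thm4.1}(ii), for every $r''\in(0,r]$ one has $|Z|_{r''}\le D_1|k|^{\tau}e^{2r''(\pi|k|)^{\nu}}$ and $c=|\varphi|\le\epsilon_0^{3/5}e^{-r|2\pi k|^{\nu}}$; trivially $a,b\le|Z|_{r''}^2$. Choosing the small width $R:=\chi r\,2^{\nu-3}\in(0,r)$, one checks that in each of $ca$, $c^{\chi}a$ and $c^2a^2$ the net exponential factor $e^{(\,\cdots)|k|^{\nu}}$ has negative exponent, so (using $\sup_{k}|k|^{m\tau}e^{-c'|k|^{\nu}}<\infty$) each is bounded, uniformly in $k\in\Z^d\setminus\{0\}$, by $C(\gamma,\tau,d,\nu,r_0,r)\,\epsilon_0^{\kappa}$ for some $\kappa>0$.

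\emph{Step 3: conclusion.} Shrink $\epsilon_*$ so that for $\epsilon_0<\epsilon_*$ we have $ca\le\tfrac1{100}$, $c^{\chi}a\le\tfrac1{100}$, $\tfrac{c^2}{4}a^2\le\tfrac1{100}$, and also $\epsilon_0^{-6\chi/5}\ge\tfrac{45}{4}$. Then the bracket is $\ge\tfrac14(1-\tfrac1{100})^2-\tfrac1{50}>\tfrac15$, while $c^{-2\chi}\ge\epsilon_0^{-6\chi/5}\ge\tfrac{45}{4}$, so
\[
d(\delta_1)=c^{2-2\chi}\cdot(\text{bracket})\ \ge\ \tfrac15\,c^{-2\chi}\,c^2\ \ge\ \tfrac94\,c^2 .
\]

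\emph{Main obstacle.} The one genuinely delicate point is Step 1 together with the realization that the bound $ab-e^2\ge\tfrac14(1-ca)^2$ is useful only once $ca=c[z_{11}^2]$ is small; since $|Z|$ grows (sub-)exponentially in $|k|$ whereas $c$ decays at a comparable rate, this forces one to measure $Z$ at a sufficiently small width $R$ in Step 2 (and, correspondingly, to keep $|P_1|$ under control through Lemma \ref{lem6.1} at that same width). Everything else is routine bookkeeping with the KAM estimates.
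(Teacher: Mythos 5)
Your proof is correct, but it takes a genuinely different route from the paper at the key point. The paper simply cites Lemma \ref{lem6.3} (from \cite{leguil2017asymptotics}), whose two inequalities $\frac{[z_{11}^2]}{[z_{11}^2][z_{12}^2]-[z_{11}z_{12}]^2}\leq\frac{1}{2}c^{-\chi}$ and $[z_{11}^2][z_{12}^2]-[z_{11}z_{12}]^2\geq 8c^{2\chi}$ are plugged into the factored form of $d(\delta_1)$ after verifying the hypothesis $c^{\chi/2}|Z|_R\leq\frac14$ (which is exactly the same ``measure $Z$ at a small width $R$ so that the decay of $c$ in $|k|$ beats the growth of $|Z|_R$'' bookkeeping you perform in your Step 2). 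You instead bypass Lemma \ref{lem6.3} entirely: from $S^v_{E_k^+}Z(\cdot)=Z(\cdot+\alpha)B$ you recover $z_{21},z_{22}$ in terms of shifted $z_{11},z_{12}$, feed this into $\det Z\equiv 1$, and extract the Gram-determinant bound $[z_{11}^2][z_{12}^2]-[z_{11}z_{12}]^2\geq\frac14(1-c[z_{11}^2])^2$ by orthogonal decomposition and Cauchy--Schwarz; this is essentially a self-contained reconstruction of the mechanism behind Lemma \ref{lem6.3}, and it even gives a stronger (order-one rather than $c^{2\chi}$) lower bound, after which your factorization $d(\delta_1)=c^{2-2\chi}\bigl((ab-e^2)-c^{\chi}a-\tfrac{c^2}{4}a^2\bigr)$ and the surplus factor $c^{-2\chi}$ close the estimate. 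The arithmetic checks out (the exponent with $R=\chi r\,2^{\nu-3}$ is indeed negative in each of $ca$, $c^{\chi}a$, $c^2a^2$, and $\frac14(0.99)^2-\frac1{50}>\frac15$, $\frac15 c^{-2\chi}\geq\frac94$ for $\epsilon_0$ small), and your use of the width-$r$ bound $c\leq\epsilon_0^{3/5}e^{-r|2\pi k|^{\nu}}$ versus the paper's width $\frac{r}{1-\chi}$ is only a harmless renaming of parameters. Two cosmetic points: you should note $[z_{11}^2]>0$ (immediate, since $z_{11}\equiv 0$ would force $\det Z=0$), and since $Z$ lives on $2\T^d$ the averages and $L^2$ pairings in your Step 1 should be taken over $2\T^d$ — a normalization issue the paper shares and which changes nothing in the argument. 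What the paper's route buys is brevity via the ready-made lemma; what yours buys is a self-contained and slightly sharper estimate at the cost of redoing that lemma's core computation.
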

\begin{proof}
    To get a lower bound of $d(\delta_1)$, we need to use the following lemma in \cite{leguil2017asymptotics}.

\begin{lemma}[\cite{leguil2017asymptotics}]\label{lem6.3}
	For any $ \kappa\in(0,\frac{1}{4}) $, if $ c^{\frac{\kappa}{2}}|Z|_{\T^d} \leq\frac{1}{4} $, we have\begin{align*}
	0<\frac{[z_{11}^2]}{[z_{11}^2][z_{12}^2]-[z_{11}z_{12}]^2}\leq\frac{1}{2}c^{-\kappa},\quad
	[z_{11}^2][z_{12}^2]-[z_{11}z_{12}]^2\geq 8c^{2\kappa}.
	\end{align*}
\end{lemma}
   Reviewing (\ref{3.15}), we get  $c\leq\epsilon_{0}^{\frac{3}{5}}e^{-\frac{r}{1-\chi}|2\pi k|^{{\nu}}}$, $|Z|_{R}\leq D' e^{3R(\pi|k|)^{{\nu}}}$. 
 Then we set $R=\frac{1}{8}\frac{\chi}{1-\chi}r$, for $\epsilon_0$ sufficient small, we have $$ 0<\delta_1\leq\epsilon_0^{\frac{1}{2}}e^{-r|k|^{{\nu}}}<4^{-1}D_{R}^{-1}|Z|_R^{-2} .$$ Hence we can apply Lemma \ref{lem6.1}, and conjugate the system to the cocycle $ (\alpha, e^{b_0-\delta_1 b_1}+\delta_1^2P_1(\cdot)) $.
	
	Since $ c^{\frac{\chi}{2}}|Z|_R\leq \frac{1}{4} $, then we can apply Lemma \ref{lem6.3}, and get
	\[ 
        \frac{[z_{11}^2]c}{[z_{11}^2][z_{12}^2]-[z_{11}z_{12}]^2}\leq\frac{1}{2}\delta_1,
    \]
	hence we have
	\begin{equation}\label{86}
	d(\delta_1)\geq c^{1-\chi}\cdot 8c^{2\chi}\cdot c^{1-\chi}\left( \frac{1}{2}-\frac{1}{8}c^{2-\chi}[z_{11}^2]\right) \geq 
\frac{9}{4}c^2.
	\end{equation}
\end{proof}

	By the expression of $ b_0 $ and $ b_1 $, we have
	\begin{equation}\label{87}
	|b_0-\delta_1 b_1|\leq c+\delta_1(1+c)|Z|^2_{R}\leq\frac{3}{2}c^{1-\chi}|Z|_R^2.
	\end{equation}
	In view of Lemma 8.1 in \cite{hou}, there exists $ \mathcal{P}\in\mathrm{SL}(2,\R) $ with $ |\mathcal{P}|\leq2(\frac{|b_0-\delta_1 b_1|}{\sqrt{d(\delta_1)}})^{\frac{1}{2}} $, such that 
	\[ \mathcal{P}^{-1}e^{b_0-\delta_1 b_1}\mathcal{P}=R_{\sqrt{d(\delta_1)}}. \]
	Combining (\ref{86}) and (\ref{87}), we have
	\[\frac{|b_0-\delta_1 b_1|}{\sqrt{d(\delta_1)}}\leq\frac{\frac{3}{2}c^{1-\chi}|Z|_R^2}{\sqrt{\frac{9}{4}c^2}}\leq c^{-\chi}|Z|_R^2.\]
	According to (\ref{lem3.2}), and in view of (\ref{73}), 
    
    \begin{equation*} 
        \begin{split}
            | \rho(\alpha, e^{b_0-\delta_1 b_1}+\delta_1^2P_1)-\sqrt{d(\delta_1)}| &\leq \delta_1^2|\mathcal{P}|^2|P_1|_{\frac{R}{2}} \\
            &\leq 32(2+D_R)^2 c^{2-3\chi}|Z|_R^6+4c^{3-2\chi}|Z|_R^2.
        \end{split}
    \end{equation*} 
Since 
\[ 32(2+D_R)^2 c^{1-3\chi}|Z|_R^6+4c^{2-2\chi}|Z|_R^2<\frac{1}{2}, \]
which implies that
\[
    \rho(\alpha, e^{b_0-\delta_1 b_1}+\delta_1^2P_1)\geq \sqrt{d(\delta_1)}-| \rho(\alpha, e^{b_0-\delta_1 b_1}+\delta_1^2P_1)-\sqrt{d(\delta_1)}| >0, 
\]
this means $ |G_k(v)|\leq\delta_1\leq\epsilon_0^{\frac{1}{2}}e^{-r|2\pi k|^{{\nu}}} $.

\section{Proof of Theorem \ref{intervalspec}}
In this section, we will prove Theorem \ref{intervalspec} by the following approach used in Takase \cite{takase2021spectra}.
It is known that the integrated density of states (IDS) is $ \frac{1}{2} $-H\"older continuous in small coupling regime.
\begin{theorem}[\cite{Cai2017SharpHC}]\label{Caiao}
    Let $ \alpha\in \mathrm{DC}_d(\gamma,\tau) $, $ v\in C^k(\T^d,\R) $ with $ k\geq 550\tau $. Then there exists $ \epsilon_*=\epsilon_*(\gamma,\tau,d,k) $ such that if $ |v|<\epsilon_* $, then the integrated density of states $ N_{v,\alpha} $ is $ \frac{1}{2}$-H\"older continuous: 
    $$
        |N_{v,\alpha}(E+\epsilon)-N_{v,\alpha}(E-\epsilon)|\leq C_0 \epsilon^{\frac{1}{2}}, \ \forall \epsilon>0,\ \forall E\in\R,
    $$ 
    where $ C_0 $ depends only on $ \gamma,\tau,d $.
\end{theorem}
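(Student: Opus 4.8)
\smallskip
\noindent\textbf{Proof strategy.} The plan is to reduce the $\tfrac12$-H\"older continuity of the IDS to a quantitative estimate for the fibered rotation number, and to obtain the latter from a finitely differentiable version of the quantitative almost reducibility of Section 4. By the Johnson--Moser relation \cite{gaplabel}, $N_{v,\alpha}$ agrees, up to an affine change of variable, with the map $E\mapsto\rho(\alpha,S^v_E)$, where $(\alpha,S^v_E)$ is the Schr\"odinger cocycle. Hence it suffices to produce $C_0=C_0(\gamma,\tau,d)$ such that $|\rho(\alpha,S^v_{E+\epsilon})-\rho(\alpha,S^v_{E-\epsilon})|\le C_0\epsilon^{1/2}$ for all $E\in\R$ and all $\epsilon>0$. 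Since the rotation number is bounded, the case $\epsilon\gtrsim 1$ is trivial, so I would fix $E\in\R$ and small $\epsilon>0$ and set $E_\pm:=E\pm\epsilon$.

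First I would run the KAM scheme for $C^k$ cocycles --- the finitely differentiable analogue of Proposition \ref{prop} and Theorem \ref{thm4.1}, in the spirit of \cite{leguil2017asymptotics,cai2021polynomial} --- applied to $(\alpha,S^v_E)$. In the $C^k$ category a resonant step inflates the conjugacy only by a power of the resonance scale $|n_j|^{\tau}$, with no exponential factor, at the cost of a fixed loss of derivatives; carried out up to a scale $N=N(\epsilon)$, this produces $B_N\in C^{k'}(2\T^d,\mathrm{SL}(2,\R))$ and a constant $A_N=A_N(E)\in\mathrm{SL}(2,\R)$ with
\[
    B_N(\theta+\alpha)^{-1}S^v_{E}(\theta)B_N(\theta)=A_Ne^{f_N(\theta)},\quad |f_N|_{C^0}\le\epsilon_N,\quad |B_N|_{C^0}\le\Theta(\epsilon),
\]
where $\Theta$ is a polynomial bound. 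I would stop at a scale where $\epsilon_N$ is comparable to a fixed small power of $\epsilon$ and, crucially, $|B_N|_{C^0}^2\,\epsilon\ll\epsilon_N$. Since $S^v_{E_\pm}=S^v_E+(E_\pm-E)\,\mathrm{diag}(1,0)$ and $B_N$ does not depend on the energy, the \emph{same} $B_N$ conjugates $(\alpha,S^v_{E_\pm})$ to $A_Ne^{f_N}+(E_\pm-E)R_\pm$ with $|R_\pm|_{C^0}\le|B_N|_{C^0}^2$, so by the choice of $N$ both conjugated cocycles are $O(\epsilon_N)$-close to the \emph{same} constant $A_N$. As $B_N$ has a fixed degree, the terms $\tfrac{\langle\deg B_N,\alpha\rangle}{2}$ cancel in the difference, and
\[
    |\rho(\alpha,S^v_{E_+})-\rho(\alpha,S^v_{E_-})|=|\rho(\alpha,A_Ne^{g_+})-\rho(\alpha,A_Ne^{g_-})|,\quad |g_\pm|_{C^0}\lesssim\epsilon_N.
\]

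The core of the argument is the $\tfrac12$-H\"older dependence of the rotation number of a near-constant cocycle. Writing $A_N=e^{a_N}$ with $a_N=a_N(E)\in\mathrm{sl}(2,\R)$ and normalizing $a_N$ by Lemma 8.1 of \cite{hou} exactly as in the proof of Theorem \ref{thm1.2}, one has $\rho(\alpha,e^{a_N})=\tfrac1{2\pi}\sqrt{\max\{\det a_N,0\}}$ up to an error controlled by $|a_N|$, together with the $\mathrm{SL}(2,\R)$ perturbation bound $|\rho(\alpha,A_Ne^{g})-\rho(\alpha,A_N)|\lesssim|g|_{C^0}^{1/2}$, the square root reflecting the behaviour at a parabolic resonance (cf. \eqref{lem3.2}). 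Because $E\mapsto S^v_E$ is affine and $B_N$ is fixed, $E\mapsto a_N(E)$ is Lipschitz, hence so is $\det a_N(E)$, whence $|\sqrt{\max\{\det a_N(E_+),0\}}-\sqrt{\max\{\det a_N(E_-),0\}}|\lesssim\epsilon^{1/2}$. Inserting this into the last display gives $|\rho(\alpha,S^v_{E_+})-\rho(\alpha,S^v_{E_-})|\lesssim\epsilon_N^{1/2}+\epsilon^{1/2}$, and optimizing the stopping scale $N$ yields $\le C_0\epsilon^{1/2}$ with $C_0$ depending only on $\gamma,\tau,d$.

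The hard part is the matching condition $|B_N|_{C^0}^2\,\epsilon\ll\epsilon_N$ in the finitely differentiable setting. Unlike the analytic or Gevrey cases, here there is no super-exponential decay per step and no exponential gain with the scale: each resonant step costs a fixed number of derivatives and multiplies the conjugacy by a power of $|n_j|^{\tau}$, while $\epsilon_N$ decreases only polynomially. Verifying that finitely many steps reach a scale where the matching condition holds and that the accumulated constants and derivative losses remain affordable is precisely what forces the hypothesis $k\ge 550\tau$; this bookkeeping is the technical heart of \cite{Cai2017SharpHC}.
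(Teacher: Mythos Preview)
The paper does not prove this statement: Theorem~\ref{Caiao} is quoted verbatim from \cite{Cai2017SharpHC} and invoked as a black box in the proof of Lemma~\ref{thicklem}, with no argument given here. So there is no ``paper's own proof'' to compare your proposal against.

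That said, your sketch is broadly faithful to how the result is actually obtained in \cite{Cai2017SharpHC}: one runs a $C^k$ almost-reducibility scheme for the Schr\"odinger cocycle, stops at a scale matched to the energy perturbation $\epsilon$, conjugates both $S^v_{E\pm\epsilon}$ by the \emph{same} $B_N$, and then extracts the $\tfrac12$-H\"older bound from the square-root behaviour of the rotation number near a constant cocycle. The point you flag---that in the $C^k$ category the decay of $\epsilon_N$ is only polynomial and each resonant step costs a fixed number of derivatives, so the matching condition $|B_N|_{C^0}^2\epsilon\ll\epsilon_N$ forces a lower bound like $k\ge 550\tau$---is indeed the technical crux of \cite{Cai2017SharpHC}. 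Your outline is a reasonable roadmap, but the actual proof requires carrying out that bookkeeping, which you defer; within the present paper nothing more is needed since the theorem is simply cited.
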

With the help of Theorem \ref{Caiao}, we have the following lemma.
\begin{lemma}\label{thicklem}$ \lim_{|v_i|\to 0}\tau(\Sigma_i)=+\infty. $
\end{lemma}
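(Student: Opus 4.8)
The plan is to bound the thickness $\tau(\Sigma_i)$ from below by controlling the ratio ``bridge length over gap length'' uniformly over all boundary points of bounded gaps of $\Sigma_i$, and to show this bound tends to $+\infty$ as $|v_i|\to 0$. The two ingredients are: (a) an \emph{upper} bound for the length of every bounded gap $G_k(v_i)$, which is exactly Theorem \ref{thm1.2}: $|G_k(v_i)|\le \epsilon_0^{1/2}e^{-r|2\pi k|^\nu}$ where $\epsilon_0=|v_i|_{r_0}$; and (b) a \emph{lower} bound for the length of the bridge at any boundary point of $G_k(v_i)$, which I would extract from the $\frac12$-Hölder continuity of the integrated density of states (Theorem \ref{Caiao}), together with the Gap-Labelling theorem controlling how much IDS-mass lies between consecutive gaps.

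First I would fix a boundary point $u$ of a bounded gap $U=G_k(v_i)$ and estimate $\ell(C)$ for the bridge $C$ at $u$. By definition $C$ extends away from $u$ until it meets a gap $V$ with $\ell(V)\ge\ell(U)=|G_k(v_i)|$. By the gap estimate (a), any gap $V$ of length $\ge|G_k(v_i)|\ge$ (something) must have label $m$ with $|m|$ controlled: from $\epsilon_0^{1/2}e^{-r|2\pi m|^\nu}\ge\epsilon_0^{1/2}e^{-r|2\pi k|^\nu}$ we get $|m|\le|k|$, and more usefully, only \emph{finitely many} gaps are as large as $|G_k(v_i)|$, with a quantitative bound on their labels depending on $k$ and $\epsilon_0$. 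Between $u$ and the nearest such larger gap, the IDS increases; I would use that the IDS on $\Sigma_i$ jumps by a definite amount (related to $\langle m,\alpha\rangle\bmod\Z$, hence bounded below by the Diophantine condition $\ge\gamma/|m|^\tau$) across the labelled gaps, combined with the $\frac12$-Hölder bound $|N(E+\epsilon)-N(E-\epsilon)|\le C_0\epsilon^{1/2}$ run in reverse: to accumulate a given amount $\Delta$ of IDS mass one needs an interval of length at least $(\Delta/C_0)^2$. This yields $\ell(C)\gtrsim c(\gamma,\tau,d,k)$ independent of how small $|v_i|$ is (the relevant IDS mass is a fixed spectral quantity, not vanishing with the potential).

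Combining, for every boundary point $u$ of every bounded gap,
\[
\tau(\Sigma_i,u)=\frac{\ell(C)}{\ell(U)}\ \ge\ \frac{c(\gamma,\tau,d,k)}{\epsilon_0^{1/2}e^{-r|2\pi k|^\nu}},
\]
and the subtle point is that the numerator's $k$-dependence is \emph{milder} than the exponential gain $e^{r|2\pi k|^\nu}$ in the denominator — indeed $c(\gamma,\tau,d,k)$ should be at worst polynomial (or sub-exponential) in $|k|$, coming from Diophantine losses $\gamma^2/|k|^{2\tau}$ — so the infimum over $k$ of this ratio is attained at bounded $|k|$ and behaves like $\epsilon_0^{-1/2}$ times a constant. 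Taking the infimum over all boundary points gives $\tau(\Sigma_i)\ge C(\gamma,\tau,d)\,\epsilon_0^{-1/2}\to+\infty$ as $|v_i|_{r_0}=\epsilon_0\to 0$, which is the claim.

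\textbf{Main obstacle.} The delicate step is step (b): making the lower bound on bridge lengths genuinely uniform in $|v_i|$ and controlling its $k$-dependence. One must argue that the amount of IDS mass separating $G_k(v_i)$ from the next strictly larger gap is bounded below by a quantity that does not degenerate as $v_i\to 0$ — this is where the rigidity of the IDS (it takes the fixed values $\langle m,\alpha\rangle\bmod\Z$ on gaps regardless of the size of $v_i$) must be exploited carefully, and where one must be sure the ``next larger gap'' is not so far away that the bridge is artificially long but rather that the accounting of labels versus $k$ is tight enough to keep $c(\gamma,\tau,d,k)$ from growing faster than $e^{r|2\pi k|^\nu}$. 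Once this uniform-in-$v_i$, controlled-in-$k$ bridge estimate is in hand, the conclusion is immediate from the definition of thickness.
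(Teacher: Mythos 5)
Your overall scheme is the same as the paper's: bound each gap above by Theorem \ref{thm1.2}, bound the bridge below by running the $\frac{1}{2}$-H\"older continuity of the IDS (Theorem \ref{Caiao}) in reverse across the IDS increment forced by gap labelling and the Diophantine condition, and take the quotient. But there is a genuine gap at the step where you control the label of the gap that terminates the bridge. From $\ell(G_m)\geq\ell(G_k)$ and the gap estimate you can only conclude $\epsilon_0^{1/2}e^{-r|2\pi m|^\nu}\geq\ell(G_m)\geq\ell(G_k)$; since the actual length $\ell(G_k)$ may be far smaller than its own upper bound $\epsilon_0^{1/2}e^{-r|2\pi k|^\nu}$ (the gap can be nearly collapsed), this does \emph{not} give $|m|\leq|k|$, and $|m|$ --- hence the Diophantine loss $\gamma/|m-k|^\tau$ in your bridge bound --- is not controlled by $k$ at all. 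Consequently your displayed inequality $\tau(\Sigma_i,u)\geq c(\gamma,\tau,d,k)/\bigl(\epsilon_0^{1/2}e^{-r|2\pi k|^\nu}\bigr)$ with $c$ polynomial in $|k|$ is unjustified: the numerator really depends on $|m-k|$, which can be arbitrarily large relative to $|k|$. This is exactly the ``delicate step'' you flag in your last paragraph, but flagging it does not resolve it.

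The paper closes this by normalizing on the scale of the \emph{actual} gap length rather than the label: it sets $\zeta(k)=\bigl[\frac{1}{-r}\log\bigl(\epsilon_0^{-\frac{1}{2}}\ell(G_k(v))\bigr)\bigr]^{\frac{1}{\nu}}$, so that $\ell(G_k)=\epsilon_0^{1/2}e^{-r\zeta(k)^\nu}$ exactly and, applying the gap estimate to both $G_k$ and the (at least as long) terminating gap $G_{k'}$, one gets $\zeta(k)\geq\max\{|k|,|k'|\}$, hence $|k'-k|\leq 2\zeta(k)$. The thickness ratio is then bounded below by $\frac{\gamma^2}{C_0^2\epsilon_0^{1/2}}\cdot\frac{e^{r\zeta(k)^\nu}}{(2\zeta(k))^{2\tau}}$, and since $\sup_{t>0}t^{2\tau}e^{-rt^\nu}<\infty$, the infimum over all gap edges is at least a constant times $\epsilon_0^{-1/2}$, which gives the lemma. (The paper also treats separately the case where the bridge reaches $\sup\Sigma_i$, where the IDS increment is $|\langle k,\alpha\rangle|_{\T}$; your sketch leaves this case implicit, though it is handled by the same mechanism.) With this $\zeta(k)$ device inserted at your step (b), your argument becomes precisely the paper's proof; without it, the key estimate does not close.
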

\begin{proof}
    Suppose $ 0<|v_i|<\epsilon_0 $, $ \epsilon_0 $ sufficiently small, such that Theorem \ref{thm1.2} and Theorem \ref{Caiao} hold. By the famous Gap-Labelling theorem, every bounded gap can be labelled by $ G_k(v)=[E_k^-,E_k^+] $ with $ k\neq 0 $. Without loss of generality, we only consider the bridge of $ \Sigma_i $ at $ E_k^+ $, denoted by $ C_k^+(v)=[E_k^+,F_k^+] $, by the definition of bridge, $ F_k^+\in\Sigma_i $. Thus 
    $$
        \tau(\Sigma,E_k^+)=\frac{|F_k^+-E_k^+|}{|E_k^+-E_k^-|}.
    $$ 
    Since $ \Sigma_i $ is a Cantor set with no isolated points, we have
    $$
        E_k^+<F_k^+\leq \sup\Sigma_i.
    $$ 
    Case 1. $ F_k^+=\sup\Sigma_i $. Therefore
    $$
        |N_{v,\alpha}(F_k^+)-N_{v,\alpha}(E_k^+)|=|\langle k,\alpha\rangle|_{\T}
    $$ 
    \begin{align*}
        \frac{|F_k^+-E_k^+|}{|E_k^+-E_k^-|}\geq\frac{(\frac{1}{C_0})^{2}|N_{v,\alpha}(F_k^+)-N_{v,\alpha}(E_k^+)|^2}{\epsilon_0^{\frac{1}{2}}e^{-r|2\pi k|^{{\nu}}}}\geq \frac{\gamma^2}{C_0^2\epsilon_0^{\frac{1}{2}}}\frac{1}{e^{-r|2\pi k|^{{\nu}}}|k|^{2\tau}}.
    \end{align*}
    Case 2. $ F_k^+<\sup\Sigma_i $. Then $ F_k^+ $ is the left boundary point of some nonempty gap $ G_{k'}(v)$ with $ \ell(G_{k}(v))<\ell(G_{k'}(v)) $. For any nonempty gap $ G_{k}(v) $, 
    $$
        \zeta(k)=\left[\frac{1}{-r}\log[\epsilon_0^{-\frac{1}{2}}\ell(G_{k}(v))]\right]^{\frac{1}{\nu}}
    $$  
    is well-defined. Thus by the spectral gap estimates, $ \ell(G_{k}(v))\leq\epsilon_0^{\frac{1}{2}}e^{-r|2\pi k|^{{\nu}}} $, we have 
    $$
        \zeta(k)\geq\zeta(k') \text{ and } \zeta(k)\geq\max\{|k|,|k'|\}.
    $$ 
    Therefore
    $$
        |N_{v,\alpha}(F_k^+)-N_{v,\alpha}(E_k^+)|=|\langle k'-k,\alpha\rangle|_{\T}
    $$ 
    \begin{align*}
        \frac{|F_k^+-E_k^+|}{|E_k^+-E_k^-|}\geq\frac{(\frac{1}{C_0})^{2}|N_{v,\alpha}(F_k^+)-N_{v,\alpha}(E_k^+)|^2}{\epsilon_0^{\frac{1}{2}}e^{-r\zeta(k)^{{\nu}}}}\geq \frac{\gamma^2}{C_0^2\epsilon_0^{\frac{1}{2}}}\frac{1}{e^{-r\zeta(k)^{{\nu}}}|2\zeta(k)|^{2\tau}}.
    \end{align*}
    Notice that 
    $$
        \tau(\Sigma)=\inf_u \tau(\Sigma,u)=\inf_k \frac{|F_k^+-E_k^+|}{|E_k^+-E_k^-|}.
    $$ 
    Since $ e^{-r\zeta(k)^{{\nu}}}|2\zeta(k)|^{2\tau}\leq e^{-r|2\pi k|^{{\nu}}}|2k|^{2\tau}\to 0 $ when $ k $ large, thus there is a maximum $ M $ such that $ e^{-r\zeta(k)^{{\nu}}}|2\zeta(k)|^{2\tau}\leq M $, then let $ \epsilon_0\to 0 $, we have
    $
        \lim_{|v_i|\to 0}\tau(\Sigma_i)=+\infty.
    $
\end{proof}
We also need the following lemma in \cite{takase2021spectra}.
\begin{lemma}[\cite{takase2021spectra}]\label{distHaus}
    Let $ A:\mathcal{H}\to\mathcal{H} $ and $ B:\mathcal{H}\to\mathcal{H} $ be bounded self-adjoint operators. Then 
    $$
        |\diam\ \sigma(A)-\diam\ \sigma(B)|\leq 2\mathrm{dist}_{\mathrm{Haus}}(\sigma(A),\sigma(B))\leq 2\lVert A-B\rVert.
    $$ 
\end{lemma}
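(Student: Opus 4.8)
The plan is to split the displayed chain into two independent pieces, neither of which uses anything specific about Schr\"odinger operators: the right-hand inequality $\mathrm{dist}_{\mathrm{Haus}}(\sigma(A),\sigma(B))\le\|A-B\|$ is the classical Lipschitz stability of the spectrum under self-adjoint perturbations, while the left-hand inequality $|\diam\,\sigma(A)-\diam\,\sigma(B)|\le 2\,\mathrm{dist}_{\mathrm{Haus}}(\sigma(A),\sigma(B))$ is a general fact about the diameter functional on nonempty compact subsets of a metric space. Since $A,B$ are bounded, $\sigma(A)$ and $\sigma(B)$ are nonempty compact subsets of $\R$, so both quantities on the two sides are well defined and finite, and it suffices to establish the two pieces separately and concatenate them.

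For the right-hand inequality I would argue via the resolvent identity. Recall that for a bounded self-adjoint operator $B$ one has $\sigma(B)\subset\R$ and, for every $\lambda\notin\sigma(B)$, $\|(B-\lambda)^{-1}\|=\mathrm{dist}(\lambda,\sigma(B))^{-1}$ (spectral theorem, or the spectral-radius formula applied to the normal operator $(B-\lambda)^{-1}$). Fix $\lambda\in\sigma(A)$ and suppose, for contradiction, that $\mathrm{dist}(\lambda,\sigma(B))>\|A-B\|$; in particular $\lambda\notin\sigma(B)$, so $(B-\lambda)^{-1}$ exists. Writing
$$
A-\lambda=(B-\lambda)\bigl(I+(B-\lambda)^{-1}(A-B)\bigr),
$$
we have $\|(B-\lambda)^{-1}(A-B)\|\le\|A-B\|\,\mathrm{dist}(\lambda,\sigma(B))^{-1}<1$, so the second factor is invertible by the Neumann series and hence $A-\lambda$ is invertible, contradicting $\lambda\in\sigma(A)$. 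Therefore every $\lambda\in\sigma(A)$ satisfies $\mathrm{dist}(\lambda,\sigma(B))\le\|A-B\|$; exchanging the roles of $A$ and $B$ gives the symmetric statement, and together they yield $\mathrm{dist}_{\mathrm{Haus}}(\sigma(A),\sigma(B))\le\|A-B\|$.

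For the left-hand inequality, let $K_1,K_2$ be nonempty compact subsets of $\R$ and set $\delta=\mathrm{dist}_{\mathrm{Haus}}(K_1,K_2)$. By compactness choose $x,y\in K_1$ with $|x-y|=\diam K_1$. Since $K_1$ is contained in the closed $\delta$-neighbourhood of $K_2$, pick $x',y'\in K_2$ with $|x-x'|\le\delta$ and $|y-y'|\le\delta$; then
$$
\diam K_2\ge|x'-y'|\ge|x-y|-|x-x'|-|y-y'|\ge\diam K_1-2\delta.
$$
Swapping $K_1$ and $K_2$ gives $|\diam K_1-\diam K_2|\le 2\delta$. Applying this with $K_1=\sigma(A)$ and $K_2=\sigma(B)$ and combining with the previous paragraph completes the proof.

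I do not expect a serious obstacle here; the only point that genuinely requires care is the resolvent-norm identity $\|(B-\lambda)^{-1}\|=\mathrm{dist}(\lambda,\sigma(B))^{-1}$, which uses self-adjointness in an essential way (it fails for general bounded operators, where one only gets $\|(B-\lambda)^{-1}\|\ge\mathrm{dist}(\lambda,\sigma(B))^{-1}$), together with the elementary observation that $\sigma(A),\sigma(B)$ are nonempty and compact so that all the relevant suprema are attained.
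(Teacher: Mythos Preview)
Your proof is correct. The paper does not prove this lemma at all: it is quoted from \cite{takase2021spectra} and used as a black box, so there is no in-paper argument to compare against. Your two-step argument (Lipschitz stability of the spectrum via the resolvent identity and the self-adjoint resolvent-norm formula, together with the $2$-Lipschitz dependence of the diameter on the Hausdorff distance) is the standard proof and is exactly what one would expect the cited reference to contain.
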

\noindent Now it suffices to check the conditions of Theorem \ref{astels}. For each set $ \Sigma_i $, by Lemma \ref{distHaus}, one has
$ 
\lim\limits_{|v_i|\to 0}\mathrm{dist}_{\mathrm{Haus}}(\Sigma_i,[-2,2])=0 \text{ and } \lim\limits_{|v_i|\to 0}|\diam \Sigma_i-4|=0, 
$ 
and by Lemma \ref{thicklem}, $ \lim\limits_{|v_i|\to 0}\tau(\Sigma_i)=+\infty $.
Then there exists $ \varepsilon=\varepsilon(\gamma,\tau,\nu,r_0,d)>0 $, such that $ \sum\limits_{i=1}^b\frac{\tau(\Sigma_i)}{\tau(\Sigma_i)+1}\geq 1 $ and 
\begin{equation*} 
    \left\lbrace\begin{aligned}
        &\Gamma(\Sigma_j)\leq \diam(\Sigma_i),\ \ \forall 1\leq j<i\leq b,\\
        &\Gamma(\Sigma_i)\leq \diam(\Sigma_1)+\cdots+\diam(\Sigma_{i-1}),\ \ \forall 2\le i\le b,
    \end{aligned}\right. 
\end{equation*} 
 if $ 0<|v_i|<\varepsilon $ for $ i=1,\cdots,b $, then $ \hat{\Sigma}=\Sigma_1+\Sigma_2+\cdots+\Sigma_b $ is an interval.

\section{Acknowledgments}
We would like to thank Long Li, Qi Zhou and Hongyu Cheng for useful discussions, and Jiangong You for reading the earlier versions of the paper. This work is partially supported by National Key R\&D Program of China (2020YFA0713300) and Nankai Zhide Foundation.
\appendix
\renewcommand{\appendixname}{Appendix~\Alph{section}}
\section{Proof of Lemma \ref{lem3.1}}\label{appendixA}
 Here we prove Lemma \ref{lem3.1} by the following quantitative Implicit Function Theorem:
\begin{theorem}[\cite{berti2006forced}]\label{thm6.1}
	Let $ X,Y,Z $ be Banach space, $ U\subset X $ and $ V\subset Y $ are neighborhoods of $ x_0 $ and $ y_0 $ respectively. Fix $ s,\delta>0 $ and define $ X_s:=\{x\in X ;\|x-x_0\|_X<s\} $ and $ Y_\delta:=\{y\in Y ;\|y-y_0\|_Y<\delta\} $. Let $ \Psi\in C^{1}(U\times V,Z) $. Suppose that $ \Psi(x_0,y_0)=0 $, and that $ D_y\Psi(x_0,y_0)\in\mathcal{L}(Y,Z) $ is invertible. If
	\[ \sup_{\overline{X_s}}\|\Psi(x,y_0)\|_Z\leq\frac{\delta}{2\|D_y\Psi(x_0,y_0)^{-1}\|_{\mathcal{L}(Z,Y)}}, \]\[ \sup_{\overline{X_s}\times\overline{Y_\delta}}\|\mathrm{Id}_Y-D_y\Psi(x_0,y_0)^{-1}D_y\Psi(x,y)\|_{\mathcal{L}(Y,Y)}\leq\frac{1}{2}, \]
	then there exists $ y\in C^{1}(X_s, \overline{Y_\delta}) $such that $ \Psi(x,y(x))=0. $
\end{theorem}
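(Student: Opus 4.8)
The plan is to recast $\Psi(x,y)=0$ as a fixed-point problem and apply the Banach contraction principle with parameters. Write $L:=D_y\Psi(x_0,y_0)\in\mathcal{L}(Y,Z)$, which is invertible by hypothesis, and for each $x\in\overline{X_s}$ define
$$T_x\colon\overline{Y_\delta}\to Y,\qquad T_x(y):=y-L^{-1}\Psi(x,y).$$
A point $y\in\overline{Y_\delta}$ solves $\Psi(x,y)=0$ if and only if $T_x(y)=y$, so it suffices to produce a fixed point of $T_x$ with good dependence on $x$. Note that $\overline{Y_\delta}=\{y:\|y-y_0\|_Y\le\delta\}$ is a closed, hence complete, convex subset of $Y$.

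First I would verify that $T_x$ maps $\overline{Y_\delta}$ into itself. For $y\in\overline{Y_\delta}$ the segment $y_0+t(y-y_0)$, $t\in[0,1]$, lies in $\overline{Y_\delta}$ by convexity, so the fundamental theorem of calculus gives $\Psi(x,y)-\Psi(x,y_0)=\int_0^1 D_y\Psi(x,y_0+t(y-y_0))(y-y_0)\,\dif t$, whence
$$T_x(y)-y_0=-L^{-1}\Psi(x,y_0)+\int_0^1\bigl(\mathrm{Id}_Y-L^{-1}D_y\Psi(x,y_0+t(y-y_0))\bigr)(y-y_0)\,\dif t.$$
Taking norms, the first hypothesis bounds $\|L^{-1}\Psi(x,y_0)\|_Y\le\delta/2$ and the second bounds the integrand by $\tfrac12\|y-y_0\|_Y\le\delta/2$, so $\|T_x(y)-y_0\|_Y\le\delta$, i.e. $T_x(y)\in\overline{Y_\delta}$. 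A parallel computation, writing $T_x(y_1)-T_x(y_2)=\int_0^1\bigl(\mathrm{Id}_Y-L^{-1}D_y\Psi(x,y_2+t(y_1-y_2))\bigr)(y_1-y_2)\,\dif t$ and again invoking the second hypothesis, gives $\|T_x(y_1)-T_x(y_2)\|_Y\le\tfrac12\|y_1-y_2\|_Y$. Thus $T_x$ is a $\tfrac12$-contraction of the complete space $\overline{Y_\delta}$, and the Banach fixed-point theorem yields a unique $y(x)\in\overline{Y_\delta}$ with $\Psi(x,y(x))=0$.

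It remains to show $x\mapsto y(x)$ is $C^1$ on $X_s$. Continuity follows from the uniform contraction bound: from $y(x_i)=T_{x_i}(y(x_i))$ one gets $\|y(x_1)-y(x_2)\|_Y\le\tfrac12\|y(x_1)-y(x_2)\|_Y+\|T_{x_1}(y(x_2))-T_{x_2}(y(x_2))\|_Y$, so $\|y(x_1)-y(x_2)\|_Y\le 2\|L^{-1}\|\,\|\Psi(x_1,y(x_2))-\Psi(x_2,y(x_2))\|_Z\to0$ as $x_1\to x_2$, since $\Psi\in C^1$. For differentiability I would first observe that the second hypothesis forces $\|\mathrm{Id}_Y-L^{-1}D_y\Psi(x,y)\|\le\tfrac12$ on all of $\overline{X_s}\times\overline{Y_\delta}$, so $L^{-1}D_y\Psi(x,y)$ is invertible by a Neumann series and hence $D_y\Psi(x,y)$ is invertible with $\|D_y\Psi(x,y)^{-1}\|\le2\|L^{-1}\|$ throughout. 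One may then either quote the $C^1$ form of the uniform contraction principle, or argue directly: the candidate derivative is $\Lambda(x):=-D_y\Psi(x,y(x))^{-1}D_x\Psi(x,y(x))$, and expanding $0=\Psi(x+h,y(x+h))-\Psi(x,y(x))$ to first order in $(h,\,y(x+h)-y(x))$, using the $C^1$-smoothness of $\Psi$ together with the already-known continuity of $y(\cdot)$, one checks $\|y(x+h)-y(x)-\Lambda(x)h\|_Y=o(\|h\|_X)$; continuity of $\Lambda$ then follows from continuity of $y(\cdot)$ and of $(x,y)\mapsto D_y\Psi(x,y)^{-1}D_x\Psi(x,y)$.

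The main obstacle I anticipate is this last regularity step. The existence and uniqueness of the fixed point is a routine contraction argument built on the two hypotheses, but upgrading the parameter dependence from Lipschitz continuity to $C^1$ requires the invertibility of $D_y\Psi(x,y)$ \emph{off} the base point — supplied by the Neumann series above — followed by a careful difference-quotient estimate, which is the standard but slightly technical heart of the quantitative implicit function theorem.
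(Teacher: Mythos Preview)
The paper does not actually prove this theorem: it is quoted from \cite{berti2006forced} as a known quantitative implicit function theorem and used as a black box in Appendix~A to prove Lemma~\ref{lem3.1}. Your contraction-mapping argument is the standard proof of such a result and is correct as written; there is simply nothing in the paper to compare it against.
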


    We construct the nonlinear functional
    \[ 
        \Psi:G_r^\nu(\mathbb{T},\mathrm{sl(2,\R)})\times G_{r}^{\nu\,(nre)}(\eta) \to G_{r}^{\nu\,(nre)}(\eta) 
    \]
    by
    \begin{equation*}
        \Psi(f,Y)=\mathbb{P}_{nre}[\ln \left( e^{-A^{-1}Y(\theta+\alpha)A}e^{f(\theta)}e^{Y(\theta)}\right) ],
    \end{equation*}
    where $ \ln(\cdot) $ is the principal logarithm of a matrix.
        One can easily check that\[ \Psi(0,0)=0,\ \|\Psi(f,0)\|\leq|f|_{r}. \]
        
        By the definition of Fr\'{e}chet derivative, We only need to consider the linear terms of $ \Psi(f,Y+Y')-\Psi(f,Y) $, we have
        \begin{align*}
        \Psi(f,Y+Y'&)-\Psi(f,Y)=\mathbb{P}_{nre}\left[ \ln \left( e^{-A^{-1}(Y(\theta+\alpha)+Y'(\theta+\alpha))A}e^{f(\theta)}e^{Y(\theta)+Y'(\theta+\alpha)}\right)\right.\\
        &\quad-\ln \left.\left( e^{-A^{-1}Y(\theta+\alpha)A}e^{f(\theta)}e^{Y(\theta)}\right)\right]\\
        &=\mathbb{P}_{nre}\left\{\ln \left( e^{-A^{-1}(Y(\theta+\alpha)+Y'(\theta+\alpha))A}e^{f(\theta)}e^{Y(\theta)+Y'(\theta)}\right)\right.\\
        &\quad-\ln \left( e^{-A^{-1}(Y(\theta+\alpha)+Y'(\theta+\alpha))A}e^{f(\theta)}e^{Y(\theta)}\right)\\
        &\quad+\ln \left( e^{-A^{-1}(Y(\theta+\alpha)+Y'(\theta+\alpha))A}e^{f(\theta)}e^{Y(\theta)}\right)\\
        &\quad-\ln \left.\left( e^{-A^{-1}Y(\theta+\alpha)A}e^{f(\theta)}e^{Y(\theta)}\right)\right\} .
        \end{align*}
        By the Baker-Campbell-Hausdorff (BCH) Formula
        \begin{equation} \label{BakerF}
            e^{X}e^Y= e^{X+Y+\frac{1}{2}[X,Y]+\frac{1}{12}([X,[X,Y]]+[Y,[Y,X]])+\cdots}, 
        \end{equation} 
        a direct calculation shows that	
        \begin{align*}
        D_y\Psi(f,Y)(Y')&=\mathbb{P}_{nre}\{-A^{-1}Y'(\theta+\alpha)A+Y'(\theta)+E+F\},
        \end{align*}where $E$ is a sum of terms of at least 2 orders in $-A^{-1}Y(\theta+\alpha)A$, $f$, $Y'$, and $F$ is a sum of terms of at least 2 orders in $-A^{-1}Y'(\theta+\alpha)A$, $f$, $Y$, but both $E$ and $F$ are only 1 order in $Y'$.
        
        Let $ f=0,Y=0 $, thus $E, F$ equal zero, then\[ D_y\Psi(0,0)(Y')=\mathbb{P}_{nre}\{-A^{-1}Y'(\theta+\alpha)A+Y'(\theta)\}. \]
        By a direct computation, which is in analogy with Lemma 1 of \cite{eliasson}, we have\[ \|D_y\Psi(0,0)(Y')\|=|-A^{-1}Y'(\theta+\alpha)A+Y'(\theta)|_{r}\geq\tilde{\eta}|Y'|_{r}, \] where $$\tilde{\eta}:=\frac{\eta^3}{\eta^2+3\|A\|^2\eta+2\|A\|^4}.$$
        So we have \[ \|D_y\Psi(0,0)^{-1}\|\leq \tilde{\eta}^{-1}. \]
         Assume that $ \eta\geq \|A\|^2\epsilon^\frac{1}{9} $, then we have
        \begin{align*}
        \tilde{\eta}&=\frac{\eta^3}{\eta^2+3\|A\|^2\eta+2\|A\|^4}\\
        &\geq\frac{\|A\|^2}{3\left(\frac{\|A\|^2}{\eta}+(\frac{\|A\|^2}{\eta})^2+(\frac{\|A\|^2}{\eta})^3\right)}\\
        &\geq \frac{1}{9}\|A\|^2\epsilon^{\frac{1}{3}}.
        \end{align*}
        Set $ s=\epsilon,\delta=\epsilon^{\frac{1}{2}} $, we have\[  2\|D_y\Psi(0,0)^{-1}\|\cdot\sup_{\overline{X_s}}\|\Psi(x,0)\| \leq 2\tilde{\eta}^{-1}\epsilon\leq\delta. \]
        On the other hand,
        \begin{align*}\label{mistakes}
        \|D_y\Psi(0,0)(Y')-D_y\Psi(f,Y)(Y')\|&=\|\mathbb{P}_{nre}\{E+F\}\|\\
        &\leq \left( 8\|A\|^2|Y|_{r}+8\|A\|^2|Y|_{r}\right) |Y'|_{r}\\
        &\leq 16\|A\|^2\epsilon^{\frac{1}{2}}|Y'|_{r},
        \end{align*}
        so if $\epsilon$ sufficiently small, we have
        \begin{align*}
        \|\mathrm{Id}_Y-D_y\Psi(0,0)^{-1}D_y\Psi(x,y)\|&\leq\|D_y\Psi(0,0)^{-1}\|\|D_y\Psi(0,0)-D_y\Psi(x,y)\|\\
        &\leq \tilde{\eta}^{-1}\times16\|A\|^2\epsilon^{\frac{1}{2}}\leq\frac{1}{2},
        \end{align*}
        By Theorem \ref{thm6.1}, for $ |f|_{r}\leq\epsilon $, and $ \eta\geq \|A\|^2\epsilon^{\frac{1}{9}} $, there exists $ |Y|_{r}\leq\epsilon^{\frac{1}{2}} $ such that $ \Psi(f,Y)=0 $, i.e.\[ \ln \left( e^{-A^{-1}Y(\theta+\alpha)A}e^{f(\theta)}e^{Y(\theta)}\right)=f^{(re)}(\theta), \]which is equivalent to \[ e^{-Y(\theta+\alpha)}(Ae^{f(\theta)})e^{Y(\theta)}=Ae^{f^{(re)}(\theta)}. \]It is easy to check $ |f^{(re)}|_{r}\leq 2\epsilon $, which complete the proof of Lemma \ref{lem3.1}.
        \section{Proof of Lemma \ref{lem6.1}}\label{appendixB}
        Let $G=-\delta B^{-1}P$, we have $\mathrm{tr}(B^{-1}P)=0$, hence $G\in G^\nu_{R}(\mathbb{T}^d,\mathrm{sl}(2,\R))$. Then we can construct $Y\in G^\nu_{\frac{R}{2}}(\mathbb{T}^d,\mathrm{sl}(2,\R))$ (by identifying the Fourier coefficients) such that
        \[B^{-1}Y(\cdot+\alpha)B-Y=G-[G].\]
        A direct calculation shows that\[|\widehat{Y}(n)|\leq 4\gamma^{-3}|n|^{3\tau}|\widehat{G}(n)|,\]
            thus
            \begin{align*}
            |Y|_{\frac{R}{2}}&=\sum_{n\in\mathbb{Z}^d}|\widehat{Y}(n)|e^{\frac{R}{2}(2\pi|n|)^\nu}\\
            &\leq \sum_{n\in\mathbb{Z}^d}4\gamma^{-3}|n|^{3\tau}e^{-\frac{R}{2}(2\pi|n|)^\nu}|\widehat{G}(n)|e^{ R(2\pi|n|)^\nu}\leq D_R\delta|P|_R.
            \end{align*}

        Then for $\widetilde{Z}:=e^Y$, we have
        \[\widetilde{Z}(\cdot+\alpha)^{-1}(B-\delta P(\cdot))\widetilde{Z}(\cdot)=(B-\delta[P])+\widetilde{P}(\cdot),\] where
        \[\widetilde{P}(\cdot):=\sum_{m+n\geq 2}\frac{1}{m!}(-Y(\cdot+\alpha))^m B\frac{1}{n!}
        Y(\cdot)^n-\delta\sum_{m+n\geq 1}\frac{1}{m!}(-Y(\cdot+\alpha))^m P(\cdot)\frac{1}{n!}Y(\cdot)^n. \]
        Obviously, $|\widetilde{ Z}-\mathrm{Id}|_{\frac{R}{2}}\leq 2|Y|_{\frac{R}{2}}<1$. Since $\sum_{m+n=k}\frac{k!}{m!n!}=2^k$, and $|G|_R\leq \delta|P|_R$, we get 
        \[|\widetilde{P}(\cdot)|_{\frac{R}{2}}\leq D_R^2\delta^2|P|_R^2+4D_R\delta^2|P|_R^2\leq (2+D_R)^2\delta^2|P|_R^2. \]
        By a direct calculation, we can see that 
        \[B-\delta[P]=I+(b_0-\delta b_1)+\frac{1}{2}(b_0-\delta b_1)^2-\delta^2 \frac{1}{2} b_1^2.\]
        Let $P_1:=-\frac{1}{2} b_1^2+\delta^{-2}(-\sum_{k\geq 3}\frac{1}{k!}(b_0-\delta b_1)^k+\widetilde{P})$, note that $b_0$ is nilpotent, a direct computation shows (\ref{73}), we finish the proof of Lemma \ref{lem6.1}.

\bibliographystyle{spmpsci}
\bibliography{ultra_referrence}

\begin{thebibliography}{10}
\providecommand{\url}[1]{{#1}}
\providecommand{\urlprefix}{URL }
\expandafter\ifx\csname urlstyle\endcsname\relax
  \providecommand{\doi}[1]{DOI~\discretionary{}{}{}#1}\else
  \providecommand{\doi}{DOI~\discretionary{}{}{}\begingroup
  \urlstyle{rm}\Url}\fi

\bibitem{amor}
Amor, S.H.: H{\"o}lder continuity of the rotation number for quasi-periodic
  co-cycles in $\mathrm{SL} (2,\mathbb{R})$.
\newblock Commun. Math. Phys. \textbf{287}(2), 565--588 (2009)

\bibitem{astels2000cantor}
Astels, S.: Cantor sets and numbers with restricted partial quotients.
\newblock Trans. Amer. Math. Soc. \textbf{352}(1), 133--170 (2000)

\bibitem{krikorian2011kam}
Avila, A., Fayad, B., Krikorian, R.: A {KAM} scheme for $
  \mathrm{SL}(2,\mathbb{R}) $ cocycles with {Liouvillean} frequencies.
\newblock Geom. Funct. Anal. \textbf{21}, 1001--1019 (2011)

\bibitem{avila2009ten}
Avila, A., Jitomirskaya, S.: The ten martini problem.
\newblock Ann. of Math. \textbf{170}, 303--342 (2009)

\bibitem{avila2016dry}
Avila, A., You, J., Zhou, Q.: Dry ten {Martini} problem in the non-critical
  case.
\newblock preprint  (2016)

\bibitem{avila2017sharp}
Avila, A., You, J., Zhou, Q.: Sharp phase transitions for the almost {Mathieu}
  operator.
\newblock Duke Math. J. \textbf{166}, 2697--2718 (2017)

\bibitem{berti2006forced}
Berti, M., Biasco, L.: Forced vibrations of wave equations with non-monotone
  nonlinearities.
\newblock Ann. I. H. Poincar{\'e}--AN \textbf{23}, 439--474 (2006)

\bibitem{bourgain2002spectrum}
Bourgain, J.: On the spectrum of lattice {S}chr{\"o}dinger operators with
  deterministic potential.
\newblock J. Anal. Math. \textbf{87}(1), 37--75 (2002)

\bibitem{bourgain2002absolutely}
Bourgain, J., Jitomirskaya, S.: Absolutely continuous spectrum for {1D}
  quasiperiodic operators.
\newblock Invent. Math. \textbf{148}(3), 453--463 (2002)

\bibitem{Cai2017SharpHC}
Cai, A., Chavaudret, C., You, J., Zhou, Q.: Sharp h{\"o}lder continuity of the
  {Lyapunov} exponent of finitely differentiable quasi-periodic cocycles.
\newblock Math. Z. \textbf{291}, 931--958 (2017)

\bibitem{cai2021polynomial}
Cai, A., Wang, X.: Polynomial decay of the gap length for {$C^k$}
  quasi-periodic {Schr{\"o}dinger} operators and spectral application.
\newblock J. Func. Anal. \textbf{281}(3), 109035 (2021)

\bibitem{chavaudret2013strong}
Chavaudret, C.: Strong almost reducibility for analytic and {Gevrey}
  quasi-periodic cocycles.
\newblock Bull. Soc. Math. France \textbf{141}(1), 47--106 (2013)

\bibitem{damanik2014inverse}
Damanik, D., Goldstein, M.: On the inverse spectral problem for the
  quasi-periodic {S}chr{\"o}dinger equation.
\newblock Publ. Math. Inst. Hautes Études Sci. \textbf{119}, 217--401 (2014)

\bibitem{damanik2011spectral}
Damanik, D., Gorodetski, A.: Spectral and quantum dynamical properties of the
  weakly coupled fibonacci hamiltonian.
\newblock Commun. Math. Phys. \textbf{305}(1), 221--277 (2011)

\bibitem{dinaburg1975one}
Dinaburg, E., Sinai, Y.G.: The one-dimensional {S}chr{\"o}dinger equation with
  a quasiperiodic potential.
\newblock Funct. Anal. Appl. \textbf{9}(4), 279--289 (1975)

\bibitem{eliasson}
Eliasson, L.: Floquet solutions for the 1-dimensional quasi-periodic
  {Schr{\"o}dinger} equation.
\newblock Commun. Math. Phys. \textbf{146}(3), 447--482 (1992)

\bibitem{ge2020arithmetic}
Ge, L., You, J.: Arithmetic version of {A}nderson localization via
  reducibility.
\newblock Geom. Funct. Anal. \textbf{30}(5), 1370--1401 (2020)

\bibitem{ge2019exponential}
Ge, L., You, J., Zhou, Q.: Exponential dynamical localization: Criterion and
  applications.
\newblock preprint arXiv:1901.04258  (2019)

\bibitem{goldstein2019spectrum}
Goldstein, M., Schlag, W., Voda, M.: On the spectrum of multi-frequency
  quasiperiodic {S}chr{\"o}dinger operators with large coupling.
\newblock Invent. Math. \textbf{217}(2), 603--701 (2019)

\bibitem{han2018discrete}
Han, R., Jitomirskaya, S.: Discrete {Bethe}--{S}ommerfeld conjecture.
\newblock Commun. Math. Phys. \textbf{361}(1), 205--216 (2018)

\bibitem{hou}
Hou, X., You, J.: {Almost reducibility and non-perturbative reducibility of
  quasi-periodic linear systems}.
\newblock Invent. Math. \textbf{190}(1), 209--260 (2012)

\bibitem{gaplabel}
Johnson, R., Moser, J.: The rotation number for almost periodic potentials.
\newblock Commun. Math. Phys. \textbf{84}(3), 403--438 (1982)

\bibitem{krikorian2004}
Krikorian, R.: Reducibility, differentiable rigidity and {Lyapunov} exponents
  for quasi-periodic cocycles on {${\bf \mathbb{T}} \times SL (2,\mathbb{R})
  $}.
\newblock preprint arXiv:math/0402333  (2004)

\bibitem{leguil2017asymptotics}
Leguil, M., You, J., Zhao, Z., Zhou, Q.: {Asymptotics of spectral gaps of
  quasi-periodic Schr\"odinger operators}.
\newblock preprint arXiv:1712.04700  (2017)

\bibitem{liu2019upper}
Liu, W., Shi, Y.: Upper bounds on the spectral gaps of quasi-periodic
  {S}chr{\"o}dinger operators with {L}iouville frequencies.
\newblock J. Spectr. Theory \textbf{9}(4), 1223--1248 (2019)

\bibitem{liu2015spectral}
Liu, W., Yuan, X.: Spectral gaps of almost {Mathieu} operators in the
  exponential regime.
\newblock J. Fractal Geom. \textbf{2}(1), 1--51 (2015)

\bibitem{moser-p}
Moser, J., P{\"o}schel, J.: An extension of a result by {Dinaburg} and {Sinai}
  on quasi-periodic potentials.
\newblock Commun. Math. Helv. \textbf{59}(1), 39--85 (1984)

\bibitem{newhouse1979abundance}
Newhouse, S.E.: The abundance of wild hyperbolic sets and non-smooth stable
  sets for diffeomorphisms.
\newblock Publ. Math. Inst. Hautes Études Sci. \textbf{50}, 101--151 (1979)

\bibitem{palis1995hyperbolicity}
Palis, J., Takens, F.: Hyperbolicity and Sensitive Chaotic Dynamics at
  Homoclinic Bifurcations.
\newblock Cambridge University Press (1995)

\bibitem{parnovski2008bethe}
Parnovski, L.: Bethe--{S}ommerfeld conjecture.
\newblock In: Annales Henri Poincar{\'e}, vol.~9, pp. 457--508. Springer (2008)

\bibitem{puig}
Puig, J.: A nonperturbative {E}liasson's reducibility theorem.
\newblock Nonlinearity \textbf{19}(2), 355 (2005)

\bibitem{schmidt1996ergodic}
Schmidt, K., Pollicott, M.: Ergodic theory of $ \mathbb{Z}^d $ actions:
  proceedings of the {Warwick Symposium}, 1993-94.
\newblock Cambridge University Press (1996)

\bibitem{shi2019spectral}
Shi, Y.: Spectral theory of multi-frequency quasi-periodic operator with
  {Gevrey} type perturbation.
\newblock preprint arXiv:1909.08772  (2019)

\bibitem{simon1982almost}
Simon, B.: Almost periodic {S}chr{\"o}dinger operators: a review.
\newblock Advances in Applied Mathematics \textbf{3}(4), 463--490 (1982)

\bibitem{simon2000schrodinger}
Simon, B.: Schr{\"o}dinger operators in the twentieth century.
\newblock J. Math. Phys. \textbf{41}(6), 3523--3555 (2000)

\bibitem{takase2021spectra}
Takase, A.: On the spectra of separable {2D} almost {Mathieu} operators.
\newblock In: Annales Henri Poincar{\'e}, pp. 1--15. Springer (2021)

\bibitem{wang2019genericity}
Wang, J., Zhou, Q., J{\"a}ger, T.: Genericity of mode-locking for
  quasiperiodically forced circle maps.
\newblock Adv. Math. \textbf{348}, 353--377 (2019)

\bibitem{you2018quantitative}
You, J.: Quantitative almost reducibility and its applications.
\newblock In: Proceedings of the International Congress of Mathematicians 2018
  (ICM 2018) (In 4 Volumes), pp. 2113--2135. World Scientific (2018)

\end{thebibliography}
\end{document}